\NewDocumentCommand{\ceil}{s O{} m}{%
  \IfBooleanTF{#1} 
    {\left\lceil#3\right\rceil} 
    {#2\lceil#3#2\rceil} 
}
\newtheorem{theorem}{Theorem}[section]
\newtheorem{lemma}[theorem]{Lemma}
\newtheorem{corollary}[theorem]{Corollary}
\theoremstyle{remark}
\newtheorem{remark}[theorem]{\bf{Remark}}
\theoremstyle{definition}
\newtheorem{assumption}[theorem]{Assumption}
\newtheorem{definition}[theorem]{Definition}
\newcommand\cbrk{\text{$]$\kern-.15em$]$}}
\newcommand\opar{\text{\,\raise.2ex\hbox{${\scriptstyle
|}$}\kern-.34em$($}}
\newcommand\cpar{\text{$)$\kern-.34em\raise.2ex\hbox{${\scriptstyle |}$}}\,}
\newcommand{\aint}{-\hspace{-0.38cm}\int}
\newcommand{\aaint}{-\hspace{-0.31cm}\int}
\newcommand\bC{\mathbb{C}}
\newcommand\bE{\mathbb{E}}
\newcommand\bH{\mathbb{H}}
\newcommand\bL{\mathbb{L}}
\newcommand\bN{\mathbb{N}}
\newcommand\bP{\mathbb{P}}
\newcommand\bQ{\mathbb{Q}}
\newcommand\bR{\mathbb{R}}
\newcommand\bZ{\mathbb{Z}}
\newcommand\cF{\mathcal{F}}
\newcommand\cG{\mathcal{G}}
\newcommand\cK{\mathcal{K}}
\newcommand\cL{\mathcal{L}}
\newcommand\cM{\mathcal{M}}
\newcommand\cO{\mathcal{O}}
\newcommand\cS{\mathcal{S}}
\newcommand{\mysection}[1]{\section{#1}
\setcounter{equation}{0}}
\newcommand{\Ccinf}{C_{c}^{\infty}}
\newcommand{\R}{\mathbb{R}}
\begin{document}

\title[PDE with space-time fractional operators]
{An $L_q(L_p)$-theory for diffusion equations with space-time nonlocal operators}

\author{Kyeong-Hun Kim}
\address{Department of Mathematics, Korea University, 145 Anam-ro, Seongbuk-gu, Seoul,
02841, Republic of Korea}
\email{kyeonghun@korea.ac.kr}

\thanks{The authors were  supported by the National Research Foundation of Korea(NRF) grant funded by the Korea government(MSIT) (No. NRF-2020R1A2C1A01003354)} 

\author{Daehan Park}
\address{Department of Mathematics, Korea University, 145 Anam-ro, Seongbuk-gu, Seoul,
02841, Republic of Korea} \email{daehanpark@korea.ac.kr}

\author{Junhee Ryu}
\address{Department of Mathematics, Korea University, 145 Anam-ro, Seongbuk-gu, Seoul,
02841, Republic of Korea} \email{junhryu@korea.ac.kr}

\subjclass[2010]{35B65, 35S10, 26A33, 47G20}

\keywords{Space-time nonlocal equations, $L_{q}(L_{p})$-theory, Caputo fractional derivative,  Integro-differential operator}

\begin{abstract}

We present an $L_q(L_{p})$-theory for the  equation
$$
\partial_{t}^{\alpha}u=\phi(\Delta) u +f, \quad t>0,\, x\in \mathbb{R}^d \quad\, ;\, u(0,\cdot)=u_0.
$$
Here $p,q>1$,  $\alpha\in (0,1)$, $\partial_{t}^{\alpha}$ is the Caputo fractional derivative of order $\alpha$, and $\phi$ is a Bernstein function satisfying the following: $\exists \delta_0\in (0,1]$ and $c>0$  such that 
\begin{equation}
  \label{eqn 8.17.1}
c \left(\frac{R}{r}\right)^{\delta_0}\leq \frac{\phi(R)}{\phi(r)}, \qquad  0<r<R<\infty.
\end{equation}
We prove uniqueness and existence results in Sobolev spaces, and  obtain  maximal regularity results of the solution. In particular, we prove
\begin{align*}
\| |\partial^{\alpha}_t u|+|u|+|\phi(\Delta)u|\|_{L_q([0,T];L_p)}\leq N(\|f\|_{L_q([0,T];L_p)}+ \|u_0\|_{B_{p,q}^{\phi,2-2/ \alpha q}}),
\end{align*}
where $B_{p,q}^{\phi,2-2/\alpha q}$ is a modified Besov space on $\mathbb{R}^d$  related to $\phi$.

Our approach is based on BMO estimate  for $p=q$ and vector-valued Calder\'on-Zygmund theorem for $p\neq q$. The Littlewood-Paley theory is also used to treat the non-zero initial data problem. Our proofs rely on the derivative estimates of the fundamental solution, which are  obtained in this article based on   the probability theory.
\end{abstract}

\maketitle

\mysection{Introduction}

Many types of diffusion equations have  been used to describe diverse phenomena in various fields including mathematics, engineering, biology, hydrology, finance, and chemistry.
The classical heat equation $\partial_t u = \Delta u$ describes the heat propagation in homogeneous media. When $\alpha\in(0,1)$, the equation $\partial_t^\alpha u = \Delta u$ describes the anomalous diffusion exhibiting subdiffusive behavior caused by particle sticking and trapping effects
 (e.g. \cite{metzler1999anomalous,metzler2000random}). 
 On the other hand, the spatial nonlocal operator $\phi(\Delta)$ describes long range jumps of particles,  diffusions  on fractal structures, and  long time behavior of particles moving in space with quenched and disordered force field (e.g. \cite{bouchaud1990,fogedby1994}). 

The space-time fractional diffusion equation  can be used to describe the combined  phenomena, for instance, jump diffusions with a higher peak 
and heaver tails  (see e.g. \cite{chen2012space,gorenflo2003CTRW}). The space-time fractional equation is also related to the scaling limit of continuous time random walk  (see \cite{compte1996, gorenflo2003CTRW, meerschaert2002stochastic}).

In this article we study  the space-time fractional equation
\begin{equation} \label{main equation intro}
\partial_t^\alpha u=\phi(\Delta)u + f, \quad t>0\,;\quad u(0,\cdot)=u_0.
\end{equation}
Here, $\alpha\in(0,1), p,q>1$, $\partial_t^\alpha$ is the Caputo fractional derivative of order $\alpha$, and $\phi$ is a Bernstein function satisfying $\phi(0+)=0$, that is, $\phi:(0,\infty) \to (0,\infty)$ such that 
$$
(-1)^n \phi^{(n)}(\lambda)\leq 0, \quad \forall \lambda>0, \, n=1,2,\cdots.
$$
The operator $\phi(\Delta)$ is defined by 
$$
\phi(\Delta)u:=-\phi(-\Delta)u:=-\cF^{-1}_d[\phi(|\xi|^2)\cF_d (u)(\xi)], \quad u\in C^{\infty}_c.
$$
For instance, if $\phi(\lambda)=\lambda^{\nu/2}$, $\nu\in (0,2)$,  then $\phi(\Delta)=\Delta^{\nu}$ becomes the fractional Laplacian.  It turns out  that $\phi(\Delta)$ is a type of  integro-differential operator, and the class of $\phi(\Delta)$ is characterized by  the infinitesimal generators of  subordinate Brownian motions. See Section 2 for details.

Probabilistic representation of solution to equation \eqref{main equation intro} has been introduced  e.g.  in \cite{ chen2017time, chen2020time, chen2012space, meerschaert2002stochastic}. Actually the transition density of  subordinate Brownian motion delayed by an inverse subordinator becomes the fundamental solution, and this transition density  and its appropriate time-fractional derivative appear in the solution representation. See Sections 4 and 5.

The main purpose of this article is to present a Sobolev-regularity theory of equation  \eqref{main equation intro}. 
We prove  the uniqueness and existence   in Sobolev spaces  and  obtain the maximal $L_q(L_p)$-regularity of solutions. In particular, we prove  for any $p,q>1$, 
\begin{equation}
    \label{eqn 8.4.5}
\| |\partial^{\alpha}_t u|+|u|+|\phi(\Delta)u|\|_{L_q([0,T];L_p)}\leq N\|f\|_{L_q([0,T];L_p)}+N  \|u_0\|_{B_{p,q}^{\phi,2-2/ \alpha q}},
\end{equation}
where $B_{p,q}^{\phi,2-2/\alpha q}$ is a  Besov space on $\bR^d$ related to $\phi$.  Moreover, we obtain the maximal regularity  of higher order  derivatives as well as negative order derivatives of  solutions. 

 Our proof for \eqref{eqn 8.4.5} is mainly based on BMO-estimate if $u_0=0$,  and  Littlewood-Paley theory is used  to treat the case $u_0\neq 0$.  Specifically speaking,  we prove that if $u_0=0$ then
 \begin{equation}
 \label{eqn 8.17.3}
 |(\partial^{\alpha}_tu)^{\#}(t,x)|+  |(\phi(\Delta)u)^{\#}(t,x)| \leq N \|f\|_{L_{\infty}}, \quad t>0,\,x\in \bR^d.
 \end{equation}
 Here $(\partial^{\alpha}_tu)^{\#}$ denotes the sharp function of $\partial^{\alpha}_tu$. The BMO estimate and the Marcinkiewicz interpolation theorem lead to \eqref{eqn 8.4.5} for $p=q$, and the case $p\neq q$ is covered based on the vector-valued  Calder\'on-Zygmund theorem.  
 For the implement of  these procedures, we rely on sharp upper bounds of arbitrary  order space-time derivatives of the fundamental solution, which are obtained in Section 3.  Due to the non-integrability of derivatives of the fundamental solution, our proofs of e.g. \eqref{eqn 8.17.3} are much more delicate than the proofs for  PDEs with local operators. Condition \eqref{eqn 8.17.1} is a minimal assumption on $\phi$  such that our derivative estimates of the fundamental solution hold for all $t>0$.  This is essential in this article  because  we are aiming to prove estimates for solutions which are independent of the time intervals where the solutions are defined.
 
Here are some related $L_q(L_p)$-theories for the diffusion equations with either time fractional derivative  or 
spatial integro-differential operators. An $L_q(L_p)$-theory for the time fractional equation 
$$
\partial^{\alpha}_t u =\sum_{i,j=1}^d a^{ij}(t,x)u_{x^ix^j}+f
$$
was  introduced in \cite{clement1992global, Pr1991} when $a^{ij}=\delta^{ij}$. The result of \cite{clement1992global, Pr1991} is based on semigroup theory, and similar approach is used in \cite{zacher2005maximal} to treat the equation with uniformly continuous coefficients.  Recently, the continuity condition of \cite{zacher2005maximal} is significantly relaxed  in \cite{dong2019lp,kim17timefractionalpde}. For instance, if $p=q$ then \cite{dong2019lp} only requires that the coefficients are only measurable in $t$ and have small mean oscillation in  $x$. The approach in \cite{dong2019lp} is based on the level set arguments. 
Regarding the equations with spatial integro-differential  operators, an $L_{p}$-theory of the diffusion equation the type
$$
 u_t=\int_{\R^{d}}\left(u(x+y)-u(x)-\chi(y)y \cdot \nabla u(y)\right) J(x, dy) +f
$$
was introduced in \cite{mik1992}.  Here $\chi$ is a certain indicator function and the jump kernel $J(x,dy)$ is  of the type $a(x,y)|y|^{-d-\alpha}$, where $a(x,y)$ is homogeneous of order zero and sufficiently smooth in $y$.  Recently, the condition on $J(x,dy)$ has been generalized and weaken e.g. in
 \cite{kim2019L, kim16,mikulevivcius2019cauchy, mikulevivcius2017p,  zhang2013p}. 
 
This article is organized as follows. In Section 2, we introduce some basic facts on the fractional calculus, integro-differential operator $\phi(\Delta)$, and related function spaces.  We also introduce our main result, Theorem \ref{main theorem},  in Section 2.  In Section 3 we obtain sharp upper bounds of space-time  derivatives of the fundamental solution.  In section 4 we study the zero initial data problem, and non-zero initial data problem is considered in Section 5.  Finally we prove our main result  in Section 6.

We finish the introduction with some notations. We use $``:="$ or $``=:"$ to denote a definition. The symbol $\bN$ denotes the set of positive integers and $\bN_0:=\bN\cup\{0\}$. Also we use $\bZ$ to denote the set of integers.  As usual $\bR^d$ stands for the Euclidean space of points $x=(x^1,\dots,x^d)$. We set
$$
B_r(x):=\{y\in \bR: |x-y|<r\}, \quad \bR_+^{d+1} := \{(t,x)\in\bR^{d+1} : t>0 \}.
$$
For $i=1,\ldots,d$,
multi-indices $\sigma=(\sigma_{1},\ldots,\sigma_{d})$,
 and functions $u(t,x)$ we set
$$
\partial_{x^{i}}u=\frac{\partial u}{\partial x^{i}}=D_{i}u,\quad D^{\sigma}u=D_{1}^{\sigma_1}\cdots D_{d}^{\sigma_d}u,\quad|\sigma|=\sigma_{1}+\cdots+\sigma_{d}.
$$
We also use the notation $D_{x}^{m}$ for arbitrary partial derivatives of
order $m$ with respect to $x$.
For an open set $\cO$ in $\bR^{d}$ or $\bR^{d+1}$, $C_c^\infty(\cO)$ denotes the set of infinitely differentiable functions with compact support in $\cO$. By 
$\cS=\cS(\bR^d)$ we denote the  class of Schwartz functions on $\bR^d$.
For $p> 1$, by $L_{p}$ we denote the set
of complex-valued Lebesgue measurable functions $u$ on $\R^{d}$ satisfying
\[
\left\Vert u\right\Vert _{L_{p}}:=\left(\int_{\R^{d}}|u(x)|^{p}dx\right)^{1/p}<\infty.
\]
Generally, for a given measure space $(X,\mathcal{M},\mu)$, $L_{p}(X,\cM,\mu;F)$
denotes the space of all $F$-valued $\mathcal{M}^{\mu}$-measurable functions
$u$ so that
\[
\left\Vert u\right\Vert _{L_{p}(X,\cM,\mu;F)}:=\left(\int_{X}\left\Vert u(x)\right\Vert _{F}^{p}\mu(dx)\right)^{1/p}<\infty,
\]
where $\mathcal{M}^{\mu}$ denotes the completion of $\cM$ with respect to the measure $\mu$.
If there is no confusion for the given measure and $\sigma$-algebra, we usually omit the measure and the $\sigma$-algebra. 
We denote $a\wedge b := \min\{a,b\}$ and $a\vee b:=\max\{a,b\}$. By $\cF$ and $\cF^{-1}$ we denote the $d$-dimensional Fourier transform and the inverse Fourier transform respectively, i.e.
$$
\cF(f)(\xi):=\hat{f}(\xi):=\int_{\bR^d} e^{-i\xi\cdot x} f(x)dx, \quad \cF^{-1}(f)(\xi):=\frac{1}{(2\pi)^d}\int_{\bR^d} e^{i\xi\cdot x} f(x)dx. 
$$
For any $a,b>0$, we write $a\sim b$ if there is a constant $c>1$ independent of $a,b$ such that $c^{-1}a\leq b\leq ca$. Finally if we write $N=N(\dots)$, this means that the constant $N$ depends only on what are in the parentheses. The constant $N$ can differ from line to line.

\mysection{Main results} \label{Main result sec}

First we introduce some definitions and facts related to the fractional calculus. For $\alpha>0$ and $\varphi\in L_{1}((0,T))$, the Riemann-Liouville fractional integral
of the order $\alpha$ is defined as
$$
I_{t}^{\alpha}\varphi:=\frac{1}{\Gamma(\alpha)}\int_{0}^{t}(t-s)^{\alpha-1}\varphi(s)ds, \quad 0\leq t\leq T.
$$
We also define  $I^0\varphi:=\varphi$.
By Jensen's inequality, for $p\in[1,\infty]$,
\begin{equation}
                     \label{eq:Lp continuity of I}
\left\Vert I_{t}^{\alpha}\varphi\right\Vert _{L_{p}((0,T))}\leq
N(T,\alpha)\left\Vert \varphi\right\Vert _{L_{p}((0,T))}.
\end{equation}
 Using  Fubini's theorem, one can  easily check for any $\alpha,\beta\geq 0$,
 \begin{equation}
                                                          \label{eqn 4.15.3}
I^{\alpha}_tI^{\beta}_t \varphi=I^{\alpha+\beta}_t \varphi, \quad
\text{$(a.e.)$} \,\, t\leq T.
\end{equation}
 Let  $\alpha \in [n-1, n)$,  $n\in \bN$. 
If  $\varphi(t)$ is  $(n-1)$-times  differentiable and $\left(\frac{d}{dt}\right)^{n-1} I_t^{n-\alpha}  \varphi$ is absolutely continuous on $[0,T]$, then
the Riemann-Liouville fractional derivative $D_{t}^{\alpha}$ and the Caputo fractional derivative $\partial_{t}^{\alpha}$ are defined as
\begin{equation}
                          \label{eqn 4.15}
D_{t}^{\alpha}\varphi:=\left(\frac{d}{dt}\right)^{n}\left(I_{t}^{n-\alpha}\varphi\right),
\end{equation}
and
\begin{align}
         \label{eqn 8.30}
\partial_{t}^{\alpha}\varphi= D_{t}^{\alpha} \left(\varphi(t)-\sum_{k=0}^{n-1}\frac{t^{k}}{k!}\varphi^{(k)}(0)\right).
\end{align}
Note that $D^{\alpha}_t\varphi=\partial^{\alpha}_t \varphi$ if $\varphi(0)=\varphi'(0)=\cdots=\varphi^{(n-1)}(0)=0$.
By \eqref{eqn 4.15.3} and \eqref{eqn 4.15},  if  $\alpha,\beta\geq 0$,
\begin{equation*}
                \label{eqn 4.20.1}
D^{\alpha}_tD^{\beta}_t=D^{\alpha+\beta}_t, \quad D^{\alpha}_t I_{t}^{\beta} \varphi=
D_{t}^{\alpha-\beta}\varphi,
\end{equation*}
where $D_t^{a}\varphi:=I_t^{-a}\varphi$ if $a<0$.
Also if 
$\varphi(0)=\varphi^{(1)}(0)=\cdots = \varphi^{(n-1)}(0)=0$ 
then
\begin{equation}
	\label{eqn 07.19.15:09}
I^{\alpha}_{t}\partial^{\alpha}_{t}u=I^{\alpha}_{t}D^{\alpha}_{t}u=u.
\end{equation}

Next, we introduce our assumption on $\phi$ and some informations  on the operator $\phi(\Delta)$.  Recall that a fucntion $\phi: \bR_+ \to \bR_+$ satisfying  $\phi(0+)=0$ is a Bernstein function if there exist a constant $b\geq 0$ and a L\'evy measure $\mu$ (i.e.  $\int_{(0,\infty)} (1\wedge t) \mu(dt)<\infty$) such that
\begin{equation}
	\label{eqn 07.17.16:36}
\phi(\lambda)=b\lambda + \int_{(0,\infty)} (1-e^{-\lambda t})\mu(dt).
\end{equation}
It is known that a function $\phi$ is a Bernstein function if and only if it is a Laplace exponent of a subordinator, that is, there exists a nonnegative real-valued L\'evy process $S_t$ on a probability space $(\Omega, \mathscr{F}, \bP)$ such that 
$$\bE\, e^{-\lambda S_t}:=\int_{\Omega} e^{-\lambda S_t(\omega)} \,\bP(d\omega)=e^{-t\phi(\lambda)}.
$$

For $f\in\cS(\bR^d)$,  we define $\phi(\Delta)f:=-\phi(-\Delta)f$ as 
\begin{equation*}\label{fourierdefphi}
\phi(\Delta)f(x)=\cF^{-1}(-\phi(|\xi|^{2})\cF(f)(\xi))(x).
\end{equation*}
It turns out (see \cite[Theorem 31.5]{sato1999}) that  $\phi(\Delta)$  is an integro-differential operator defined by
\begin{equation*}\label{fourier200408}
\phi(\Delta)f(x)=b\Delta f+ \int_{\bR^d} \left(f(x+y)-f(x)-\nabla f(x)\cdot y \mathbf{1}_{|y|\leq 1}\right) J(y) dy
\end{equation*}
where $J(x)=j(|x|)$ and $j:(0,\infty)\to(0,\infty)$ is given by
$$
j(r)=\int_{(0,\infty)} (4\pi t)^{-d/2} e^{-r^2/(4t)} \mu(dt).
$$
Furthermore, $\phi(\Delta)$ is the infinitesimal generator of the $d$-dimensional subordinate Brownian motion $X_t:=W_{S_{t}}$;
$$
\phi(\Delta)f(x)=\lim_{t \to 0} \frac{ \bE f(x+X_t) -f(x)}{t},
$$
where  $W_t$ is a $d$-dimensional Brownian motion independent of $S_t$. For instance, by taking $\phi(\lambda)=\lambda^{\alpha/2}$, $\alpha\in (0,2)$, we get the fractional Laplacian $\Delta^{\alpha/2}=-(-\Delta)^{\alpha/2}$, which is the infinitesimal generator of a rotationally symmetric $\alpha$-stable process in $\bR^d$.

Using \eqref{eqn 07.17.16:36} one can  check
$$
\phi^{(n)}(\lambda)=b\mathbf{1}_{n=1}-\int_{(0,\infty)} (-t)^{n}e^{-\lambda t}\mu(dt),\quad n\in \bN,
$$
where $\phi^{(n)}$ is the $n$-th derivative of $\phi$.  Theorefore, $\phi'(\lambda)>0$ and
$$
(-1)^n \phi^{(n)}(\lambda)\leq 0, \quad \forall \lambda>0, \, n\in \bN,
$$
and, by the inequality  $t^{n}e^{-t}\leq N(n)(1-e^{-t})$, we also have for any $n\geq 1$,
\begin{align}
	\label{eqn 07.19.14.35}
\lambda^{n}|\phi^{(n)}(\lambda)| &\leq \mathbf{1}_{n=1}b\lambda+ \int_{0}^{\infty}(\lambda t)^{n}e^{-\lambda t}\mu(dt) \nonumber
\\
& \leq \mathbf{1}_{n=1}b\lambda+ N\int_{0}^{\infty}(1-e^{-\lambda t})\mu (dt)  \nonumber
\\
&\leq N(n) \phi(\lambda).
\end{align}

Here is our assumption on $\phi$.

\begin{assumption}
   \label{ass bernstein}
There exist constants $\delta_0\in (0,1]$ and $c>0$  such that
\begin{equation}\label{e:H}
c \left(\frac{R}{r}\right)^{\delta_0}\leq\frac{\phi(R)}{\phi(r)}, \qquad 0<r<R<\infty.
\end{equation}
\end{assumption}

Note that the case $\delta_0=1$ is included, and  \eqref{e:H} is assumed to hold for all $0<r<R<\infty$. In the literature, it is common to impose separate conditions on $\phi$ near zero and infinity.
For instance, in \cite{kim2013parabolic,kim2014global,mimica2016heat}, conditions (H1) and (H2) below are used for the study of  the transition density of subordinate Brownian motion:

\textbf{(H1)}: $\exists$ $c_{1},c_{2}>0$, and $0<\delta_{1}\leq\delta_{2}<1$ such that
\begin{equation*}
c_{1}\lambda^{\delta_{1}}\phi(t)\leq \phi(\lambda t)\leq c_{2}\lambda^{\delta_{2}}\phi(t)\quad \lambda\geq1, \quad t\geq1.
\end{equation*}

\textbf{(H2)}: $\exists$ $c_{3}>0$, and $0<\delta_{3}\leq 1$ such that
\begin{equation*}
\phi(\lambda t)\leq c_{3}\lambda^{\delta_{3}}\phi(t) \quad \lambda\leq1, \quad t\leq1.
\end{equation*}

Inequality (3.27) in \cite{kim2013parabolic}  shows that (H1) and (H2) together give
\begin{equation}
    \label{eqn 7.17.1}
\frac{c_1}{c_3} \left(\frac{R}{r}\right)^{\delta_1 \wedge \delta_3}\leq\frac{\phi(R)}{\phi(r)}, \qquad 0<r<R<\infty.
\end{equation}
Thus our assumption is weaker than (H1) combined with (H2).

\vspace{2mm}

Here are some examples related to our assumption on $\phi$:

	\begin{enumerate}[(1)]
\item Stable subordinators : $\phi(\lambda)=\lambda^\beta, \quad 0<\beta\leq1$.

\item Sum of stable subordinators : $\phi(\lambda)=\lambda^{\beta_1}+\lambda^{\beta_2}, \quad 0<\beta_1,\beta_2\leq1$.

\item Stable with logarithmic correction : $\phi(\lambda)=\lambda^\beta (\log(1+\lambda))^\gamma, \quad \beta\in(0,1), \gamma\in(-\beta,1-\beta)$.

\item Relativistic stable subordinators : $\phi(\lambda)=(\lambda+m^{1/\beta})^\beta-m, \quad \beta\in(0,1), m>0$.

\item Conjugate geometric stable subordinators : $\phi(\lambda)=\frac{\lambda}{\log(1+\lambda^{\beta/2})}, \quad \beta\in(0,2)$.
\end{enumerate}

One can check that Examples ($1$)-($4$) satisfy both (H1) and (H2), and  therefore Assumption  \ref{ass bernstein} is also fulfilled due to  \eqref{eqn 7.17.1}. On the other hand, Example ($5$) satisfies Assumption  \ref{ass bernstein} with $\delta_0=1-\frac{\beta}{2}$, but condition (H1) fails to hold because 
$$
\lim_{\lambda \to \infty} \frac{\phi(\lambda)}{\lambda^{1-\varepsilon}}=\infty, \quad \forall  \, \varepsilon>0.
$$

\vspace{2mm}

Next we introduce Sobolev and Besov spaces related to the operator $\phi(\Delta)$.
For $\gamma\in\bR$, denote 
$$H_p^{\phi,\gamma}:=(1-\phi(\Delta))^{-\gamma/2}L_p.
$$
 That is, $u\in H_p^{\phi,\gamma}$ if
\begin{equation*}
\|u\|_{H_p^{\phi,\gamma}}:=\|(1-\phi(\Delta))^{\gamma/2}u\|_{L_p}:=\|\cF^{-1}\{\left(1+\phi(|\cdot|^2)\right)^{\gamma/2}\cF(u)(\cdot)\}\|_{L_p}<\infty.
\end{equation*}
Note that if $\phi(\lambda)=\lambda$, then $H_p^{\phi,\gamma}$ is the  classical Bessel potential space $H^{\gamma}_{p}$. 

The following lemma gives some basic properties of $H^{\phi,\gamma}_{p}$.

\begin{lemma}\label{H_p^phi,gamma space}

(i) For any $\gamma\in\bR$, $H_p^{\phi,\gamma}$ is a Banach space.

(ii) For any $\mu,\gamma\in\bR$, the map $(1-\phi(\Delta))^{\mu/2}: H_p^{\phi,\gamma}\to H_p^{\phi,\gamma-\mu}$ is an isometry.

(iii) If $\gamma_1\leq\gamma_2$, then $H_p^{\phi,\gamma_2}\subset H_p^{\phi,\gamma_1}$, and there is a constant $N>0$ so that
\begin{equation*}
\|u\|_{H_p^{\phi,\gamma_1}}\leq N\|u\|_{H_p^{\phi,\gamma_2}}.
\end{equation*}

(iv) For any  $\gamma\geq0$,
\begin{equation}\label{eqn 03.25.15:03}
\left(\|u\|_{L_p}+\|\phi(\Delta)^{\gamma/2}u\|_{L_p}\right) \sim  \|u\|_{H_p^{\phi,\gamma}}.
\end{equation}

\end{lemma}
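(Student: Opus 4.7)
The plan is to reduce every statement to the behavior of Fourier multipliers of the form $(1+\phi(|\xi|^2))^{s}$ and $\phi(|\xi|^2)^{s}$, and to treat these via the Mikhlin multiplier theorem, using the scaling estimate \eqref{eqn 07.19.14.35} as the basic tool. Parts (ii) and (i) are essentially formal. For (ii), since $(1-\phi(\Delta))^{s}$ is the Fourier multiplier with symbol $(1+\phi(|\xi|^2))^{s}$ and such symbols multiply under addition of exponents, a direct computation gives
\[
\|(1-\phi(\Delta))^{\mu/2}u\|_{H_p^{\phi,\gamma-\mu}}
=\|(1-\phi(\Delta))^{(\gamma-\mu)/2}(1-\phi(\Delta))^{\mu/2}u\|_{L_p}
=\|u\|_{H_p^{\phi,\gamma}},
\]
and surjectivity is seen by setting $u=(1-\phi(\Delta))^{-\mu/2}v$. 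Taking $\mu=\gamma$ in (ii) shows that $H_p^{\phi,\gamma}$ is linearly isometric to $L_p$, so completeness of $L_p$ immediately transfers and (i) follows.

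For (iii) I would factor $(1-\phi(\Delta))^{\gamma_1/2}=(1-\phi(\Delta))^{(\gamma_1-\gamma_2)/2}(1-\phi(\Delta))^{\gamma_2/2}$, so the inclusion and inequality reduce to showing that for $s=(\gamma_1-\gamma_2)/2\leq 0$ the operator with symbol $m_s(\xi)=(1+\phi(|\xi|^2))^{s}$ is bounded on $L_p$. I would verify the Mikhlin condition $|\xi|^{|\beta|}|D^\beta m_s(\xi)|\leq N$ for $|\beta|\leq \lceil d/2\rceil+1$. The chain rule reduces this to the one-variable bound $\lambda^n|D_\lambda^n (1+\phi(\lambda))^{s}|\leq N$, which I would prove by induction on $n$ using \eqref{eqn 07.19.14.35}: each differentiation produces factors of $\lambda\phi^{(k)}(\lambda)$ that are absorbed by $\phi(\lambda)$, while the boundedness of $s\leq 0$ (or more generally any $s\in\bR$) makes the powers $(1+\phi)^{s-k}$ combine with the produced $\phi(\lambda)^k$ into a bounded factor.

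Part (iv) is the substantive one. The plan is first to record the pointwise equivalence
\[
(1+\phi(|\xi|^2))^{\gamma/2}\sim 1+\phi(|\xi|^2)^{\gamma/2},\qquad \gamma\geq 0,
\]
which is elementary (split according to $\phi(|\xi|^2)\lessgtr 1$), and then upgrade it to an $L_p$-multiplier equivalence. For this I would introduce
\[
m(\xi)=\frac{(1+\phi(|\xi|^2))^{\gamma/2}}{1+\phi(|\xi|^2)^{\gamma/2}},
\]
and verify that both $m$ and $m^{-1}$ satisfy Mikhlin's conditions; again by chain rule this reduces to an induction in $\lambda$ controlled by \eqref{eqn 07.19.14.35}. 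Once Mikhlin is applied, one gets $\|(1-\phi(\Delta))^{\gamma/2}u\|_{L_p}\sim\|\cF^{-1}[(1+\phi(|\xi|^2)^{\gamma/2})\hat u]\|_{L_p}$, and the right side is clearly equivalent to $\|u\|_{L_p}+\|\phi(\Delta)^{\gamma/2}u\|_{L_p}$.

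The main technical obstacle is the Mikhlin verification for the compound symbols involving $\phi^{\gamma/2}$ in (iv), because, unlike in the classical case $\phi(\lambda)=\lambda$, the function $\phi$ has no scaling and may not be smooth at the origin in a controlled way; the only structural information available is \eqref{eqn 07.19.14.35}. The induction needs to be arranged so that the various factors $\phi^a$ produced by differentiation cancel against $(1+\phi^{\gamma/2})^{-1}$ or $(1+\phi)^{-1}$ in both the regimes $\phi(|\xi|^2)\ll 1$ and $\phi(|\xi|^2)\gg 1$, so splitting the argument according to these two regimes when bookkeeping the derivative bounds is the key point to get right.
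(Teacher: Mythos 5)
Your proof is correct and follows essentially the same route as the paper, which simply cites Lemma 6.1 of \cite{kim2013parabolic} and remarks that that proof rests on \eqref{eqn 07.19.14.35}; your Mikhlin-multiplier argument driven by \eqref{eqn 07.19.14.35} (formal composition of symbols for (i)--(ii), boundedness of $(1+\phi(|\xi|^2))^{s}$ for $s\le 0$ for (iii), and the ratio symbol $(1+\phi)^{\gamma/2}/(1+\phi^{\gamma/2})$ for (iv)) is precisely that argument written out. One small slip: in (iii) the parenthetical claim that the bound $\lambda^{n}|D_\lambda^{n}(1+\phi(\lambda))^{s}|\le N$ holds ``more generally for any $s\in\bR$'' is false for $s>0$ (the symbol itself is then unbounded), but since you only invoke the case $s\le 0$ there, nothing in the argument is affected.
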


\begin{proof}
See \cite[Lemma 6.1]{kim2013parabolic}. We remark that \cite[Lemma 6.1]{kim2013parabolic} is proved for arbitrary Bernstein functions $\phi$ with no drift, that is $b=0$ (see \eqref{eqn 07.17.16:36}). The same proof works for us  because it is proved based on \eqref{eqn 07.19.14.35}.
\end{proof}

Take a function $\Psi\in\cS(\R^{d})$ whose Fourier transform $\hat{\Psi}(\xi)$ is supported in a strip $\{1/2\leq |\xi| \leq 2\}$, $\hat{\Psi}\geq 0$, and 
\begin{equation}\label{eqn 06.23.10:19}
\sum_{j\in\bZ}\hat{\Psi}(2^{-j}\xi)=1,\quad \xi\neq 0.
\end{equation}
Define 
\begin{equation}\label{eqn 06.23.10:17}
\begin{gathered}
\hat{\Psi}_{j}(\xi)=\hat{\Psi}(2^{-j}\xi),\quad j=\pm1,\pm2,\dots,
\\
\hat{\Psi}_{0}(\xi)=1-\sum_{j=1}^{\infty}\hat{\Psi}(\xi).
\end{gathered}
\end{equation}
Also, for  distributions (or  functions) $f$ we define $f_{j}=\Psi_{j}\ast f$, where $\ast$ denotes the convolution.

\begin{definition}(cf. \cite{mikulevivcius2019cauchy}) \label{def 06.17.10:27}
Let $1<p,q<\infty$ and $s\in \bR$. The Besov space $B^{\phi,s}_{p,q}=B^{\phi,s}_{p,q}(\bR^d)$ is a closure of $\cS$ under the norm  
\begin{equation*}\label{eqn 07.17.15:53}
\|u\|_{B^{\phi,s}_{p,q}}:=\left\{ \|u_{0}\|^{q}_{L_{p}}+\sum_{j=1}^{\infty}\phi(    2^{2j} )^{sq/2}\|u_{j}\|^{q}_{L_{p}} \right\}^{1/q}.
\end{equation*}
\end{definition}

\begin{remark}\label{rmk 06.23.11:13}
(i) If $\phi(\lambda)=\lambda$, then $B_{p,q}^{\phi,s}$ corresponds to the standard Besov space $B_{p,q}^s$. In general, since $\phi(\lambda)\leq N\lambda$ for $\lambda\geq 1$, for $u\in \cS$ and  $s \geq 0$ we have
$$
\|u\|_{B^{\phi,s}_{p,q}}\leq N \|u\|_{B^{s}_{p,q}}.
$$
This and \cite[Theorem 6.2.4]{Bergh} yield that  if $\gamma>(s\vee 0)$ then 
\begin{equation}
	\label{eqn 08.05.19:49}
 H^{\gamma}_p \subset B_{p,q}^{\phi,s},
\end{equation}
 and the embedding is continuous. 

(ii) Let $J^{s}u:=(1-\phi(\Delta))^{s/2}u$ and  $\tilde{B}_{p,q}^{\phi,s}$ be the closure of $\cS$ under the norm
$$
\|u\|_{\tilde{B}^{\phi,s}_{p,q}}=\left\{ \sum_{j=0}^{\infty}\|J^{s}\Psi_{j}\ast u\|^{q}_{L_{p}} \right\}^{1/p}.
$$
We now show that
\begin{equation}\label{eqn 06.25.13:31}
\tilde{B}^{\phi,s}_{p,q} = B^{\phi,s}_{p,q}, \quad \quad 1<p,q<\infty, \, s\in \bR.
\end{equation}
First we  prove that there is a constant $N$ such that
\begin{align} \label{multiplier result}
\|J^{s}\Psi_{j}\ast u\|_{L_{p}}  \leq N \phi(  2^{2j}  )^{s/2}\|u_{j}\|_{L_{p}}, \quad \forall j\in\bN_{0}.
\end{align}
 If we denote $\zeta_{j}=\Psi_{j-1}+\Psi_{j}+\Psi_{j+1}$ for $j\in\bN$ and $\zeta_{0}=\Psi_{0}+\Psi_{1}$, then
$$
(1+\phi(|\xi|^2))^{s/2} \hat{\Psi}_{j}(\xi) = (1+\phi(|\xi|^2))^{s/2} \hat{\Psi}_{j}(\xi) \hat{\zeta}_j(\xi).
$$
Fix $j$, and let $\xi$ be  in the support of $\zeta_j$.  Then for any $\nu\in \bR$ and $m\in \bN_{0}$, by \eqref{eqn 07.19.14.35}
$$
\frac{(1+\phi(|\xi|^2))^{\nu}}{\phi(2^{2j})^\nu} \leq N,
$$
$$
\left|\frac{D^{m}_{\xi} (\phi(|\cdot|^2))(\xi)}{\phi(   2^{2j} )^{m}}\right|+\left|D^{m}_{\xi} (\hat{\zeta}_j(\cdot))(\xi)\right| \leq N |\xi|^{-|m|},
$$
where $N$ is independent of $j$. Hence combining the above inequalities, for any $m \in \bN_{0}$ it follows that
$$
\left| D^{m}_{\xi}\left[\frac{ (1+\phi(|\xi|^2))^{s/2} \hat{\Psi}_{j}(\xi) \hat{\zeta}_j(\xi)}{\phi(   2^{2j} )^{s/2}}\right]  \right| \leq N |\xi|^{-|m|},
$$
where $N$ is independent of $j$. Therefore, due to \cite[Theorem 5.2.7]{grafakos2008classical}, \eqref{multiplier result} holds . This implies that for any $s\in\bR$
$$
B^{\phi,s}_{p,q}\subset\tilde{B}^{\phi,s}_{p,q}.
$$
Using \eqref{multiplier result} again, one can easily check 
$$
\|u_{j}\|_{L_{p}}=\|J^{-s}J^s u_{j}\|_{L_{p}} \leq N  \phi(2^{2j})^{-s/2}\|J^s u_{j}\|_{L_{p}},
$$
and therefore  \eqref{eqn 06.25.13:31} holds.

(iii)
Since $(1-\phi(\Delta))^{\nu/2}$ is an isometry from $\tilde{B}^{\phi,s}_{p,q}$ to $\tilde{B}^{\phi,s-\nu}_{p,q}$, we may consider this operator as an isometry from $B^{\phi,s}_{p,q}$ to $B^{\phi,s-\nu}_{p,q}$ due to \eqref{eqn 06.25.13:31}.
\end{remark}

For $p,q\in(1,\infty), \gamma\in\bR$ and $T<\infty$, we denote
\begin{equation*}
\bH_{q,p}^{\phi,\gamma}(T):=L_q\left((0,T); H_p^{\phi,\gamma}\right), \qquad \bL_{q,p}(T):=\bH_{q,p}^{\phi,0}(T).
\end{equation*}
We write $u\in C_{p}^{\alpha,\infty}([0,T]\times\bR^d)$ if $D^m_x u, \partial_t^\alpha D^m_x u \in C([0,T];L_p)$ for any $m\in \bN_{0}$. $C_{p}^{\infty}(\bR^d)$ denotes the set of functions $u_0=u_0(x)$ such that $D^m_xu_0\in L_p$ for any $m\in \bN_{0}$.

\begin{definition}
 \label{defn defining}

(i) For $\alpha\in(0,1)$, $1<p,q<\infty$, and $\gamma\in\bR$, we write $u\in {\bH_{q,p}^{\alpha,\phi,\gamma+2}}(T)$ if there exists a sequence $u_n\in C^{\alpha,\infty}_{p}([0,T]\times \bR^d)$ satisfying
\begin{equation*}
\|u-u_n\|_{\bH_{q,p}^{\phi,\gamma+2}(T)} \to 0 \text{ and } \|\partial_t^\alpha u_n - \partial_t^\alpha u_m \|_{\bH_{q,p}^{\phi,\gamma}(T)} \to 0
\end{equation*}
as $n, m\to \infty$. We call this sequence $u_n$ a defining sequence of $u$, and we define
\begin{equation*}
\partial_t^\alpha u= \lim_{n \to \infty} \partial_t^\alpha u_n \text{ in } \bH_{q,p}^{\phi,\gamma}(T).
\end{equation*}
The norm in $\bH_{q,p}^{\alpha,\phi,\gamma+2}(T)$ is naturally given by
\begin{equation*}
\|u\|_{{\bH_{q,p}^{\alpha,\phi,\gamma+2}}(T)}=\|u\|_{\bH_{q,p}^{\phi,\gamma+2}(T)}+\|\partial_t^\alpha u\|_{\bH_{q,p}^{\phi,\gamma}(T)}.
\end{equation*}

(ii) For $u\in{\bH_{q,p}^{\alpha,\phi,\gamma+2}(T)}$ and $u_0 \in B^{\phi,\gamma+2-\frac{2}{\alpha q}}_{p,q}$, we say  $u(0,x)=u_0$ if there exists a defining sequence $u_n$ such that 
$u_{n}(0,\cdot)\in C_{p}^{\infty}(\bR^d)$ and
\begin{equation*}
u_n(0,\cdot) \to u_0 \text{ in } B^{\phi,\gamma+2-\frac{2}{\alpha q}}_{p,q}.
\end{equation*}

(iii) We write $u\in{\bH_{q,p,0}^{\alpha,\phi,\gamma+2}(T)}$, if there exists a defining sequence $u_n$ such that
\begin{equation*}
u_n(0,x)=0 \qquad \forall x\in\bR^d, \quad \forall n\in\bN.
\end{equation*}

\end{definition}

\begin{remark} \label{Hvaluedconti}
(i) Obviously, $\bH_{q,p}^{\alpha,\phi,\gamma+2}(T)$ is a Banach space.

 (ii) Applying \eqref{eqn 07.19.15:09} to $u_n(t,x)-u_n(0,x)$ and using \eqref{eq:Lp continuity of I}, one can check that Definition \ref{defn defining} (ii) is independent of the choice of a defining sequence.
 
(iii) Actually, by Corollary \ref{cor 8.3.1},  $u\in{\bH_{q,p}^{\alpha,\phi,\gamma+2}(T)}$ and $u(0,\cdot)=0$ if and only if  $u\in{\bH_{q,p,0}^{\alpha,\phi,\gamma+2}(T)}$.

(iv) Following  \cite[Remark 3]{mikulevivcius2017p},  one can show that the embedding $H_p^{2n} \subset H_p^{\phi,2n}$ is continuous for any $n\in\bN$.  

 \end{remark}

\begin{lemma} \label{basicproperty}
Let $\alpha\in(0,1)$, $1<p,q<\infty$, $\gamma\in\R$, and $T<\infty$.

(i) The space $\bH_{q,p,0}^{\alpha,\phi,\gamma+2}(T)$ is a closed subspace of $\bH_{q,p}^{\alpha,\phi,\gamma+2}(T)$.

(ii) $C_c^\infty(\bR^{d+1}_+)$ is dense in $\bH_{q,p,0}^{\alpha,\phi,\gamma+2}(T)$.

(iii) For any $\gamma,\nu\in\bR$, $(1-\phi(\Delta))^{\nu/2}:\bH_{q,p}^{\alpha,\phi,\gamma+2}(T)\to\bH_{q,p}^{\alpha,\phi,\gamma-\nu+2}(T)$ is an isometry, and for any  $u\in\bH_{q,p}^{\alpha,\phi,\gamma+2}(T)$
\begin{equation*} \label{spacetimederiv change}
(1-\phi(\Delta))^{\nu/2}\partial_t^\alpha u=\partial_t^\alpha (1-\phi(\Delta))^{\nu/2} u.
\end{equation*}
\end{lemma}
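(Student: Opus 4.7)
The plan is to establish each part in turn using defining sequences together with the spatial isometry of $(1-\phi(\Delta))^{\nu/2}$ given in Lemma~\ref{H_p^phi,gamma space}(ii).

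For (i), I would argue by diagonal extraction. Let $\{u^{(n)}\} \subset \bH_{q,p,0}^{\alpha,\phi,\gamma+2}(T)$ converge to $u$ in $\bH_{q,p}^{\alpha,\phi,\gamma+2}(T)$, and pick for each $n$ a defining sequence $\{u^{(n)}_k\}_k \subset C_p^{\alpha,\infty}$ with $u^{(n)}_k(0,\cdot)=0$. Selecting $k_n$ so that $u^{(n)}_{k_n}$ approximates $u^{(n)}$ in the graph norm within $1/n$, the sequence $\{u^{(n)}_{k_n}\}$ is a defining sequence for $u$ consisting of smooth functions with zero initial trace, so $u \in \bH_{q,p,0}^{\alpha,\phi,\gamma+2}(T)$.

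For (iii), on the smooth class $C_p^{\alpha,\infty}$ the operator $(1-\phi(\Delta))^{\nu/2}$ is a spatial Fourier multiplier and therefore commutes pointwise in $t$ with $\partial_t^\alpha$. Lemma~\ref{H_p^phi,gamma space}(ii) gives $\|(1-\phi(\Delta))^{\nu/2}v\|_{H_p^{\phi,\gamma-\nu}} = \|v\|_{H_p^{\phi,\gamma}}$ for each fixed $t$, so $L_q$-integration in time yields the required isometry at both the $u$ and the $\partial_t^\alpha u$ levels on smooth functions. Passing a defining sequence $\{u_n\}$ for $u$ through $(1-\phi(\Delta))^{\nu/2}$ produces a defining sequence for $(1-\phi(\Delta))^{\nu/2}u$ whose $\partial_t^\alpha$-limit is $(1-\phi(\Delta))^{\nu/2}\partial_t^\alpha u$, giving simultaneously the commutation identity and the extension of the isometry from the smooth class.

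For (ii), I would use the standard three-step approximation starting from a defining sequence $\{u_n\} \subset C_p^{\alpha,\infty}$ with $u_n(0,\cdot)=0$: (a) kill $u_n$ near $t=0$ by multiplication with a smooth temporal cutoff $\eta_\varepsilon$ supported in $[\varepsilon,T]$ and equal to one on $[2\varepsilon,T]$; (b) localize in $x$ with a spatial cutoff $\chi_R$; (c) mollify in $(t,x)$ with a bump of radius $\delta<\varepsilon$. The output lies in $C_c^\infty(\bR_+^{d+1})$. Steps (b) and (c) are essentially routine, with the only subtlety being that $\chi_R$ does not commute with $\phi(\Delta)$; convergence of $\chi_R u_n$ in $H_p^{\phi,\gamma+2}$ as $R\to\infty$ can be verified via the integral representation of $\phi(\Delta)$ together with \eqref{eqn 07.19.14.35} and dominated convergence.

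The main obstacle is step (a), because $\partial_t^\alpha$ is nonlocal and multiplication by $\eta_\varepsilon$ does not localize it. Since both $u_n(0,\cdot)=0$ and $(\eta_\varepsilon u_n)(0,\cdot)=0$, both fractional derivatives coincide with the Riemann--Liouville derivative $D_t^\alpha$, and the difference reduces to $D_t^\alpha\bigl((1-\eta_\varepsilon)u_n\bigr)$. Substituting $u_n = I_t^\alpha \partial_t^\alpha u_n$ via \eqref{eqn 07.19.15:09}, one rewrites this difference as an operator applied to $\partial_t^\alpha u_n$ whose kernel is concentrated at times $\leq 2\varepsilon$, and \eqref{eq:Lp continuity of I} together with dominated convergence yields convergence to zero in $\bH_{q,p}^{\phi,\gamma}(T)$ as $\varepsilon\downarrow 0$. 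Once this time-cutoff estimate is in place, concatenating the three steps and invoking (i) proves the density claim.
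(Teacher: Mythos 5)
Your proposal is correct in substance, and parts (i) and (iii) essentially coincide with the paper's argument ((i) is left to the reader there; for (iii) the paper likewise commutes the spatial multiplier with $\partial_t^\alpha$ on smooth functions and then passes through a defining sequence, inserting spatial cutoffs $\eta_n$ only to certify that $(1-\phi(\Delta))^{\nu/2}u_n$ admits a defining sequence in $C_p^{\alpha,\infty}$ --- a small point you should also verify, using Remark \ref{Hvaluedconti}(iv)). The genuine divergence is in (ii). The paper never multiplies by a temporal cutoff: it approximates $u$ by the one--sided time mollification $u^{\varepsilon}(t,x)=\eta(t)\int_0^\infty u(s,x)\eta_{1,\varepsilon}(t-s)\,ds$ with $\eta_1\in C_c^\infty((1,2))$, so that $u^\varepsilon$ vanishes for $t<\varepsilon$ automatically and, since convolution with a causal kernel commutes with $D_t^\alpha$ when $u(0,\cdot)=0$ (Fubini), one has $\partial_t^\alpha u^\varepsilon=(\partial_t^\alpha u)^\varepsilon$ exactly; graph--norm convergence is then immediate and the commutator problem never arises. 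Your multiplicative cutoff forces you to estimate $D_t^\alpha\bigl((1-\eta_\varepsilon)u_n\bigr)$, which is indeed the delicate step. Your kernel reduction is the right idea and can be completed: writing $I_t^{1-\alpha}\bigl[(1-\eta_\varepsilon)I_t^\alpha g\bigr](t)=\int_0^{t\wedge 2\varepsilon}K_\varepsilon(t,r)g(r)\,dr$ with $0\le K_\varepsilon\le 1$, one needs the quantitative bounds $|\partial_t K_\varepsilon(t,r)|\le N(t-2\varepsilon)^{-\alpha-1}(2\varepsilon-r)^{\alpha}$ for $t>3\varepsilon$ and $|\partial_t K_\varepsilon(t,r)|\le N\varepsilon^{-1}$ for $t\le 3\varepsilon$, which yield convergence to $0$ in $L_q((0,T);H_p^{\phi,\gamma})$; note that \eqref{eq:Lp continuity of I} by itself is not sufficient here, and that one cannot instead write $D_t^\alpha v=I_t^{1-\alpha}v'$ since $u_n\in C_p^{\alpha,\infty}$ need not be differentiable in $t$. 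In short, your route is more robust (it does not rely on the approximation being a convolution) but pays for it with a nontrivial commutator estimate that the paper's causal mollification sidesteps entirely.
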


\begin{proof}

(i) The proof is straightforward and is left to the reader.

(ii)  It suffices to show that for any given $u\in C^{\alpha,\infty}_{p}([0,T]\times\bR^d)$ with $u(0,\cdot)=0$, there exists a sequence $u_n\in C_c^\infty(\bR^{d+1}_+)$ such that
\begin{equation}
\label{eqn 6.2.1}
\|u_n-u\|_{\bH_{q,p}^{\alpha,\phi,\gamma+2}(T)}\to\infty
\end{equation}
as $n\to\infty$. By Remark \ref{Hvaluedconti} (iv) and considering a multiplication with smooth cut-off function of $x$, one may assume that $u$ has compact support,  that is, with some $R>0$, $u(t,x)=0$ whenever $|x|>R$.
Extend $u=0$ if $t>T$. Take a nonnegative smooth function $\eta_1\in C_c^\infty((1,2))$ so that
\begin{equation*}
\int_0^\infty \eta_1(t)dt=1.
\end{equation*}
For $\varepsilon >0$, we define
\begin{equation*}
\eta_{1,\varepsilon}(t)=\varepsilon^{-1}\eta_1(t/\varepsilon),
\end{equation*}
\begin{equation*}
u^{\varepsilon}(t,x)=\eta(t)\int_0^\infty u(s,x)\eta_{1,\varepsilon}(t-s)ds,
\end{equation*}
where $\eta\in C^\infty([0,\infty))$ such that $\eta(t)=1$ for all $t\leq T+1$ and vanishes for all large $t$. Then,  due to  $\eta_1\in C_c^\infty((1,2))$, 
\begin{align*}
u^{\varepsilon}(t,x)=0 \qquad \forall t<\varepsilon, \quad \forall x\in\bR^d,
\end{align*}
and $u^{\varepsilon_{1}}\in\Ccinf(\R^{d+1}_{+})$. Also using $u(0,x)=0$ and Fubini's theorem, one can prove 
\begin{equation*}
\partial_t^\alpha u^{\varepsilon}(t)=(\partial_t^\alpha u)^{\varepsilon}(t), \quad t\leq T.
\end{equation*}
Therefore, for any $n\in \bN$,
$$
\|u^{\varepsilon}-u\|_{L_q([0,T];H^{2n}_p)}+\|\partial_t^\alpha u^{\varepsilon}-\partial_t^\alpha u \|_{L_q([0,T];H^{2n}_p)} \to 0
$$
as $\varepsilon \downarrow 0$.  This and Remark \ref{Hvaluedconti} (iv) yields \eqref{eqn 6.2.1} with $u_n:=u^{1/n}$.
Therefore, (ii) is proved.

(iii) We first prove the claims  for functions  $u\in C^{\alpha,\infty}_{p}([0,T]\times\bR^d)$. Take $\eta\in C_c^\infty (\bR^d)$ such that $\eta=1$ on $B_1$ and $\eta=0$ outside of $B_2$. For $n\in\bN$, define $\eta_n(x)=\eta(x/n)$ and 
$$
 v_n:=\eta_n (1-\phi(\Delta))^{\nu/2} u\in C^{\alpha,\infty}_{p}([0,T]\times \bR^d).
$$
Then,  for any $m\in \bN$, $v_n \to (1-\phi(\Delta))^{\nu/2}u$ in $L_{q}((0,T);H_p^m)$ as $n\to \infty$, and therefore  by Remark \ref{Hvaluedconti} (iv), $v_{n} \to (1-\phi(\Delta))^{\nu/2} u$ in $\bH_{q,p}^{\phi,\gamma-\nu+2}(T)$ as $n\to \infty$.  Similarly,
$$
\partial^{\alpha}_tv_n=\eta_n (1-\phi(\Delta))^{\nu/2} \partial^{\alpha}_t u \to (1-\phi(\Delta))^{\nu/2} \partial^{\alpha}_t u
$$
in $\bH_{q,p}^{\phi,\gamma-\nu+2}(T)$ as $n\to \infty$. Therefore $u\in \bH_{q,p}^{\alpha,\phi,\gamma-\nu+2}(T)$, and all the assertions of (iii) also follow.

Now, let $u\in\bH^{\alpha,\phi,\gamma+2}_{q,p}(T)$. Take a  defining sequence $u_{n}$ for $u$.  Then, by the above result
\begin{equation*}
    \label{eqn 6.3.1}
(1-\phi(\Delta))^{\nu/2} u_n \in \bH_{q,p}^{\alpha,\phi,\gamma-\nu+2}(T), \quad \partial^{\alpha}_t (1-\phi(\Delta))^{\nu/2} u_n=(1-\phi(\Delta))^{\nu/2} \partial^{\alpha}_tu_n.
\end{equation*}
From these and the fact $u_n \to u$ in $\bH_{q,p}^{\alpha,\phi,\gamma+2}(T)$ as $n\to \infty$, it follows that   $(1-\phi(\Delta))^{\nu/2} u_n$ is a Cauchy sequence in 
$\bH_{q,p}^{\alpha,\phi,\gamma-\nu+2}(T)$. Let $w$ denote the limit in this space. Then, since $(1-\phi(\Delta))^{\nu/2} u_n \to (1-\phi(\Delta))^{\nu/2} u$ in 
$\bH^{\phi,\gamma-\nu+2}_p(T)$, we conclude $w=(1-\phi(\Delta))^{\nu/2} u\in  \bH_{q,p}^{\alpha,\phi,\gamma-\nu+2}(T)$. The claim for the isometry is obvious, and the other assertion of (iii) also follows. The
lemma is proved. 
\end{proof}

Here is the main result of this  article.

\begin{theorem} \label{main theorem}
Let $\gamma \in \bR, \alpha\in(0,1)$, $p,q\in(1,\infty)$. Suppose  Assumption \ref{ass bernstein} holds. Then for any $u_0\in B^{\phi,\gamma+2-2/\alpha q}_{p,q}$ and $f\in \bH_{q,p}^{\phi,\gamma}(T)$, the equation
\begin{equation}\label{mainequation}
\partial_t^\alpha u = \phi(\Delta)u + f,\quad t>0\,; \quad u(0,\cdot)=u_0
\end{equation}
admits a unique solution $u$ in the class $\bH_{q,p}^{\alpha,\phi,\gamma+2}(T)$, and   we have

\begin{equation} \label{mainestimate}
\|u\|_{\bH_{q,p}^{\alpha,\phi,\gamma+2}(T)}\leq N \left( \|f\|_{\bH_{q,p}^{\phi,\gamma}(T)}+ \|u_{0}\|_{B^{\phi,\gamma+2-2/\alpha q}_{p,q}} \right),
\end{equation}
where $N=N(\alpha,d,\phi,p,q,\gamma,T)$.  Moreover, if $u_0=0$ then
\begin{equation}
   \label{mainestimate-11}
\|\phi(\Delta)u\|_{\bH_{q,p}^{\phi,\gamma}(T)}\leq N_0 \|f\|_{\bH_{q,p}^{\phi,\gamma}(T)},
\end{equation}
where $N_0=N_0(\alpha,d,\phi,p,q,\gamma)$ is independent of $T$. 
\end{theorem}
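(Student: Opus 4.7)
The plan is first to reduce to $\gamma = 0$. By Lemma~\ref{basicproperty}(iii) the map $(1-\phi(\Delta))^{\gamma/2}$ is an isometry from $\bH_{q,p}^{\alpha,\phi,\gamma+2}(T)$ onto $\bH_{q,p}^{\alpha,\phi,2}(T)$ that commutes with $\partial_t^\alpha$ and $\phi(\Delta)$, and by Remark~\ref{rmk 06.23.11:13}(iii) it also carries $B_{p,q}^{\phi,\gamma+2-2/\alpha q}$ isometrically onto $B_{p,q}^{\phi,2-2/\alpha q}$. Setting $\tilde u=(1-\phi(\Delta))^{\gamma/2}u$, $\tilde f=(1-\phi(\Delta))^{\gamma/2}f$, $\tilde u_0=(1-\phi(\Delta))^{\gamma/2}u_0$ reduces the problem to $\gamma=0$. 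Then by linearity split $u=v+w$, where $v$ solves \eqref{mainequation} with zero initial data, and $w$ solves the homogeneous equation ($f=0$) with initial datum $u_0$.

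For the zero initial data piece (which must yield the $T$-independent bound \eqref{mainestimate-11}), I would use the representation
$$
v(t,x)=\int_0^t \int_{\bR^d} q(t-s,x-y)\,f(s,y)\,dy\,ds,
$$
where $q$ is the fundamental solution built from the transition density of the subordinate Brownian motion time-changed by an inverse subordinator (see Sections~4--5). Feeding in the sharp pointwise derivative bounds on $q$ from Section~3, the target is the sharp-function estimate
\begin{equation*}
|(\partial_t^\alpha v)^{\#}(t,x)| + |(\phi(\Delta) v)^{\#}(t,x)| \leq N\,\|f\|_{L_\infty}
\end{equation*}
for $f \in \Ccinf(\bR^{d+1}_+)$. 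The standard mechanism is: over an anisotropic parabolic cylinder of radius $r$ centered at $(t_0,x_0)$ (with time scale chosen so that $t\sim 1/\phi(r^{-2})$), decompose $f = f_1+f_2$ with $f_1$ supported in an enlarged cylinder; bound the local piece by the Hardy--Littlewood maximal function of $|f|$, and bound the far piece via the pointwise smoothness of $q$ off the diagonal. Combining \eqref{eqn 8.17.3} with the Fefferman--Stein theorem and Marcinkiewicz interpolation gives \eqref{mainestimate-11} for $p=q$, uniformly in $T$. The case $p\neq q$ is then obtained by viewing $f\mapsto \phi(\Delta)v$ as a vector-valued singular integral in the time variable with values in $L_p(\bR^d)$, and verifying the H\"ormander kernel condition with the same derivative estimates of $q$. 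The $\partial_t^\alpha v$ term is recovered from the equation, while control of $v$ itself follows from $v=I_t^\alpha(\phi(\Delta)v+f)$ together with \eqref{eq:Lp continuity of I}.

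For the initial-data piece I would write $w(t,x)=\int_{\bR^d} p(t,x-y)u_0(y)\,dy$ and decompose $u_0=\sum_{j\geq 0} \Psi_j\ast u_0$ using the Littlewood--Paley blocks in \eqref{eqn 06.23.10:17}. On $\mathrm{supp}\,\hat\Psi_j$ one has $\phi(|\xi|^2)\sim\phi(2^{2j})$, so each dyadic piece $w_j$ decays in time roughly like a Mittag--Leffler function of argument $\phi(2^{2j})t^\alpha$, which after applying the $L_q((0,T);L_p)$ norm produces exactly the weight $\phi(2^{2j})^{(2-2/\alpha q)\cdot q/2}$ on $\|u_0\ast\Psi_j\|_{L_p}^q$ matching the Besov norm. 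Summing over $j$ via the Minkowski/Littlewood--Paley characterization yields $\|w\|_{\bH_{q,p}^{\alpha,\phi,2}(T)}\leq N\|u_0\|_{B_{p,q}^{\phi,2-2/\alpha q}}$. Adding the two bounds proves the a priori estimate \eqref{mainestimate}; uniqueness follows from it applied to the difference of two solutions, while existence follows from Lemma~\ref{basicproperty}(ii) by approximating $(f,u_0)$ by smooth data.

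The main obstacle will be the pointwise BMO bound \eqref{eqn 8.17.3}. Derivatives of $q$ are not globally integrable, so the usual singular-integral cancellation is delicate and requires an anisotropic cylinder scale tuned to $\phi$, together with precise decay of $\phi(\Delta)q$ and $\partial_t^\alpha q$ only algebraic in $|x|$ and $t$. It is precisely here that Assumption~\ref{ass bernstein}, which forces the global doubling \eqref{e:H} at all scales $0<r<R<\infty$, is essential: it is what allows the derivative estimates of Section~3 to hold for every $t>0$, and hence what makes the constant $N_0$ in \eqref{mainestimate-11} independent of $T$.
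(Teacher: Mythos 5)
Your overall architecture --- reduction to $\gamma=0$ by the isometry $(1-\phi(\Delta))^{\gamma/2}$, the splitting $u=v+w$ into a zero-initial-data piece and a homogeneous piece, BMO/Fefferman--Stein/Marcinkiewicz for $p=q$ plus the vector-valued Calder\'on--Zygmund theorem for $p\neq q$ on the first piece, and Littlewood--Paley decomposition on the second --- is exactly the paper's. However, your representation formulas use the wrong kernels, and this is not cosmetic. The Duhamel formula for the Caputo equation is $v(t,x)=\int_0^t\int_{\bR^d} q_{\alpha,1}(t-s,x-y)f(s,y)\,dy\,ds$ with $q_{\alpha,1}=D_t^{1-\alpha}q$ (Lemma \ref{u=qfsolution}); with $q$ itself in place of $q_{\alpha,1}$ the function does not solve $\partial_t^\alpha v=\phi(\Delta)v+f$, and the appearance of the extra fractional derivative $D_t^{1-\alpha}$ in the kernel is precisely the nontrivial feature of the time-fractional Duhamel principle. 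Symmetrically, the homogeneous solution is $w=q(t,\cdot)\ast u_0$ with $q$ from \eqref{eqn 8.1.1}, not $p(t,\cdot)\ast u_0$: the density $p$ of the subordinate Brownian motion solves $\partial_t u=\phi(\Delta)u$, not the $\alpha$-fractional equation. Your Mittag--Leffler decay heuristic shows you intend the right object, but as written both formulas are incorrect. Relatedly, the anisotropic cylinders must be tuned to the scaling $t^\alpha\phi(r^{-2})\sim 1$, i.e. time scale $\kappa(r)=(\phi(r^{-2}))^{-1/\alpha}$, not $t\sim 1/\phi(r^{-2})$ as you wrote; with the wrong time scale the far-field cancellation estimates do not close.

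Two further gaps. Interpolating the $L_2$ bound against the $L_\infty\to\mathrm{BMO}$ bound via Fefferman--Stein and Marcinkiewicz yields only $p\in[2,\infty)$; the range $p\in(1,2)$ requires a separate duality argument (the paper's identity \eqref{duality}), which you omit. (Bounding the local piece by a maximal function does not rescue this: the local estimate comes from the $L_2$ theory, so at best one controls $(M|f|^2)^{1/2}$, which again only gives $p>2$.) Finally, ``uniqueness follows from the estimate applied to the difference of two solutions'' is circular as stated, since the a priori estimate is derived only for the solution you construct by the Duhamel formula. Uniqueness requires showing that an arbitrary $u\in\bH^{\alpha,\phi,2}_{q,p}(T)$ with $f=0$, $u_0=0$ vanishes; the paper does this by noting $u\in\bH^{\alpha,\phi,2}_{q,p,0}(T)$, approximating by $u_n\in C_c^\infty(\bR^{d+1}_+)$ (Lemma \ref{basicproperty}(ii)), applying the representation formula to each $u_n$ with data $f_n=\partial_t^\alpha u_n-\phi(\Delta)u_n\to 0$, and passing to the limit. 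That step, or something equivalent, is missing from your argument.
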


\mysection{Estimates of the fundamental solution}

In this section we obtain  sharp bounds of arbitrary order derivatives of the fundamental solution  to equation \eqref{mainequation}. 

We first study  the derivatives of the transition density of $d$-dimensional subordinate Brownian motion.

\begin{lemma} \label{gammaversionj}
Let Assumption \ref{ass bernstein} hold. Then 
there exists a constant $N=N(c, \delta_0)$ such that
\begin{equation} \label{int phi}
\int_{\lambda^{-1}}^\infty r^{-1}\phi(r^{-2})dr \leq N \phi(\lambda^2), \quad \forall\, \lambda>0.
\end{equation}
\end{lemma}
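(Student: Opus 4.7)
The plan is to reduce the inequality to a simple one-variable estimate via a substitution and then apply the lower scaling bound in Assumption \ref{ass bernstein} directly.

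First I would substitute $s = r^{-2}$, so that $dr = -\tfrac{1}{2} s^{-3/2}\,ds$ and $r^{-1} = s^{1/2}$. The limits $r=\lambda^{-1}$ and $r=\infty$ become $s = \lambda^2$ and $s = 0$, so
\begin{equation*}
\int_{\lambda^{-1}}^\infty r^{-1}\phi(r^{-2})\,dr \;=\; \frac{1}{2}\int_{0}^{\lambda^2} s^{-1}\phi(s)\,ds.
\end{equation*}
Thus it suffices to show $\int_{0}^{\lambda^2} s^{-1}\phi(s)\,ds \leq N\,\phi(\lambda^2)$.

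Next I would use Assumption \ref{ass bernstein}: for $0 < s < \lambda^2$, rewriting \eqref{e:H} with $R=\lambda^2$ and $r=s$ gives
\begin{equation*}
\phi(s) \;\leq\; c^{-1}\bigl(s/\lambda^2\bigr)^{\delta_0}\,\phi(\lambda^2).
\end{equation*}
Plugging this into the integral and computing the elementary integral $\int_0^{\lambda^2} s^{\delta_0 - 1}\,ds = \lambda^{2\delta_0}/\delta_0$ yields
\begin{equation*}
\int_{0}^{\lambda^2} s^{-1}\phi(s)\,ds \;\leq\; c^{-1}\,\phi(\lambda^2)\,\lambda^{-2\delta_0}\cdot\frac{\lambda^{2\delta_0}}{\delta_0} \;=\; \frac{\phi(\lambda^2)}{c\,\delta_0},
\end{equation*}
which is exactly what is needed with $N = (2 c \delta_0)^{-1}$.

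There is no real obstacle here. The only subtle point is the need for the lower scaling bound to hold all the way down to $s \to 0^+$ (which is guaranteed by the fact that \eqref{e:H} is assumed for all $0 < r < R < \infty$, not just on a bounded range); this is precisely why the integrability near infinity in the original $r$-integral, which is the near-zero integrability in $s$, can be absorbed into a constant multiple of $\phi(\lambda^2)$. The proof is essentially a one-line substitution plus the scaling inequality.
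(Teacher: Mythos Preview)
Your proof is correct and essentially identical to the paper's: both reduce the integral by a change of variables and then apply the lower scaling bound \eqref{e:H} to compare $\phi$ at the smaller argument with $\phi(\lambda^2)$, leaving a convergent power integral. The only cosmetic difference is that the paper substitutes $r\mapsto \lambda^{-1}r$ (getting $\int_1^\infty r^{-1-2\delta_0}\,dr$) whereas you substitute $s=r^{-2}$ (getting $\int_0^{\lambda^2} s^{\delta_0-1}\,ds$); these are equivalent and yield the same constant $N=(2c\delta_0)^{-1}$.
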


\begin{proof}
Note first that Assumption \ref{ass bernstein} combined with the concavity of $\phi$ gives 
 \begin{equation}
\label{phiratio}
c\left(\frac{R}{r}\right)^{\delta_0}\leq\frac{\phi(R)}{\phi(r)}\leq\frac{R}{r}, \qquad 0<r<R<\infty.
\end{equation}

By the change of variables and \eqref{phiratio},
\begin{equation*}
\begin{aligned}
\int_{\lambda^{-1}}^{\infty}r^{-1}\phi(r^{-2})dr&=\int_{1}^{\infty}r^{-1}\phi(\lambda^{2}r^{-2})dr=\int_{1}^{\infty}r^{-1}\phi(\lambda^{2}r^{-2})\frac{\phi(\lambda^{2})}{\phi(\lambda^{2})}dr
\\
& \leq N \int_{1}^{\infty}r^{-1-2\delta_{0}}dr\phi(\lambda^{2})=N\phi(\lambda^{2}).
\end{aligned}
\end{equation*}

The lemma is proved.
\end{proof}

Recall that $S=\left(S_t\right)_{t\ge0}$ is a subordinator with Laplace exponent $\phi$ and  $W=\left(W_t\right)_{t\ge0}$ is a Brownian motion in $\bR^d$, independent of $S$. We call $X_t:=W_{S_t}$  the subordinate Brownian motion. It is known (see e.g. \cite{bogdan2009potential,sato1999}) that $X_{t}$ is rotationally invariant L\'evy process in $\bR^d$ with chracteristic exponent $\phi(|\xi|^2)$. That is,
\begin{equation} \label{fourier def sub}
\bE \left[e^{i\xi\cdot X_t }\right]=e^{-t\phi(|\xi|^2)}, \qquad \forall\,\, \xi\in\bR^d, \, t>0.
\end{equation}
Using \eqref{fourier def sub}  and the equality  
$$e^{-|z|^2}=(4\pi)^{-d/2} \int_{\R^d}e^{i\xi \cdot z}e^{-|\xi|^2/4}d \xi,$$
 we have for $\lambda>0$
\begin{align}
&\bE[e^{-\lambda |X_t|^2}]=(4\pi)^{-d/2} \int_{\R^d}\bE[ e^{i \sqrt \lambda \xi \cdot X_t}]e^{-|\xi|^2/4}d \xi \nonumber\\
&=(4\pi)^{-d/2} \int_{\R^d} e^{-t\phi( \lambda|\xi|^2)} e^{-|\xi|^2/4}d \xi. \nonumber
\end{align}
Therefore, for $t,\lambda>0$,
\begin{eqnarray}
0&\leq&  \bE  [e^{-\lambda|X_t|^2} - e^{-2\lambda|X_t|^2}] \nonumber \\
&=&(4\pi)^{-d/2} \int_{\R^d} (e^{-t\phi(  \lambda|\xi|^2)}-e^{-t\phi( {2\lambda}|\xi|^2)}) e^{-|\xi|^2/4}d\xi   \nonumber  \\
&=&N(d)\int_0^\infty (e^{-t\phi(  \lambda r^2)}-e^{-t\phi({2 \lambda} r^2)}) e^{-r^2/4} r^{d-1} dr \nonumber\\
&=:&N(d)g_t(\lambda).  \label{e:PP0}
\end{eqnarray}

The following lemma is a version of \cite[Lemma 2.1]{kim2013parabolic}.

\begin{lemma}\label{l:pub1}
Let Assumption \ref{ass bernstein} hold.  Then, there exists a constant $N=N(c,\delta_0,d)$ such that for every $t, v  >0$
$$
g_t( v^{-1}) \le  N t \phi( v^{-1}).
$$
\end{lemma}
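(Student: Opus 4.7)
The plan is to bound the integrand pointwise in $r$ and then integrate. Write
\[
g_{t}(v^{-1})=\int_{0}^{\infty}\bigl(e^{-t\phi(v^{-1}r^{2})}-e^{-t\phi(2v^{-1}r^{2})}\bigr)e^{-r^{2}/4}r^{d-1}\,dr,
\]
and note that since $\phi$ is increasing, $\phi(v^{-1}r^{2})\le\phi(2v^{-1}r^{2})$. Using the elementary estimate $0\le e^{-a}-e^{-b}\le b-a$ valid for $0\le a\le b$, we obtain
\[
0\le e^{-t\phi(v^{-1}r^{2})}-e^{-t\phi(2v^{-1}r^{2})}\le t\bigl(\phi(2v^{-1}r^{2})-\phi(v^{-1}r^{2})\bigr).
\]
This is the linearization step that pulls the $t$-factor out and reduces everything to a control on increments of $\phi$.

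Next I would invoke the concavity of the Bernstein function $\phi$, which together with $\phi(0+)=0$ gives $\phi(R)/\phi(r)\le R/r$ for $0<r<R$ (this is exactly the right-hand inequality in \eqref{phiratio}, established in the preceding lemma). In particular $\phi(2v^{-1}r^{2})\le 2\phi(v^{-1}r^{2})$, so
\[
\phi(2v^{-1}r^{2})-\phi(v^{-1}r^{2})\le\phi(v^{-1}r^{2}).
\]
Combining with the previous step reduces the problem to showing
\[
I(v):=\int_{0}^{\infty}\phi(v^{-1}r^{2})e^{-r^{2}/4}r^{d-1}\,dr\le N\phi(v^{-1}),
\]
with $N=N(c,\delta_{0},d)$.

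To handle $I(v)$ I would split the integral at $r=1$. On $\{r\le 1\}$, monotonicity of $\phi$ gives $\phi(v^{-1}r^{2})\le \phi(v^{-1})$, yielding
\[
\int_{0}^{1}\phi(v^{-1}r^{2})e^{-r^{2}/4}r^{d-1}\,dr\le\phi(v^{-1})\int_{0}^{1}r^{d-1}\,dr\le N(d)\phi(v^{-1}).
\]
On $\{r\ge 1\}$, applying $\phi(R)/\phi(r)\le R/r$ with $R=v^{-1}r^{2}$ and $r=v^{-1}$ gives $\phi(v^{-1}r^{2})\le r^{2}\phi(v^{-1})$, whence
\[
\int_{1}^{\infty}\phi(v^{-1}r^{2})e^{-r^{2}/4}r^{d-1}\,dr\le\phi(v^{-1})\int_{1}^{\infty}r^{d+1}e^{-r^{2}/4}\,dr\le N(d)\phi(v^{-1}).
\]
Adding the two estimates and tracing back through the previous two steps yields the desired bound $g_{t}(v^{-1})\le Nt\phi(v^{-1})$.

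There is no real obstacle here; the only subtlety is choosing to use the crude linear-in-$t$ bound (rather than, say, a more refined use of the lower bound \eqref{e:H}) because for the stated inequality the growth of $\phi$ near infinity only needs to be no worse than linear, which is automatic from concavity. The lower Matuszewska-type bound \eqref{e:H} with exponent $\delta_{0}$ is not needed in this lemma; it will enter later when one needs integrability away from the origin.
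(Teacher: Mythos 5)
Your proof is correct and follows the same basic mechanism as the paper's: linearize via $|e^{-a}-e^{-b}|\le|a-b|$ to pull out the factor $t$, then control $\phi(v^{-1}r^{2})/\phi(v^{-1})$ by a function of $r$ that is integrable against $e^{-r^{2}/4}r^{d-1}\,dr$. The one substantive difference is on $\{r\le 1\}$: the paper invokes the lower scaling bound \eqref{e:H} to majorize the ratio by $c^{-1}r^{2\delta_{0}}$, whereas you use plain monotonicity of $\phi$ to majorize it by $1$; both majorants are integrable, so both work, and your variant has the (minor) advantage that the constant depends only on $d$ and Assumption \ref{ass bernstein} is genuinely not needed here — consistent with your closing remark. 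Your intermediate reduction $\phi(2v^{-1}r^{2})-\phi(v^{-1}r^{2})\le\phi(v^{-1}r^{2})$ via subadditivity is also a clean replacement for the paper's step of bounding the difference by the sum.
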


\begin{proof}
Note that by \eqref{phiratio}  for any $r>0$  
\begin{align*}
&\frac{1}{t \phi( v^{-1})}
\le
\frac{\phi( {2} v^{-1} r^2 )+\phi( v^{-1} r^2)}{\phi( v^{-1})}\frac{1}{t|\phi( {2} v^{-1} r^2 )- \phi( v^{-1} r^2)|}\\
\leq&
N(r^2\vee r^{2\delta_0}) \frac{1}{t|\phi(  {2} v^{-1} r^2 )- \phi( v^{-1} r^2)|}.
\end{align*}
Thus using the inequality $|e^{-a}-e^{-b}| \le |a-b|$, $a, b >0$, we have
\begin{align*}
&\frac{g_t( v^{-1})}{t \phi( v^{-1/2})}\\
\leq& \ 
N\int_0^\infty \frac{|e^{-t\phi( v^{-1} r^2)}-e^{-t\phi(  {2} v^{-1} r^2 )}|}{t|\phi(  2v^{-1} r^2 )- \phi( v^{-1} r^2)|}  e^{-r^2/4} r^{d-1}(r^2\vee r^{2\delta_0})  dr\\
\leq& \ 
N\int_0^\infty  e^{-r^2/4} r^{d-1}(r^2\vee r^{2\delta_0})  dr < \infty.
\end{align*}
Therefore the lemma is proved.
\end{proof}

Let $p(t,x)=p_d(t,x)$ be the transition density of $X_t=W_{S_t}$, the $d$-dimensional subordinate Brownian motion corresponding to $\phi$. Then it is known (\cite[Section 5.3.1]{bogdan2009potential}) that for any $t>0, x\in \bR^d$,
\begin{eqnarray}
\label{eqn 7.20.1}
p(t,x)=p_d(t,x)&=&\frac{1}{(2\pi)^d} \int_{\bR^d} e^{i \xi \cdot x} e^{-t\phi(|\xi|^2)} \,d\xi\\
&=&\int_{(0,\infty)} (4\pi s)^{-d/2} \exp \left(-\frac{|x|^2}{4s}\right) \eta_t (ds),  \label{eqn 7.20.2}
\end{eqnarray}
where $\eta_t(ds)$ is the distribution of $S_t$.  Thus $X_t$  is rotationally invariant.  We put $p(t,r):=p(t,x)$  if $r=|x|$ for notational convenience.  It follows from \eqref{eqn 7.20.2} that $r\to p(t,r)$ is a decreasing function.

\begin{lemma} \label{t:hku}
Let Assumption \ref{ass bernstein} hold. Then there exists a constant $N=N(c, \delta_0, d)$ such that
 $$
 p_d(t, r) \leq N t  r^{-d} \phi(r^{-2}), \quad \quad (t, r) \in (0, \infty) \times (0,\infty).
$$
\end{lemma}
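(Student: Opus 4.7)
The plan is to combine the identity (implicit in \eqref{e:PP0})
$$
\int_{\bR^d}\bigl(e^{-\lambda|x|^2}-e^{-2\lambda|x|^2}\bigr)\, p(t,x)\,dx = N(d)\,g_t(\lambda),
$$
obtained by rewriting $\bE\bigl[e^{-\lambda|X_t|^2}-e^{-2\lambda|X_t|^2}\bigr]$ as an integral against the density, with the bound $g_t(v^{-1}) \le N\,t\,\phi(v^{-1})$ from Lemma \ref{l:pub1}. This formulation is attractive because the left-hand side features $p(t,x)$ directly, whereas all of the delicate Bernstein-function analysis (and the role of Assumption \ref{ass bernstein}) is already absorbed into Lemma \ref{l:pub1}.

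First I would fix $r>0$ and specialize to $\lambda=r^{-2}$. Since the integrand $e^{-\lambda|x|^2}-e^{-2\lambda|x|^2}$ is non-negative on $\bR^d$, I can restrict the integration to the ball $\{|x|\le r\}$ and then exploit the radial monotonicity of $p(t,\cdot)$ (noted just after \eqref{eqn 7.20.2}) to estimate $p(t,x)\ge p(t,r)$ there. A change of variables $y=x/r$ then evaluates
$$
\int_{|x|\le r}\bigl(e^{-|x|^2/r^2}-e^{-2|x|^2/r^2}\bigr)\,dx = c_d\,r^d,
$$
where $c_d:=\int_{|y|\le 1}\bigl(e^{-|y|^2}-e^{-2|y|^2}\bigr)\,dy$ is a strictly positive constant depending only on $d$.

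Together these two steps yield the lower bound $N(d)\,g_t(r^{-2})\ge c_d\,r^d\,p(t,r)$. Applying Lemma \ref{l:pub1} with $v=r^2$ gives the matching upper bound $g_t(r^{-2})\le N\,t\,\phi(r^{-2})$, and dividing through by $c_d\,r^d$ produces
$$
p(t,r)\le N'\,t\,r^{-d}\,\phi(r^{-2}),
$$
which is the claim.

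I do not anticipate a genuine obstacle in this argument: the nonlocal-analytic core of the estimate is already packaged in Lemma \ref{l:pub1}, and rotational invariance plus radial monotonicity of $p(t,\cdot)$ are exactly what one needs to pass from a weighted $L^1$ quantity to a pointwise bound. The only points to get right are the choice $\lambda=r^{-2}$ and the localization to the ball of radius $r$, which is what converts the weighted integral of $p(t,\cdot)$ into $p(t,r)$ multiplied by an elementary $r^d$ factor.
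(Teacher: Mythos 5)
Your argument is correct, and it reaches the bound by a more direct route than the paper. Both proofs rest on exactly the same two ingredients — the identity \eqref{e:PP0} expressing $\bE[e^{-\lambda|X_t|^2}-e^{-2\lambda|X_t|^2}]$ as $N(d)g_t(\lambda)$, Lemma \ref{l:pub1}, and the radial monotonicity of $p(t,\cdot)$ — but they differ in how the pointwise lower bound $r^d p(t,r)\lesssim g_t(r^{-2})$ is extracted. The paper introduces the radial profile $f_t(r)=r^{d/2}p(t,r^{1/2})$, bounds the probability of the annulus $\{\sqrt{r/2}<|X_t|<\sqrt{r}\}$ from below by $f_t(r)$, passes to the Laplace transform to get $\cL f_t(\lambda)\le N\lambda^{-1}g_t(\lambda)$, and then recovers a pointwise bound on $f_t$ from $\cL f_t$ by localizing the Laplace integral to $[v/2,v]$ — again via monotonicity. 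You short-circuit this detour: writing the expectation as $\int(e^{-|x|^2/r^2}-e^{-2|x|^2/r^2})p(t,x)\,dx$, restricting to $B_r$ where the nonnegative weight integrates to $c_d r^d$ and where $p(t,x)\ge p(t,r)$, you get $c_d r^d p(t,r)\le N(d)g_t(r^{-2})$ in one step, and Lemma \ref{l:pub1} finishes. The two localizations (to a ball in $x$ versus to an interval in the Laplace variable) play the same role, so nothing is lost; your version is shorter and avoids the Tauberian-flavored intermediate step. The only point worth stating explicitly in a write-up is that the equality $\bE[h(|X_t|^2)]=\int h(|x|^2)p(t,x)\,dx$ uses that $p(t,\cdot)$ is genuinely the density of $X_t$, which the paper has already set up in \eqref{eqn 7.20.1}--\eqref{eqn 7.20.2}.
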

\begin{proof}
Fix $t \in (0, \infty)$. For $r \geq 0$ define $f_t(r)=r^{d/2}p(t, r^{1/2}).$
Since $r\to p(t,r)$ is decreasing, for $r \ge 0$,
\begin{align}
&\bP(  \sqrt{r/2} <|X_t| <\sqrt{r}) =\int_{\sqrt{r/2} <|y| <\sqrt{r}} p(t, |y|)dy \nonumber\\
&\ge |B_1(0)| (1-2^{-d/2})  r^{d/2} p(t, r^{1/2})=  |B_1(0)| (1-2^{-d/2})  f_t(r). \nonumber
\end{align}
Denoting $\cL f_t(\lambda)$ the Laplace transform of $f_t$, we have
\begin{align}
&\cL f_t(\lambda) \leq N\int_0^\infty \bP(  \sqrt{r/2} <|X_t| <\sqrt{r}) e^{-\lambda r}dr=
N\bE \int_{|X_t|^2}^{2|X_t|^2}e^{-\lambda r}dr \nonumber
\\
&=N\lambda^{-1} \bE [e^{-\lambda|X_t|^2} - e^{-2\lambda|X_t|^2}]=N\lambda^{-1} g_t(\lambda), \quad  \lambda > 0
\label{e:PP6}
\end{align}
from \eqref{e:PP0}.  Furthermore, for any $v>0$
\begin{align*}
\cL f_t(v^{-1})=&\int^\infty_0e^{-av^{-1} } f_t(a)\, da= v \int^\infty_0e^{-s}f_t\left(sv\right)ds\\
\ge& \ v \int^{1}_{1/2} e^{-s}f_t\left(sv\right)ds= v \int^{1}_{1/2} e^{-s} s^{d/2}v^{d/2} p\left(t,s^{1/2}v^{1/2}\right)ds \\
\ge& \  v 2^{-d/2}  v^{d/2}  p\left(t,v^{1/2}\right) \int^{1}_{1/2} e^{-s}ds = 2^{-d/2} v f_t\left(v \right)\left(\int^{1}_{1/2} e^{-s}ds\right).
\end{align*}
Thus
\begin{align}
f_t\left(v\right)\leq 2^{d/2}\frac{v^{-1}\cL f_t(v^{-1})}{e^{-1/2}-e^{-1}}.
\label{e:PP7}
\end{align}
 Finally, combining  \eqref{e:PP6} and \eqref{e:PP7} with Lemma \ref{l:pub1} we
 conclude
 $$
 p(t, r) = r^{-d} f_t(r^2) \le N r^{-d-2}\cL f_t(r^{-2}) \le N r^{-d}g_t(r^{-2}) \le N  t  r^{-d} \phi( r^{-2}).
$$
The lemma is proved.
\end{proof}

   The difference of the following result  from those in the literature is that it 
only concerns the upper bound rather than two-side estimate, but it holds under relatively weaker assumption and gives estimate which holds for all $t>0$.   See Remark \ref{remark heat} for some related results.
 
\begin{lemma} \label{pestimate}
Let Assumption \ref{ass bernstein} hold. Then, there exists a constant $N=N(d,c,\delta_0)$ such that for $(t,x)\in(0,\infty)\times \bR^d$,
\begin{equation} \label{pestimate ineq}
p_d(t,x)\leq N\left(\left(\phi^{-1}(t^{-1})\right)^{d/2}\wedge t\frac{\phi(|x|^{-2})}{|x|^d}\right).
\end{equation}
\end{lemma}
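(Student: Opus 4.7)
My plan is to use the Fourier representation \eqref{eqn 7.20.1} to establish the first bound $N(\phi^{-1}(t^{-1}))^{d/2}$, and then combine with Lemma \ref{t:hku} which already supplies the second bound $Nt\phi(|x|^{-2})/|x|^d$. Since $p_d(t,x)\leq \min\{A,B\}$ follows from $p_d(t,x)\leq A$ and $p_d(t,x)\leq B$ separately, this is all that is needed.

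To obtain the on-diagonal bound, I would start from
\begin{equation*}
p_d(t,x)\leq \frac{1}{(2\pi)^d}\int_{\bR^d} e^{-t\phi(|\xi|^2)}\,d\xi,
\end{equation*}
which is immediate from \eqref{eqn 7.20.1}. Write $\rho:=\phi^{-1}(t^{-1})$, so $t\phi(\rho)=1$, and change variables $\xi=\rho^{1/2}\eta$ to extract the scaling:
\begin{equation*}
\int_{\bR^d} e^{-t\phi(|\xi|^2)}\,d\xi
=\rho^{d/2}\int_{\bR^d} e^{-t\phi(\rho|\eta|^2)}\,d\eta.
\end{equation*}
It remains to show the last integral is bounded by a constant depending only on $d$, $c$, $\delta_0$. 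I would split into $\{|\eta|\leq 1\}$, where the integrand is trivially bounded by $1$, and $\{|\eta|>1\}$. On the outer region, applying Assumption \ref{ass bernstein} with $R=\rho|\eta|^2>r=\rho$ gives $\phi(\rho|\eta|^2)\geq c|\eta|^{2\delta_0}\phi(\rho)$, so $t\phi(\rho|\eta|^2)\geq c|\eta|^{2\delta_0}$, yielding
\begin{equation*}
\int_{|\eta|>1} e^{-t\phi(\rho|\eta|^2)}\,d\eta\leq \int_{\bR^d} e^{-c|\eta|^{2\delta_0}}\,d\eta<\infty.
\end{equation*}
This produces $p_d(t,x)\leq N(\phi^{-1}(t^{-1}))^{d/2}$.

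The main (and essentially only) subtlety is making sure the bound is genuinely uniform in $t>0$ rather than just for small or large $t$; this is exactly where Assumption \ref{ass bernstein} is used, since it is valid on the full range $0<r<R<\infty$ and thus controls the tail of $\phi$ from below by a power function at every scale, which is what makes the rescaled Gaussian-type integral convergent independently of $t$. Once the two pointwise bounds are in hand, \eqref{pestimate ineq} follows immediately by taking the minimum.
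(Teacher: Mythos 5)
Your proposal is correct and follows essentially the same route as the paper: the off-diagonal bound is taken from Lemma \ref{t:hku}, and the on-diagonal bound comes from the Fourier representation \eqref{eqn 7.20.1} split at the scale $|\xi|^2=\phi^{-1}(t^{-1})$, with Assumption \ref{ass bernstein} providing the lower bound $t\phi(|\xi|^2)\geq c\bigl(|\xi|^2/\phi^{-1}(t^{-1})\bigr)^{\delta_0}$ on the outer region. Your preliminary change of variables $\xi=\rho^{1/2}\eta$ is only a cosmetic rearrangement of the paper's direct splitting of the integral.
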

\begin{proof}
By Lemma \ref{t:hku}, we only need to prove
$$
p(t,x)\leq N\left(\phi^{-1}(t^{-1})\right)^{d/2}.
$$
We modify the proof of \cite[Corollary 3.5]{kim2013parabolic}.
Note that
$$
t\phi(|\xi|^2)=\frac{\phi(|\xi|^2)}{\phi(\phi^{-1}(t^{-1}))}.
$$
If $|\xi|^2>\phi^{-1}(t^{-1})$, then by \eqref{phiratio},
\begin{equation} \label{eq 200615}
\frac{\phi(|\xi|^2)}{\phi(\phi^{-1}(t^{-1}))} \geq c\left(\frac{|\xi|^2}{\phi^{-1}(t^{-1})}\right)^{\delta_0}.
\end{equation}
\\
Using \eqref{eqn 7.20.1} and \eqref{eq 200615},
$$
\begin{aligned}
p(t,x)&=\frac{1}{(2\pi)^d}\int_{\bR^d} e^{i\xi\cdot x} e^{-t\phi(|\xi|^2)}d\xi
\\
&\leq \left|\frac{1}{(2\pi)^d}\int_{|\xi|^2>\phi^{-1}(t^{-1})} e^{i\xi\cdot x} e^{-t\phi(|\xi|^2)}d\xi\right| 
\\
&\quad+ \left|\frac{1}{(2\pi)^d}\int_{|\xi|^2\leq\phi^{-1}(t^{-1})} e^{i\xi\cdot x} e^{-t\phi(|\xi|^2)}d\xi \right|
\\
&\leq \frac{1}{(2\pi)^d} \int_{|\xi|^2>\phi^{-1}(t^{-1})} e^{-c\left(\frac{|\xi|^2}{\phi^{-1}(t^{-1})}\right)^{\delta_0}} d\xi + \frac{1}{(2\pi)^d} \int_{|\xi|^2\leq\phi^{-1}(t^{-1})} d\xi
\\
&=N \left(\phi^{-1}(t^{-1})\right)^{d/2}\int_{|\xi|^2>1} e^{-c|\xi|^{2\delta_0}} d\xi + N\left(\phi^{-1}(t^{-1})\right)^{d/2}
\\
&\leq N\left(\phi^{-1}(t^{-1})\right)^{d/2}.
\end{aligned}
$$
The lemma is proved.
\end{proof}

\begin{remark}
\label{remark heat}
(i) Inequality \eqref{pestimate ineq}  was introduced in \cite[Corollary 3.5]{kim2013parabolic}  on  finite time interval $[0,T]$ with a constant $N$ depending also on $T$. Condition (H1) is used in \cite[Corollary 3.5]{kim2013parabolic}.

(ii) Assume that there exist constants $0<\delta_1\leq \delta_2<1$ and $c_1,c_2>0$ such that
\begin{equation}
\label{eqn 7.21.1}
c_1\left(\frac{R}{r}\right)^{\delta_1}\leq\frac{\phi(R)}{\phi(r)}\leq c_2\left(\frac{R}{r}\right)^{\delta_2}, \qquad 0<r<R<\infty.
\end{equation}
 Then, by \cite[Section 3]{kim2014global}, we have the sharp two-sided estimate
\begin{align} \label{special sharp}
p(t,x) \sim \left(\left(\phi^{-1}(t^{-1})\right)^{d/2}\wedge t\frac{\phi(|x|^{-2})}{|x|^d}\right).
\end{align} 
Note that  \eqref{eqn 7.21.1}  is stronger than  Assumption \ref{ass bernstein}. It turns out that  Assumption \ref{ass bernstein} is insufficient  for  equivalence relation \eqref{special sharp}. See  \cite[Theorem 4.1]{chen2012sharp}.
\end{remark}

The following result is a consequence of \eqref{eqn 7.20.2} and Lemma \ref{pestimate}.

\begin{lemma} \label{pderivativeestimate}
Let Assumption \ref{ass bernstein} hold. Then there exists a constant $N=N(d,c,\delta_0,m)$ so that for any $(t,x)\in (0,\infty)\times \bR^d$,
\begin{align*}
& |D^m_x p_d(t,x)| \leq N \sum_{m-2n\geq0,n\in\bN_0} |x|^{m-2n}\left((\phi^{-1}(t^{-1}))^{d/2+m-n}\wedge t\frac{\phi(|x|^{-2})}{|x|^{d+2(m-n)}}\right).
\end{align*}

\begin{proof}
For $n\in \bN$, let $0_{2n}$ denote the origin in $\bR^{2n}$. Thus, for $x\in \bR^d$ we have $(x,0_{2n})\in \bR^{d+2n}$. 
By the dominated convergence theorem,
\begin{align*}
\partial_{x^i} p_d(t,x)=& \int_{(0,\infty)}(4\pi s)^{-d/2} \partial_{x^i}\exp\left(-\frac{|x|^2}{4s}\right)\eta_t(ds)
\\
=& -\frac{x^i}{2} \int_{(0,\infty)} s^{-1}(4\pi s)^{-d/2} \exp\left(-\frac{|x|^2}{4s}\right)\eta_t(ds)
\\
=& -2\pi x^i p_{d+2}(t,(x,0_2)).
\end{align*}
The last equality is due to  \eqref{eqn 7.20.2}. 

Similarly,
\begin{align*}
\partial_{x^i x^i} p_d(t,x)=4\pi^2 (x^i)^2 p_{d+4}(t,(x,0_4))-2\pi p_{d+2}(t,(x,0_2)),
\end{align*}
and, for $i\neq j$,
$$
\partial_{x^i x^j} p_d(t,x)=4\pi^2 x^ix^j p_{d+4}(t,(x,0_4)).
$$
Repeating the product rule of differentiation (or using the induction argument), one can check that $D^m_xp_d(t,x)$ is a sum of functions of the type 
$$
x^{\sigma} p_{d+2(m-n)}(t,(x,0_{2(m-n)})), \quad \quad  |\sigma|=m-2n\geq 0.
$$
Thus we get the claim of the lemma by applying \eqref{pestimate ineq} for dimensions $d+2,\dots,d+2m$.
\end{proof}

\end{lemma}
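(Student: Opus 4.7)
The plan is to exploit the subordinated representation \eqref{eqn 7.20.2} together with a dimension-raising identity for the Gaussian kernel, and thereby reduce everything to the pointwise bound from Lemma \ref{pestimate}. Concretely, I observe that for the Gaussian in $\bR^d$,
$$
\frac{1}{4\pi s}(4\pi s)^{-d/2}e^{-|x|^2/(4s)}=(4\pi s)^{-(d+2)/2}e^{-|x|^2/(4s)},
$$
so integrating against $\eta_t(ds)$ yields
$$
\int_{(0,\infty)}\frac{1}{s}(4\pi s)^{-d/2}e^{-|x|^2/(4s)}\,\eta_t(ds)=4\pi\,p_{d+2}(t,(x,0_2)),
$$
where $(x,0_2)\in\bR^{d+2}$. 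Differentiating \eqref{eqn 7.20.2} under the integral (justified by dominated convergence, using the Gaussian decay in $|x|^2/s$) then gives the base identity
$$
\partial_{x^i}p_d(t,x)=-2\pi x^i\,p_{d+2}(t,(x,0_2)).
$$

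Next, I would argue by induction on $m$ using the Leibniz rule that $D_x^m p_d(t,x)$ is a finite linear combination of terms of the form $c_{\sigma,n}\,x^\sigma p_{d+2(m-n)}(t,(x,0_{2(m-n)}))$ with $|\sigma|=m-2n$ and $0\leq n\leq \lfloor m/2\rfloor$. In the inductive step, applying $\partial_{x^j}$ to such a term produces either (a) the derivative of $x^\sigma$, leaving the kernel dimension unchanged (which corresponds to $m\mapsto m+1$, $n\mapsto n+1$, $|\sigma|\mapsto|\sigma|-1$), or (b) the derivative of the kernel, which by the base identity introduces an extra $x^j$ and raises the kernel dimension by two ($m\mapsto m+1$, $n\mapsto n$, $|\sigma|\mapsto|\sigma|+1$). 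In both cases the relation $|\sigma|=m-2n$ is preserved.

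Finally, for each term in the decomposition I apply Lemma \ref{pestimate} in dimension $d+2(m-n)$ at the point $(x,0_{2(m-n)})$, whose Euclidean norm is still $|x|$, to obtain
$$
p_{d+2(m-n)}(t,(x,0_{2(m-n)}))\leq N\left((\phi^{-1}(t^{-1}))^{d/2+m-n}\wedge t\,\frac{\phi(|x|^{-2})}{|x|^{d+2(m-n)}}\right).
$$
Combining with $|x^\sigma|\leq|x|^{m-2n}$ and summing over the finitely many pairs $(\sigma,n)$ yields the stated inequality. The main difficulty is purely combinatorial, namely verifying that the induction truly preserves the index relation $|\sigma|=m-2n$ and keeps track of the correct kernel dimension; once this bookkeeping is in place, the bound is a direct consequence of Lemma \ref{pestimate} applied in the higher-dimensional Gaussian representation.
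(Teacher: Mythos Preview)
Your proposal is correct and follows essentially the same approach as the paper: the subordinated representation \eqref{eqn 7.20.2} is differentiated under the integral to obtain the dimension-raising identity $\partial_{x^i}p_d=-2\pi x^i p_{d+2}$, an induction via the Leibniz rule shows $D^m_x p_d$ is a finite combination of terms $x^\sigma p_{d+2(m-n)}(t,(x,0_{2(m-n)}))$ with $|\sigma|=m-2n$, and Lemma \ref{pestimate} in dimension $d+2(m-n)$ finishes the estimate. Your inductive bookkeeping of the two Leibniz cases is in fact slightly more explicit than what the paper records.
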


Next we study  the fundamental solution $q(t,x)$ to the equation
\begin{equation}
        \label{eqn 7.20.5}
\partial_{t}^{\alpha}u(t,x)=\phi(\Delta) u(t,x), \quad t>0\, ;\, u(0,x)=u_0.
\end{equation}
That is, $q(t,x)$ is the function such that under appropriate smoothness condition on $u_0$, the function $u(t,x):=(q(t,\cdot)\ast u_0(\cdot))(x)$ satisfies equation \eqref{eqn 7.20.5}.

\vspace{1mm}

Recall that $X_t$ is the $d$-dimensional subordinate Brownian motion with transition density  $p(t,x)$. Let $Q_t$ be an increasing L\'evy process independent of $X_t$ having the Laplace transform
\begin{equation*}
\bE \exp(-\lambda Q_t)= \exp(-t\lambda^\alpha).
\end{equation*}
Let 
$$
R_t:=\inf\{s>0 : Q_s>t\}
$$ be the inverse process of the  subordinator $Q_{t}$, and let  $\varphi(t,r)$ denote the probability density function of $R_t$. Then,  as is shown in   Lemma \ref{zero converge} (cf. \cite[Theorem 1.1]{chen2017time}), the function
\begin{eqnarray}
q(t,x):=\int_0^\infty p(r,x)d_r \bP(R_t\leq r)
=\int_0^\infty p(r,x) \varphi(t,r) \,dr   \label{eqn 8.1.1}
\end{eqnarray}
becomes the fundamental solution to  equation \eqref{eqn 7.20.5}.   Actually the definition of  $q(t,x)$ implies that $q(t,x)$ is the transition density of $Y_t:=X_{R_t}$, which is called subordinate Brownian motion delayed by an inverse subordinator.

Let $E_{\alpha,\beta}$ be the two-parameter Mittag-Leffler function defined as
\begin{equation*}
E_{\alpha,\beta}(z)=\sum_{k=0}^\infty \frac{z^k}{\Gamma(\alpha k+\beta)}, \quad z\in\bC, \alpha>0, \beta\in\bC,
\end{equation*}
and $E_{\alpha}:=E_{\alpha,1}$. For later use, we  note the recurrence relation which follows immediately from the definition:
\begin{equation} \label{Mittag recurr}
E_{\alpha,\beta}(z)=\frac{1}{\Gamma(\beta)} + zE_{\alpha,\beta+\alpha}(z),
\end{equation}
where $ \frac{1}{\Gamma(\beta)}:=0$ if $\beta=0,-1,-2,\cdots$.

For $\beta\in \bR$, denote
$$
 \varphi_{\alpha,\beta}(t,r):=D_t^{\beta-\alpha}\varphi(t,r):=(D^{\beta-\alpha}_t \varphi(\cdot,r))(t),
 $$
and for  $(t,x)\in(0,\infty)\times \bR^{d}\setminus\{0\}$ define
\begin{equation*}
q_{\alpha,\beta}(t,x):=\int_0^\infty p(r,x)\varphi_{\alpha,\beta}(t,r) dr.
\end{equation*}

 \begin{lemma}
 \label{lem 7.21.1}
 Let $\alpha\in (0,1)$ and $\beta\in \bR$.
 
  (i) $\varphi_{\alpha,\beta}(t,r)$ has the series repesentation
  \begin{eqnarray}
\varphi_{\alpha,\beta}(t,r)=&t^{-\beta}\sum_{k=0}^\infty \frac{(-rt^{-\alpha})^k}{k!\Gamma(1-\beta-\alpha k)}=t^{-\beta} {_0\Phi_1}\left[\begin{array}{cc}
 - \\
(1-\beta,-\alpha)
\end{array}\Bigg| -rt^{-\alpha}\right],   \label{eqn 7.21.5}
\end{eqnarray}
where ${_0\Phi_1}\left[\begin{array}{cc}
 - \\
(a,b)
\end{array}\Bigg| z\right]:=\sum_{k=0}^{\infty}  \frac{z^k}{k!\Gamma(a+bk)}$ is the Wright function.

(ii) There exist  constants $c,N>0$ depending only on $\alpha,\beta$ such that 
\begin{equation} \label{philarge}
|\varphi_{\alpha,\beta}(t,r)|\leq N t^{-\beta}e^{-c(rt^{-\alpha})^{1/(1-\alpha)}}
\end{equation}
for $rt^{-\alpha}\geq 1$, and 
\begin{eqnarray}\label{betainteger}
|\varphi_{\alpha,\beta}(t,r)|\leq \left\{ \begin{array}{ll}  N rt^{-\alpha-\beta}~&\beta\in \bN \\ Nt^{-\beta}~&\beta \notin \bN\end{array} \right.
\end{eqnarray}
for $rt^{-\alpha}\leq 1$.  

(iii) For  any $(t,x)\in(0,\infty)\times \bR^{d}\setminus\{0\}$, 
\begin{equation} \label{eqn 07.28 20:25}
D_t^{\beta-\alpha}q(t,x)=q_{\alpha,\beta}(t,x).
\end{equation}

(iv) For any $t>0, \xi\in \bR^d$,
\begin{equation} \label{fourier of q}
\hat{q}_{\alpha,\beta}(t,\xi) =t^{\alpha-\beta} E_{\alpha,1-\beta+\alpha}(-t^\alpha\phi(|\xi|^2)).
\end{equation}

 \end{lemma}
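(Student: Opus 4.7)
The plan is to first obtain the series representation in (i) by Laplace inversion, then derive the pointwise bounds (ii) from this series together with a contour-integral representation, and finally deduce (iii) and (iv) by routine interchange of derivatives, integrals, and sums.

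For (i), I would start with the Laplace identity
\[
\int_0^\infty e^{-st}\varphi(t,r)\,dt = s^{\alpha-1}e^{-rs^\alpha},
\]
which follows from $\bP(R_t\le r)=\bP(Q_r\ge t)$ and the Laplace exponent $\bE\,e^{-\lambda Q_r}=e^{-r\lambda^\alpha}$. Expanding $e^{-rs^\alpha}$ in its Taylor series and inverting term by term via $\cL^{-1}(s^{-\gamma})(t)=t^{\gamma-1}/\Gamma(\gamma)$ (with the convention $1/\Gamma(n)=0$ for $n\in-\bN_0$) yields
\[
\varphi(t,r)=t^{-\alpha}\sum_{k=0}^\infty\frac{(-rt^{-\alpha})^k}{k!\,\Gamma(1-\alpha-\alpha k)},
\]
which is the $\beta=\alpha$ case of \eqref{eqn 7.21.5}. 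Convergence for all $rt^{-\alpha}>0$ follows from Stirling's formula after rewriting $1/\Gamma(1-\alpha-\alpha k)$ via the reflection formula. To treat general $\beta$, I would apply $D_t^{\beta-\alpha}$ termwise using $D_t^\gamma t^s=\Gamma(s+1)t^{s-\gamma}/\Gamma(s+1-\gamma)$; the factor $\Gamma(1-\alpha-\alpha k)$ cancels and \eqref{eqn 7.21.5} drops out.

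For (ii), the small-argument bound \eqref{betainteger} is immediate from the series: for $rt^{-\alpha}\le 1$ the tail is absolutely summable by the convergence estimate above, so $|\varphi_{\alpha,\beta}(t,r)|\le Nt^{-\beta}$, and when $\beta\in\bN$ the $k=0$ term vanishes ($1/\Gamma(1-\beta)=0$), yielding the extra factor $rt^{-\alpha}$. The large-argument bound \eqref{philarge} is the main technical obstacle, since the series is alternating and term-by-term estimation only gives the wrong sign in the exponent. I would instead work from the Bromwich representation
\[
\varphi_{\alpha,\beta}(t,r)=\frac{1}{2\pi i}\int_{\mathrm{Br}} e^{st}\,s^{\beta-1}e^{-rs^\alpha}\,ds,
\]
deform the contour onto a Hankel path wrapping the branch cut on $(-\infty,0]$, and apply the method of steepest descent. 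The phase $st-rs^\alpha$ has a saddle at $s_\ast=(\alpha r/t)^{1/(1-\alpha)}$, and evaluating the phase there produces the exponent $-c(rt^{-\alpha})^{1/(1-\alpha)}$ of \eqref{philarge}; the factor $s^{\beta-1}$ together with the Gaussian width at the saddle contributes only a polynomial prefactor which is absorbed into $Nt^{-\beta}$.

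For (iii), I would apply $D_t^{\beta-\alpha}$ to $q(t,x)=\int_0^\infty p(r,x)\varphi(t,r)\,dr$ and push the operator inside. Writing $D_t^{\beta-\alpha}$ as an integer-order derivative composed with a Riemann--Liouville integral, the exchange is justified by Fubini together with the bounds from (ii) and Lemma \ref{pestimate} (the latter controls $p(r,x)$ both as $r\to 0^+$ and $r\to\infty$), giving \eqref{eqn 07.28 20:25}. For (iv), since $\hat p(r,\xi)=e^{-r\phi(|\xi|^2)}$, Fubini yields
\[
\hat q_{\alpha,\beta}(t,\xi)=\int_0^\infty e^{-r\phi(|\xi|^2)}\varphi_{\alpha,\beta}(t,r)\,dr=D_t^{\beta-\alpha}\int_0^\infty e^{-r\phi(|\xi|^2)}\varphi(t,r)\,dr,
\]
and the inner integral equals $E_\alpha(-t^\alpha\phi(|\xi|^2))$ by the Laplace identity for $\varphi$ and the series definition of $E_\alpha$. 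Applying $D_t^{\beta-\alpha}$ to the Mittag--Leffler series termwise via $D_t^\gamma t^{\alpha k}=\Gamma(\alpha k+1)t^{\alpha k-\gamma}/\Gamma(\alpha k+1-\gamma)$ then produces $t^{\alpha-\beta}E_{\alpha,1-\beta+\alpha}(-t^\alpha\phi(|\xi|^2))$, establishing \eqref{fourier of q}. The saddle-point analysis in (ii) is the only nontrivial step; everything else reduces to careful bookkeeping with the gamma function and standard Fubini arguments based on the bounds proved in (ii).
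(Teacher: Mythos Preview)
Your proposal is correct and follows essentially the same route as the paper. The only difference is one of presentation: where the paper outsources the key identities and asymptotics to the literature---Kochubei for the Wright-function form of $\varphi$, Kilbas--Koroleva--Rogosin for passing $D_t^{\beta-\alpha}$ through the series, Wright and Braaksma for the large-argument exponential decay of ${}_0\Phi_1$, and Gorenflo--Luchko--Mainardi for the Laplace transform in (iv)---you sketch these computations directly (Laplace inversion term by term, termwise fractional differentiation, and a saddle-point analysis of the Bromwich integral). These are precisely the arguments underlying the cited results, so the two proofs coincide in substance; yours is more self-contained, the paper's is shorter.
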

 
 \begin{proof}
(i)  By \cite[Proposition 1(a)]{bingham1971} (or \cite[Theorem 4.3]{bondesson1996}, \cite[Remark 3.1]{meerschaert2004limit}) 
 $$
 \bE e^{-s R_t  }=\int_{(0,\infty)}e^{-s r} \varphi(t,r)dr=\sum_{k=0} \frac{(-st^{\alpha})^k}{\Gamma(\alpha k+1)}=E_{\alpha}(-st^{\alpha}).
 $$
 Formula (3.16) in  \cite{kochubei2012fractional}  and the equality below (3.17) in  \cite{kochubei2012fractional} lead to the following   Wright function representation
\begin{equation*}
\varphi(t,r)=t^{-\alpha} {_0\Phi_1}\left[\begin{array}{cc}
 - \\
(1-\alpha,-\alpha)
\end{array}\Bigg| -rt^{-\alpha}\right] := t^{-\alpha}\sum_{k=0}^\infty \frac{(-rt^{-\alpha})^k}{k!\Gamma(1-\alpha-\alpha k)}.
\end{equation*}
Therefore, \eqref{eqn 7.21.5}   follows from 
 \cite[Theorem 3.1]{kilbas2013multi} if $\beta\leq \alpha$ and  from  \cite[Theorem 3.5]{kilbas2013multi}  if $\beta\geq \alpha$.

\vspace{1mm}

(ii) If $rt^{-\alpha}\geq 1$, then \eqref{philarge} is a consequence of the asymptotic behavior of the Wright function (see e.g. \cite[Theorem 1]{wright1940generalized} or \cite[Theorem 25]{braaksma1936asymptotic}).

Also, if $rt^{-\alpha}\leq 1$,  then the series representation of $\varphi_{\alpha,\beta}(t,r)$ easily yields \eqref{betainteger}.  

\vspace{1mm}

(iii)  By Lemma \ref{pestimate},
\begin{equation}
	\label{eqn 07.28.22:31}
 |p(r,x)| \leq N(d,c,\delta_0,x) r.
\end{equation}
This,  \eqref{philarge} and \eqref{betainteger} easily yield 
\begin{equation} \label{q well def}
\int_0^\infty |p(r,x) \varphi_{\alpha,\beta}(t,r)| dr \leq N \int_0^\infty r \, |\varphi_{\alpha,\beta}(t,r)| dr< \infty.
\end{equation}

Now we prove \eqref{eqn 07.28 20:25}. First assume $\beta < \alpha$. Then, since $\varphi(t,\cdot)\geq 0$,  \eqref{q well def}  implies
$$
\int_0^\infty \int_0^t (t-s)^{\alpha-\beta-1} \varphi(s,r) |p(r,x)| ds dr <\infty,
$$
and therefore  Fubini's theorem yields the desired result.

Next we assume $\beta > \alpha$. Take $n\in\bN$ such that $n-1\leq \beta-\alpha<n$. By the above result, we have
$$
I^{n-(\beta-\alpha)}_{t} q (t,x) =D^{(\beta-n)-\alpha}_{t}q(t,x)= q_{\alpha,\beta-n} (t,x).
$$
Hence, by the definition of fractional derivative, it remains to prove that
\begin{equation*}
	\label{eqn 07.28.22:55}
\left(\frac{d}{dt}\right)^n q_{\alpha,\beta-n} (t,x)=q_{\alpha,\beta}(t,x).
\end{equation*}
For this, due to  the dominated convergence theorem, we only need to show for any $0<\varepsilon<T$  and $k=0,1,2,\dots,n$,
\begin{equation}
	\label{eqn 07.28.23:04}
\int_0^\infty \left|p(r,x)\right| \times \left( \sup_{t\in[\varepsilon,T]} |\varphi_{\alpha,\beta-k}(t,r)| \right) dr <\infty.
\end{equation}
For $t\in[\varepsilon,T]$, by \eqref{philarge} and \eqref{betainteger}, there exist $c,N>0$ such that 
 $$
|\varphi_{\alpha,\beta-k}(t,r)|\leq Ne^{-cr^{1/(1-\alpha)}} \quad \text{if} \quad r\geq \varepsilon^\alpha,
 $$
and
 $$
|\varphi_{\alpha,\beta-k}(t,r)|\leq N \quad \text{if} \quad r\leq T^\alpha,
 $$
 where constants $c,N$ depend on $\alpha,\beta,k,\varepsilon$ and $T$. Combining this with \eqref{eqn 07.28.22:31} we have \eqref{eqn 07.28.23:04}. The claim of (iii) is proved.

(iv) By  (34) of \cite{gorenflo2007analytical}, we have
\begin{align} \label{Laplace trans}
\int_0^\infty e^{-sr}\varphi_{\alpha,\beta}(t,r)dr = t^{\alpha-\beta} E_{\alpha,1-\beta+\alpha}(-st^\alpha).
\end{align}
Therefore, by \eqref{Laplace trans} and Fubini's theorem 
\begin{align*} 
\hat{q}_{\alpha,\beta}(t,\xi) &= \int_0^\infty \varphi_{\alpha,\beta}(t,r)\left[\int_{\bR^d}e^{-ix\cdot \xi} p(r,x)dx \right] dr \nonumber
\\
&=\int_0^\infty \varphi_{\alpha,\beta}(t,r)e^{-r\phi(|\xi|^2)} dr \nonumber
\\
&=t^{\alpha-\beta} E_{\alpha,1-\beta+\alpha}(-t^\alpha\phi(|\xi|^2)).
\end{align*}
The lemma is proved.
\end{proof}

\begin{lemma} \label{qestimatebeta123}
Let $\alpha\in(0,1)$ $\beta\in\bR$, and $m\in\bN_{0}$.  Then
there exists a constant $N=N(\alpha,\beta, d, c, \delta_0 , m)$ such that 

(i)  for any $t>0, x\in \bR^d\setminus\{0\}$
\begin{equation}
\label{eqn 7.21.7}
 |D^m_x q_{\alpha,\beta} (t,x)|\leq Nt^{2\alpha-\beta}\frac{\phi(|x|^{-2})}{|x|^{d+m}}, 
\end{equation}

(ii) furthermore,  if $t^\alpha \phi(|x|^{-2})\geq1$ then 
\begin{align*}
&|D^m_x q_{\alpha,\beta} (t,x)| \leq N \int_{(\phi(|x|^{-2}))^{-1}}^{2t^{\alpha}} (\phi^{-1}(r^{-1}))^{(d+m)/2} rt^{-\alpha-\beta} dr, \qquad \beta\in\bN,
\end{align*}
and
\begin{align*}
&|D^m_x q_{\alpha,\beta} (t,x)| \leq N \int_{(\phi(|x|^{-2}))^{-1}}^{2t^{\alpha}} (\phi^{-1}(r^{-1}))^{(d+m)/2} t^{-\beta} dr, \qquad \beta\notin\bN.
\end{align*}
\end{lemma}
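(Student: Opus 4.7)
The plan is to differentiate under the integral sign in $q_{\alpha,\beta}(t,x)=\int_0^\infty p(r,x)\varphi_{\alpha,\beta}(t,r)\,dr$, which is justified for $x\neq 0$ by combining Lemma~\ref{pderivativeestimate} with the decay bounds \eqref{philarge}--\eqref{betainteger}, and then to combine those pointwise bounds in the sharpest possible way. The crucial observation is that the crossover scale $r_0:=1/\phi(|x|^{-2})$ is the exact scale at which the minimum in the bound of Lemma~\ref{pderivativeestimate} switches regimes: for $r\leq r_0$, the second argument dominates and the algebraic identity $|x|^{m-2n}\cdot r\phi(|x|^{-2})/|x|^{d+2(m-n)}=r\phi(|x|^{-2})/|x|^{d+m}$ is $n$-independent, giving $|D_x^m p(r,x)|\leq N r\phi(|x|^{-2})/|x|^{d+m}$; for $r\geq r_0$, setting $s:=\phi^{-1}(r^{-1})$ we have $s|x|^2\leq 1$, and a parity splitting of $m-2n$ writes $|x|^{m-2n}s^{d/2+m-n}$ as $(s|x|^2)^j s^{(d+m)/2}$ or $(s|x|^2)^j|x|s^{(d+m+1)/2}$ with $j\geq 0$, each bounded by $s^{(d+m)/2}$, giving $|D_x^m p(r,x)|\leq N(\phi^{-1}(r^{-1}))^{(d+m)/2}$.

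For part (i) the first bound is in fact valid for all $r>0$ (the minimum is always at most its second argument), so it suffices to estimate $\int_0^\infty r|\varphi_{\alpha,\beta}(t,r)|\,dr$. Split at $r=t^\alpha$: on $[0,t^\alpha]$ the bound \eqref{betainteger} yields $\int_0^{t^\alpha} r^2 t^{-\alpha-\beta}\,dr$ when $\beta\in\bN$ or $\int_0^{t^\alpha} r t^{-\beta}\,dr$ otherwise, each of order $t^{2\alpha-\beta}$; on $[t^\alpha,\infty)$ the change of variables $u=rt^{-\alpha}$ together with \eqref{philarge} reduces the tail to $N t^{2\alpha-\beta}\int_1^\infty u\,e^{-cu^{1/(1-\alpha)}}\,du<\infty$. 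Multiplying by $\phi(|x|^{-2})/|x|^{d+m}$ gives \eqref{eqn 7.21.7}. For part (ii) the hypothesis $t^\alpha\phi(|x|^{-2})\geq 1$ means $r_0\leq t^\alpha$, so I split the integral into $[0,r_0]$, $[r_0,2t^\alpha]$, $[2t^\alpha,\infty)$. The middle interval produces the stated bound directly (on $[t^\alpha,2t^\alpha]$ the bound $|\varphi_{\alpha,\beta}|\leq N t^{-\beta}$ from \eqref{philarge} is $\leq N r t^{-\alpha-\beta}$ when $\beta\in\bN$ because $r\geq t^\alpha$). The near-zero piece evaluates to at most $N r_0^2 |x|^{-(d+m)}t^{-\alpha-\beta}$ (or $N r_0 |x|^{-(d+m)}t^{-\beta}$), using $\phi(|x|^{-2})=r_0^{-1}$; the tail, after the substitution $u=rt^{-\alpha}$ and the bound $\phi^{-1}((ut^\alpha)^{-1})\leq\phi^{-1}(t^{-\alpha})/u$ for $u\geq 1$ (a consequence of the upper estimate $\phi(R)/\phi(r)\leq R/r$ in \eqref{phiratio}), reduces to $N t^{\alpha-\beta}(\phi^{-1}(t^{-\alpha}))^{(d+m)/2}$. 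Each of these is absorbed by comparing with the middle integral restricted to the subintervals $[r_0,2r_0]$ and $[t^\alpha,2t^\alpha]$ respectively: on those subintervals Assumption~\ref{ass bernstein} (the lower estimate in \eqref{phiratio}) forces $(\phi^{-1}(r^{-1}))^{(d+m)/2}\geq c|x|^{-(d+m)}$ and $\geq c(\phi^{-1}(t^{-\alpha}))^{(d+m)/2}$ respectively, so the middle integral majorizes each endpoint contribution up to an absolute constant.

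The main technical obstacle is the bookkeeping needed to collapse the sum over $n$ in Lemma~\ref{pderivativeestimate} in the regime $r\geq r_0$: the parity of $m-2n$ forces two cases, and in each one must exploit $s|x|^2\leq 1$ to discard the surplus $|x|$ and $s$ factors without producing the wrong power of $\phi^{-1}(r^{-1})$. Everything else is an endpoint-absorption maneuver whose only nontrivial input is the two-sided scaling estimate \eqref{phiratio} for $\phi$ and its inverse, applied to doubling intervals at the endpoints of $[r_0,2t^\alpha]$.
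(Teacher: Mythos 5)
Your proposal is correct and follows essentially the same route as the paper's proof: differentiate under the integral using Lemma \ref{pderivativeestimate} and \eqref{q well def}, collapse the sum over $n$ via the second argument of the minimum for $r\leq (\phi(|x|^{-2}))^{-1}$ and via $\phi^{-1}(r^{-1})|x|^{2}\leq 1$ for larger $r$, and then absorb the near-zero and tail contributions into the stated integral by comparing with the doubling subintervals $[(\phi(|x|^{-2}))^{-1},2(\phi(|x|^{-2}))^{-1}]$ and $[t^{\alpha},2t^{\alpha}]$ using \eqref{phiratio}. The only differences are cosmetic (your middle interval is $[(\phi(|x|^{-2}))^{-1},2t^{\alpha}]$ rather than the paper's split at $t^{\alpha}$, and you make the parity bookkeeping explicit), so no further comment is needed.
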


\begin{proof}
Take $x\in \bR^d\setminus\{0\}$. Then for any $r>0$ and  $y\neq 0$ sufficiently close to $x$, we have
$$
 |D^{\sigma}p(r,y)| \leq N(\phi,x,d,m) r, \quad |\sigma|\leq m
$$
due to Lemma \ref{pderivativeestimate}. Using \eqref{q well def} and the dominated convergence theorem, we get
\begin{equation}
\label{eqn 7.20.6}
D^m_x q_{\alpha,\beta} (t,x)=\int_0^\infty D^m_x p(r,x)\varphi_{\alpha,\beta}(t,r) dr.
\end{equation}

For $n\leq m/2$, set
\begin{equation}
   \label{eqn 727.1}
P_{m,n}(r,x) := |x|^{m-2n}\left((\phi^{-1}(r^{-1}))^{d/2+m-n}\wedge r\frac{\phi(|x|^{-2})}{|x|^{d+2(m-n)}}\right).
\end{equation}
By Lemma \ref{pderivativeestimate} and \eqref{eqn 7.20.6}, to estimate $|D^m_x q_{\alpha,\beta} (t,x)|$,  it is enough to control
\begin{align*}
\int_0^{\infty} |P_{m,n}(r,x)\varphi_{\alpha,\beta}(t,r)| dr=I+II,
\end{align*}
where
\begin{align}
I:=\int_0^{t^\alpha} |P_{m,n}(r,x)\varphi_{\alpha,\beta}(t,r)| dr,\quad II:=\int_{t^\alpha}^\infty |P_{m,n}(r,x)\varphi_{\alpha,\beta}(t,r)| dr. \label{eqn 7.27.4}
\end{align}

(i) We  prove \eqref{eqn 7.21.7} following  the argument in the proof of \cite[Theorem 1.4]{chen2020time}. 
Note that $|\varphi_{\alpha,\beta}(t,r)|\leq N t^{-\beta}$ for $r\leq t^\alpha$ due to \eqref{betainteger}.  Thus,
\begin{align*}
I&\leq N\int_0^{t^\alpha} rt^{-\beta} \frac{\phi(|x|^{-2})}{|x|^{d+m}}dr
\\
&\leq N t^{2\alpha-\beta}\frac{\phi(|x|^{-2})}{|x|^{d+m}}.
\end{align*}
For $II$, by \eqref{philarge} and the change of variables,
\begin{align*}
II&\leq N t^{-\beta} \int_{t^\alpha}^\infty  r\frac{\phi(|x|^{-2})}{|x|^{d+m}}e^{-c(rt^{-\alpha})^{1/(1-\alpha)}} dr
\\
&= N t^{2\alpha-\beta} \frac{\phi(|x|^{-2})}{|x|^{d+m}} \int_1^\infty re^{-cr^{1/(1-\alpha)}}dr
\\
&\leq N t^{2\alpha-\beta}\frac{\phi(|x|^{-2})}{|x|^{d+m}}.
\end{align*}
Hence, \eqref{eqn 7.21.7} is proved. 

(ii) We follow the argument in the proof of \cite[Theorem 4.8]{chen2020time}.

As in the proof of  (i), we estimate  $I$ and $II$ which are defined in \eqref{eqn 7.27.4}. 
 First, assume  $\beta\in\bN$. From \eqref{betainteger} we have
$$
I\leq N \int_0^{t^\alpha} |P_{m,n}(r,x)|rt^{-\alpha-\beta} dr=:I_1+I_2,$$
where
\begin{align*}
I_{1}:=\int_0^{(\phi(|x|^{-2}))^{-1}}|P_{m,n}(r,x)|rt^{-\alpha-\beta}dr,\quad I_{2}:=\int_{(\phi(|x|^{-2}))^{-1}}^{t^\alpha} |P_{m,n}(r,x)|rt^{-\alpha-\beta}dr.
\end{align*}
By \eqref{eqn 727.1},
\begin{align} \label{I_1 estimate 1}
I_1\leq N \int_0^{(\phi(|x|^{-2}))^{-1}} r^2t^{-\alpha-\beta} \frac{\phi(|x|^{-2})}{|x|^{d+m}}dr
\leq N t^{-\alpha-\beta}\frac{(\phi(|x|^{-2}))^{-2}}{|x|^{d+m}}.
\end{align}
Note that if 
$$
\left(\phi(|x|^{-2})\right)^{-1} \leq r\leq 2\left(\phi(|x|^{-2})\right)^{-1},
$$
then
$$
\phi^{-1}(r^{-1}) \leq |x|^{-2}\leq  \phi^{-1}(2r^{-1}).
$$
Applying \eqref{phiratio} with $R=\phi^{-1}(2r^{-1})$ and $r=\phi^{-1}(r^{-1})$ we have
\begin{align} \label{I_1 estimate 2}
\phi^{-1}(r^{-1}) \leq |x|^{-2}\leq  N(c, \delta_0) \phi^{-1}(r^{-1}).
\end{align}
Therefore, by \eqref{I_1 estimate 1} and \eqref{I_1 estimate 2}
\begin{align}
	\label{eqn 07.27.22.46} 
I_{1}\leq N t^{-\alpha-\beta}\frac{(\phi(|x|^{-2}))^{-2}}{|x|^{d+m}} &\leq N \int_{(\phi(|x|^{-2}))^{-1}}^{2(\phi(|x|^{-2}))^{-1}} (\phi^{-1}(r^{-1}))^{(d+m)/2} rt^{-\alpha-\beta} dr \nonumber
\\
&\leq N \int_{(\phi(|x|^{-2}))^{-1}}^{2t^{\alpha}} (\phi^{-1}(r^{-1}))^{(d+m)/2} rt^{-\alpha-\beta} dr.
\end{align}

To estimate $I_2$, we note that since $\phi$ and $\phi^{-1}$ are increasing, $r\phi(|x|^{-2})\geq1$ implies
\begin{equation} \label{wedgepoint}
|x|^{2} \leq \left(\phi^{-1}(r^{-1})\right)^{-1}.
\end{equation}
Applying \eqref{wedgepoint} and \eqref{eqn 727.1} again,
\begin{align}
	\label{eqn 07.27.23:07} 
I_2 &\leq N |x|^{m-2n}\int_{(\phi(|x|^{-2}))^{-1}}^{2t^\alpha} (\phi^{-1}(r^{-1}))^{d/2+m-n} rt^{-\alpha-\beta} dr \nonumber
\\
&\leq N \int_{(\phi(|x|^{-2}))^{-1}}^{2t^{\alpha}} (\phi^{-1}(r^{-1}))^{(d+m)/2} rt^{-\alpha-\beta} dr.
\end{align}
Now we estimate $II$. By (\ref{phiratio}) with $R=\phi^{-1}(t^{-\alpha})$ and $r=\phi^{-1}(r^{-1})$ we find that
\begin{align*}
\phi^{-1}(r^{-1})\leq t^\alpha r^{-1} \phi^{-1}(t^{-\alpha})\quad \text{if} \quad t^\alpha \leq r.
\end{align*}
Therefore, by the change of variables $r \to t^{\alpha}r$,
\begin{align} \label{I_2 estimate 1}
II &\leq N |x|^{m-2n}\int_{t^\alpha}^\infty (\phi^{-1}(r^{-1}))^{d/2+m-n} t^{-\beta}e^{-c(rt^{-\alpha})^{1/(1-\alpha)}} dr \nonumber
\\
&\leq N |x|^{m-2n} \int_{t^\alpha}^\infty \left(t^\alpha r^{-1} \phi^{-1}(t^{-\alpha})\right)^{d/2+m-n} t^{-\beta}e^{-c(rt^{-\alpha})^{1/(1-\alpha)}} dr \nonumber
\\
&= N |x|^{m-2n} t^{\alpha-\beta} (\phi^{-1}(t^{-\alpha}))^{d/2+m-n} \int_1^\infty  r^{-d/2-m+n} e^{cr^{1/(1-\alpha)}} dr \nonumber
\\
&\leq N |x|^{m-2n} t^{\alpha-\beta} (\phi^{-1}(t^{-\alpha}))^{d/2+m-n} \nonumber
\\
&\leq N t^{\alpha-\beta} (\phi^{-1}(t^{-\alpha}))^{(d+m)/2}.
\end{align}
For the last inequality above, we used $t^\alpha \phi(|x|^{-2})\geq1$. 

Note that if $t^{\alpha} \leq r\leq 2t^{\alpha}$, then
\begin{equation} \label{I_2 estimate 2}
\phi^{-1}(t^{-\alpha})\leq \phi^{-1}(2r^{-1})\leq N(c, \delta_0) \phi^{-1}(r^{-1}),
\end{equation}
where the last inequality is from \eqref{phiratio} with $R=\phi^{-1}(2r^{-1})$ and $r=\phi^{-1}(r^{-1})$. Therefore, we have 
\begin{align*}
t^{\alpha-\beta} (\phi^{-1}(t^{-\alpha}))^{(d+m)/2} &\leq N \int_{t^\alpha}^{2t^{\alpha}} (\phi^{-1}(r^{-1}))^{(d+m)/2} rt^{-\alpha-\beta} dr
\\
&\leq N \int_{(\phi(|x|^{-2}))^{-1}}^{2t^{\alpha}} (\phi^{-1}(r^{-1}))^{(d+m)/2} rt^{-\alpha-\beta} dr,
\end{align*}
and this gives the desired result in (ii) for $\beta\in\bN$.

Next, we assume  $\beta\notin\bN$. We repeat the above argument used to estimate $I$ and $II$.  For $I$, by \eqref{betainteger}, we need to estimate
\begin{align*}
\tilde{I}_{1}:=\int_0^{(\phi(|x|^{-2}))^{-1}}|P_{m,n}(r,x)|t^{-\beta}dr,\quad \tilde{I}_{2}:=\int_{(\phi(|x|^{-2}))^{-1}}^{t^\alpha} |P_{m,n}(r,x)|t^{-\beta}dr,
\end{align*}
instead of $I_1$ and $I_2$ respectively. As in \eqref{I_1 estimate 1} and \eqref{eqn 07.27.22.46}, one can prove
\begin{align*}
\tilde{I}_{1}\leq N t^{-\beta}\frac{(\phi(|x|^{-2}))^{-1}}{|x|^{d+m}} \leq N \int_{(\phi(|x|^{-2}))^{-1}}^{2t^{\alpha}} (\phi^{-1}(r^{-1}))^{(d+m)/2} t^{-\beta} dr.
\end{align*}
One can handle $\tilde{I}_{2}$ as in  \eqref{eqn 07.27.23:07} and  prove
$$
\tilde{I}_2 \leq N \int_{(\phi(|x|^{-2}))^{-1}}^{2t^{\alpha}} (\phi^{-1}(r^{-1}))^{d/2} t^{-\beta} dr.
$$
Finally we consider $II$. Note that \eqref{I_2 estimate 1} holds even if $\beta\notin \bN$. Hence, by \eqref{I_2 estimate 2},
\begin{align*}
t^{\alpha-\beta} (\phi^{-1}(t^{-\alpha}))^{(d+m)/2} &\leq N \int_{t^\alpha}^{2t^{\alpha}} (\phi^{-1}(r^{-1}))^{(d+m)/2} t^{-\beta} dr
\\
&\leq N \int_{(\phi(|x|^{-2}))^{-1}}^{2t^{\alpha}} (\phi^{-1}(r^{-1}))^{(d+m)/2} t^{-\beta} dr.
\end{align*}
Thus (ii) is also proved.
\end{proof}

\begin{corollary} \label{qintegrable}
Let $\alpha\in (0,1)$ and $\beta\in \bR$.

(i) There exists a constant $N=N(\alpha,\beta, d, c, \delta_0)$ such that
\begin{equation*}
\int_{\bR^d} |q_{\alpha,\beta}(t,x)|dx \leq N t^{\alpha-\beta}, \quad t>0.
\end{equation*}

(ii) For any $0<\varepsilon<T<\infty$,
\begin{equation*}
\int_{\bR^d} \sup_{[\varepsilon,T]}  |q_{\alpha,\beta}(t,x)| dx < \infty.
\end{equation*}

\end{corollary}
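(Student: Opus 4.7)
The plan is to combine the two bounds from Lemma \ref{qestimatebeta123} by splitting $\bR^d$ according to whether $t^{\alpha}\phi(|x|^{-2})$ is $\geq 1$ or $<1$. Set
$$\rho(t):=(\phi^{-1}(t^{-\alpha}))^{-1/2},$$
so that $t^{\alpha}\phi(|x|^{-2})\geq 1$ is equivalent to $|x|\leq \rho(t)$, and note $\rho$ is increasing.

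For (i), I would split the integral at $\rho(t)$. On $\{|x|>\rho(t)\}$, use the pointwise bound from Lemma \ref{qestimatebeta123}(i) and polar coordinates:
$$\int_{|x|>\rho(t)} |q_{\alpha,\beta}(t,x)|\,dx\leq N t^{2\alpha-\beta}\int_{\rho(t)}^{\infty} r^{-1}\phi(r^{-2})\,dr\leq N t^{2\alpha-\beta}\phi(\rho(t)^{-2})=N t^{\alpha-\beta},$$
where the second inequality uses Lemma \ref{gammaversionj} and the last equality uses $\phi(\rho(t)^{-2})=t^{-\alpha}$. On $\{|x|\leq \rho(t)\}$, apply Lemma \ref{qestimatebeta123}(ii) and Fubini: for $\beta\in\bN$, the $(x,r)$-region is parametrised as $r\in(0,2t^{\alpha})$ and $|x|\leq (\phi^{-1}(r^{-1}))^{-1/2}$ (a ball of volume $\sim (\phi^{-1}(r^{-1}))^{-d/2}$), which exactly cancels the factor $(\phi^{-1}(r^{-1}))^{d/2}$ to give
$$\int_{|x|\leq \rho(t)}|q_{\alpha,\beta}(t,x)|\,dx\leq N\int_{0}^{2t^{\alpha}} r\,t^{-\alpha-\beta}\,dr\leq N t^{\alpha-\beta}.$$
For $\beta\notin\bN$, the same exchange yields $N\int_{0}^{2t^{\alpha}}t^{-\beta}\,dr=N t^{\alpha-\beta}$.

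For (ii), I would exploit the monotonicity of $\rho$. On $\{|x|>\rho(\varepsilon)\}$ the first bound in Lemma \ref{qestimatebeta123} yields
$$\sup_{t\in[\varepsilon,T]}|q_{\alpha,\beta}(t,x)|\leq N_{\varepsilon,T}\frac{\phi(|x|^{-2})}{|x|^{d}},$$
whose integral over $\{|x|>\rho(\varepsilon)\}$ is finite by Lemma \ref{gammaversionj}. On $\{|x|\leq \rho(\varepsilon)\}$ we have $t^{\alpha}\phi(|x|^{-2})\geq 1$ for every $t\in[\varepsilon,T]$, so Lemma \ref{qestimatebeta123}(ii) applies uniformly; bounding $t^{-\alpha-\beta}$ (or $t^{-\beta}$) and $2t^{\alpha}$ by constants depending on $\varepsilon,T$ and running the same Fubini-with-cancellation argument as in (i) yields a finite bound.

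The only step with any real content is the Fubini exchange on the near region: once one recognises that the measure of $\{|x|:\phi(|x|^{-2})^{-1}\leq r\}$ is comparable to $(\phi^{-1}(r^{-1}))^{-d/2}$, the sharp factor $(\phi^{-1}(r^{-1}))^{d/2}$ in the bound of Lemma \ref{qestimatebeta123}(ii) disappears and the remaining $r$-integral is elementary. This cancellation, together with Lemma \ref{gammaversionj} used on the far region, is what makes the final scaling $t^{\alpha-\beta}$ come out cleanly and independently of the structure of $\phi$ beyond Assumption \ref{ass bernstein}.
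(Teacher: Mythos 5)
Your proposal is correct and follows essentially the same route as the paper: split $\bR^d$ at $|x|=(\phi^{-1}(t^{-\alpha}))^{-1/2}$ (resp.\ at $(\phi^{-1}(\varepsilon^{-\alpha}))^{-1/2}$ for (ii)), use the pointwise bound of Lemma \ref{qestimatebeta123}(i) together with Lemma \ref{gammaversionj} on the far region, and on the near region apply Lemma \ref{qestimatebeta123}(ii) with the Fubini exchange in which the volume of $\{x:(\phi(|x|^{-2}))^{-1}\leq r\}$ cancels the factor $(\phi^{-1}(r^{-1}))^{d/2}$. No gaps; this matches the paper's argument.
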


\begin{proof}

(i) Due to the similarity, we only consider the case $\beta\in\bN$. 
 By Lemma \ref{int phi}, Lemma \ref{qestimatebeta123}, and Fubini's theorem,
\begin{align*}
\int_{\bR^d} |q_{\alpha,\beta}(t,x)|dx =&\int_{|x|\geq (\phi^{-1}(t^{-\alpha}))^{-1/2}} |q_{\alpha,\beta}(t,x)|dx
\\
&+\int_{|x|< (\phi^{-1}(t^{-\alpha}))^{-1/2}} |q_{\alpha,\beta}(t,x)|dx
\\
\leq& N \int_{|x|\geq (\phi^{-1}(t^{-\alpha}))^{-1/2}} t^{2\alpha-\beta}\frac{\phi(|x|^{-2})}{|x|^d} dx
\\
&+ N \int_{|x|< (\phi^{-1}(t^{-\alpha}))^{-1/2}} \int_{(\phi(|x|^{-2}))^{-1}}^{2t^{\alpha}}(\phi^{-1}(r^{-1}))^{d/2} rt^{-\alpha-\beta} dr dx
\\
\leq& N \int_{r\geq \left( \phi^{-1}(t^{-\alpha}) \right)^{-1/2}} t^{2\alpha-\beta} \frac{\phi(r^{-2})}{r}dr
\\
&+ N \int_{0}^{2t^\alpha} \int_{(\phi(|x|^{-2}))^{-1}\leq r} (\phi^{-1}(r^{-1}))^{d/2} rt^{-\alpha-\beta} dx dr
\\
\leq& N t^{\alpha-\beta} + N \int_0^{2t^\alpha} rt^{-\alpha-\beta} dr
\leq N t^{\alpha-\beta}.
\end{align*}

(ii) Again we only prove the case  $\beta\in\bN$. Let $0<\varepsilon<T<\infty$. Since $t^{2\alpha-\beta}\leq N(\varepsilon,T,\alpha,\beta)$ for $t\in[\varepsilon,T]$, by Lemma \ref{qestimatebeta123},
\begin{align*}
|q_{\alpha,\beta}(t,x)|\leq N(\alpha,\beta,d,c,\delta_0,\varepsilon,T) \frac{\phi(|x|^{-2})}{|x|^d}, \quad t\in[\varepsilon,T].
\end{align*}

Also, if  $\varepsilon^\alpha \phi(|x|^{-2})\geq1$, and $t\in[\varepsilon,T]$, then   using Lemma \ref{qestimatebeta123} again, we get
\begin{align*}
|q_{\alpha,\beta}(t,x)|\leq N(\alpha,\beta,d,c,\delta_0,\varepsilon,T) \int_{(\phi(|x|^{-2}))^{-1}}^{2T^\alpha} (\phi^{-1}(r^{-1}))^{d/2} r dr.
\end{align*}

\noindent
As in the proof of (i),
\begin{align*}
\int_{\bR^d} \sup_{[\varepsilon,T]}|q_{\alpha,\beta}(t,x)|dx =&\int_{|x|\geq (\phi^{-1}(\varepsilon^{-\alpha}))^{-1/2}} \sup_{[\varepsilon,T]}|q_{\alpha,\beta}(t,x)|dx
\\
&+\int_{|x|< (\phi^{-1}(\varepsilon^{-\alpha}))^{-1/2}} \sup_{[\varepsilon,T]}|q_{\alpha,\beta}(t,x)|dx
\\
\leq& N \int_{|x|\geq (\phi^{-1}(\varepsilon^{-\alpha}))^{-1/2}} \frac{\phi(|x|^{-2})}{|x|^d} dx
\\
&+ N \int_{|x|< (\phi^{-1}(\varepsilon^{-\alpha}))^{-1/2}} \int_{(\phi(|x|^{-2}))^{-1}}^{2T^{\alpha}}(\phi^{-1}(r^{-1}))^{d/2} r dr dx
\\
\leq& N + N \int_{0}^{2T^\alpha} \int_{(\phi(|x|^{-2}))^{-1}\leq r} (\phi^{-1}(r^{-1}))^{d/2} r dx dr
\\
\leq& N + N \int_0^{2T^\alpha} r dr <\infty.
\end{align*}
The corollary is proved.\end{proof}

\mysection{Key estimates: BMO and \texorpdfstring{$L_q(L_p)$}{Lq(Lp)}-estimates}

In this section we prove some a priori estimates for solutions to the  equation with zero initial condition
\begin{equation}\label{mainequation-1}
\partial_t^\alpha u = \phi(\Delta)u + f,\quad t>0\,; \quad u(0,\cdot)=0.
\end{equation}

We first present the representation formula.

\begin{lemma} \label{u=qfsolution}

(i) Let $u\in C_c^\infty(\bR_+^{d+1})$ and denote $f:=\partial_t^\alpha u-\phi(\Delta)u$. Then
\begin{equation} \label{u=qf}
u(t,x)=\int_0^t \int_{\bR^d} q_{\alpha,1}(t-s,x-y)f(s,y) dy ds.
\end{equation}

(ii) Let $f\in C_c^\infty(\bR_+^{d+1})$ and define $u$ as in (\ref{u=qf}). Then $u$ satisfies equation \eqref{mainequation-1} for each $(t,x)$.

\end{lemma}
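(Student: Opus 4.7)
My plan is to reduce the problem to a scalar fractional ODE in $t$ via the spatial Fourier transform, whose Green's function is precisely the Mittag-Leffler expression appearing in \eqref{fourier of q}. For part (i), since $u\in C_c^\infty(\bR^{d+1}_+)$, the function $x\mapsto u(t,x)$ lies in $\cS(\bR^d)$ with all seminorms bounded uniformly in $t$, and $u(0,\cdot)=0$. Taking the spatial Fourier transform of $\partial_t^\alpha u=\phi(\Delta)u+f$, using $\widehat{\phi(\Delta)u}(t,\xi)=-\phi(|\xi|^2)\hat u(t,\xi)$ and the commutation of $\partial_t^\alpha$ with the $x$-Fourier transform (via Fubini applied to the defining $I_t^{1-\alpha}$ integral), I obtain for each fixed $\xi$ the scalar problem
\begin{equation*}
\partial_t^\alpha \hat u(t,\xi) + \phi(|\xi|^2)\hat u(t,\xi)=\hat f(t,\xi), \qquad \hat u(0,\xi)=0.
\end{equation*}

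The classical Laplace transform theory for fractional ODEs (or a direct power-series verification) gives the unique solution
\begin{equation*}
\hat u(t,\xi)=\int_0^t (t-s)^{\alpha-1}E_{\alpha,\alpha}\bigl(-(t-s)^\alpha \phi(|\xi|^2)\bigr)\hat f(s,\xi)\,ds,
\end{equation*}
and by \eqref{fourier of q} with $\beta=1$ the kernel equals $\hat q_{\alpha,1}(t-s,\xi)$. Corollary \ref{qintegrable}(i) shows $q_{\alpha,1}(t,\cdot)\in L_1(\bR^d)$ with norm at most $Nt^{\alpha-1}$, locally integrable in $t$; hence the convolution theorem and Fubini rewrite the right-hand side as the Fourier transform of $\int_0^t (q_{\alpha,1}(t-s,\cdot)\ast f(s,\cdot))\,ds$, and injectivity of the Fourier transform yields \eqref{u=qf}. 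Part (ii) follows by running the same chain in reverse: defining $u$ by \eqref{u=qf} from $f\in C_c^\infty$, spatial smoothness is obtained by differentiating under the integral (justified by Corollary \ref{qintegrable}(ii)), while computing $\hat u$ via the convolution theorem and invoking the Mittag-Leffler Duhamel identity $\partial_t^\alpha (v\ast g)+\lambda(v\ast g)=g$ for $v(t)=t^{\alpha-1}E_{\alpha,\alpha}(-\lambda t^\alpha)$ (easiest to verify in Laplace space, where $\tilde v(s)=1/(s^\alpha+\lambda)$) shows $\hat u$ satisfies the ODE, so $u$ satisfies \eqref{mainequation-1} pointwise after inverting the Fourier transform.

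The main obstacle will be rigorously justifying the manipulations around the singularity of $q_{\alpha,1}$, which by Lemma \ref{qestimatebeta123} has both a $t^{\alpha-1}$ blow-up at $t=0$ and heavy-tailed spatial behavior in its higher derivatives. Direct Leibniz differentiation under the time integral is illegitimate at $t-s=0$, but one can first apply $I_t^{1-\alpha}$, which smooths $t^{\alpha-1}E_{\alpha,\alpha}(-\lambda t^\alpha)$ into the bounded $E_\alpha(-\lambda t^\alpha)$, and only then differentiate once in $t$, mirroring the Laplace bookkeeping sketched above. The compact support and smoothness of $f$ (or of $u$ in part (i)) ensure absolute convergence throughout and guarantee validity of each Fubini swap.
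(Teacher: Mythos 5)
Your argument is correct and is essentially the proof the paper relies on: the paper defers to \cite[Lemma 3.5]{kim17timefractionalpde}, which is exactly this spatial-Fourier reduction to the scalar fractional ODE whose Green's function is $t^{\alpha-1}E_{\alpha,\alpha}(-t^{\alpha}\phi(|\xi|^2))=\hat{q}_{\alpha,1}(t,\xi)$, adapted to general $\phi$ precisely via \eqref{fourier of q} and Corollary \ref{qintegrable}. Your device of applying $I_t^{1-\alpha}$ before differentiating to tame the $t^{\alpha-1}$ singularity matches the bookkeeping in that cited proof as well.
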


\begin{proof}
 See \cite[Lemma 3.5]{kim17timefractionalpde}, which treats the case $\phi(\lambda)=\lambda$. The proof for the general case is same. The only difference is one needs to use   formula \eqref{fourier of q} and Corollary \ref{qintegrable}  in place of their corresponding results when $\phi(\lambda)=\lambda$.
\end{proof}

Recall that due to Corollary \ref{qintegrable} (i), $q_{\alpha,1}$ is integrable on $[0,T]\times \bR^d$ for any $T>0$.  Also note that  
$\phi(\Delta)f$ is bounded for any $f\in C_c^\infty (\bR^{d+1})$. Therefore   the  operators $\cG_0$ and $\cG$ are well defined on $C_c^\infty (\bR^{d+1})$: 
\begin{equation*}
\cG_0 f:=\int_{-\infty}^t \int_{\bR^d} q_{\alpha,1}(t-s,y)f(s,x-y) dy ds,
\end{equation*}
\begin{equation*}
\cG f:=\phi(\Delta)\cG_0 f=\int_{-\infty}^t \int_{\bR^d} q_{\alpha,1}(t-s,y)\phi(\Delta)f(s,x-y) dy ds.
\end{equation*}

For each fixed $s$ and $t$ such that  $s<t$, define
$$
T_{t,s}f(x) :=\int_{\bR^d} q_{\alpha,1}(t-s,x-y) \phi(\Delta) f(s,y) dy,
$$
and
$$
G_{t,s}f(x) :=\int_{\bR^d} q_{\alpha,1+\alpha}(t-s,x-y)  f(s,y) dy.
$$
Note that, by Corollary \ref{qintegrable},  $T_{t,s}f$ and $G_{t,s}f$ are square integrable. Moreover, from \eqref{Mittag recurr} and \eqref{fourier of q} we have
\begin{equation*}
\begin{aligned}
\cF_d \{T_{t,s}f\}(\xi) & = -\phi(|\xi|^2) \hat{q}_{\alpha,1}(t-s,\xi) \hat{f}(s,\xi)
\\
&=\hat{q}_{\alpha,1+\alpha}(t-s,\xi)\hat{f}(s,\xi)=\cF_d \{G_{t,s}f\}(\xi).
\end{aligned}
\end{equation*}
This implies that $T_{t,s}f=G_{t,s}f$ for $s<t$.  Therefore, we have
\begin{equation*}
       \label{eqn 07.06.16:02}
\cG f(t,x)=\phi(\Delta)\cG_0 f=\lim_{\varepsilon \downarrow 0} \int_{-\infty}^{t-\varepsilon} \left(\int_{\bR^d} q_{\alpha,1+\alpha}(t-s,y) f(s,x-y) dy\right) ds.
\end{equation*}

The following result concerns the $L_2$-boundedness of the operator $\cG$.

\begin{lemma} \label{22estimate}
There exists a constant $N$ depending only on $\alpha$ and $d$ such that for any $f\in C_c^\infty(\bR^{d+1})$
\begin{equation} \label{L2estimate}
\|\cG f\|_{L_2(\bR^{d+1})}\leq N \|f\|_{L_2(\bR^{d+1})}.
\end{equation}
 Consequently,   operator $\cG$ is continuously extended to $L_2(\bR^{d+1})$. 
\end{lemma}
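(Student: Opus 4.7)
I would prove \eqref{L2estimate} by reducing it, via two applications of Plancherel's theorem, to a uniform pointwise bound on a Fourier multiplier. First, extend $f$ by zero for $t\le 0$ so that $\cG f$ becomes the causal space-time convolution of $f$ against $q_{\alpha,1+\alpha}(t,y)\mathbf{1}_{t>0}$. Using \eqref{fourier of q} with $\beta=1+\alpha$ together with the recurrence \eqref{Mittag recurr} applied at $\beta=0$, which gives $E_{\alpha,0}(z)=zE_{\alpha,\alpha}(z)$, one obtains the compact identity
\begin{equation*}
\hat q_{\alpha,1+\alpha}(t,\xi)=-\phi(|\xi|^{2})\,t^{\alpha-1}E_{\alpha,\alpha}(-t^{\alpha}\phi(|\xi|^{2})),\qquad t>0.
\end{equation*}
Writing $\lambda=\phi(|\xi|^{2})$ and taking the spatial Fourier transform, for each fixed $\xi$,
\begin{equation*}
\cF(\cG f)(t,\xi)=(h_{\lambda}\ast \hat f(\cdot,\xi))(t),\qquad h_{\lambda}(t):=-\lambda\,t^{\alpha-1}E_{\alpha,\alpha}(-\lambda t^{\alpha})\mathbf{1}_{t>0},
\end{equation*}
where the convolution is in the $t$ variable.

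The crux of the argument is identifying $\hat h_{\lambda}$. I would start from the classical Mittag-Leffler Laplace identity
\begin{equation*}
\int_{0}^{\infty}e^{-st}t^{\alpha-1}E_{\alpha,\alpha}(-\lambda t^{\alpha})\,dt=\frac{1}{s^{\alpha}+\lambda},\qquad \operatorname{Re}s>0,
\end{equation*}
obtained by term-by-term integration of the defining series for $|s|^{\alpha}>\lambda$ and analytic continuation. The asymptotics $E_{\alpha,\alpha}(-x)=O(x^{-2})$ as $x\to\infty$ and $E_{\alpha,\alpha}(0)=1/\Gamma(\alpha)$ imply $h_{\lambda}\in L_{1}(\bR)$, so the Laplace transform has a continuous extension to $\{\operatorname{Re}s\ge 0\}$, and dominated convergence as $\operatorname{Re}s\downarrow 0$ gives
\begin{equation*}
\hat h_{\lambda}(\tau)=-\frac{\lambda}{(i\tau)^{\alpha}+\lambda},\qquad (i\tau)^{\alpha}:=|\tau|^{\alpha}e^{i(\pi\alpha/2)\operatorname{sgn}\tau}.
\end{equation*}
Since $\alpha\in(0,1)$ gives $\cos(\pi\alpha/2)>0$, the elementary bound
\begin{equation*}
|(i\tau)^{\alpha}+\lambda|^{2}=\bigl(|\tau|^{\alpha}\cos(\pi\alpha/2)+\lambda\bigr)^{2}+|\tau|^{2\alpha}\sin^{2}(\pi\alpha/2)\ge\lambda^{2}
\end{equation*}
yields $|\hat h_{\lambda}(\tau)|\le 1$ for every $\tau\in\bR$ and every $\lambda>0$, hence uniformly in $\xi$. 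Applying Plancherel in $t$ and then in $x$ delivers \eqref{L2estimate} with $N$ independent of $f$, and the continuous extension to $L_{2}(\bR^{d+1})$ follows by density of $C_{c}^{\infty}(\bR^{d+1})$.

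The main obstacle is this rigorous passage from the Laplace transform to the Fourier transform of the Mittag-Leffler kernel, since the standard identity is stated only for $\operatorname{Re}s>0$. It is resolved by the $L_{1}$-integrability of $h_{\lambda}$, which follows from the quadratic decay of $E_{\alpha,\alpha}(-x)$ at infinity, together with the boundary continuity of the Laplace transform on $\{\operatorname{Re}s\ge 0\}$. Once this multiplier identification is secured, the rest of the argument is a routine Plancherel--Fourier multiplier computation and in particular uses nothing about $\phi$ beyond its positivity, so Assumption \ref{ass bernstein} plays no role in this step.
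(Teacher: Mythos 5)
Your proof is correct, and it shares the paper's overall strategy (Plancherel in all $d+1$ variables, reducing \eqref{L2estimate} to a uniform bound on the space--time symbol of $\cG$), but the way you establish the symbol bound is genuinely different. The paper truncates the kernel to $q_{\alpha,1}\mathbf{1}_{0<t<M}$, writes the resulting symbol after an integration by parts as $J_M(\tau,\xi)=\int_0^M e^{-i\tau t}\tfrac{d}{dt}E_\alpha(-t^\alpha\phi(|\xi|^2))\,dt$, identifies this with the quantity $I_M(\tau,\sqrt{\phi(|\xi|^2)})$ from \cite{kim17timefractionalpde}, cites the uniform bound proved there, and concludes by Fatou's lemma. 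You instead identify the temporal Fourier multiplier in closed form: using \eqref{fourier of q} and \eqref{Mittag recurr} to write the kernel as $h_\lambda(t)=-\lambda t^{\alpha-1}E_{\alpha,\alpha}(-\lambda t^\alpha)\mathbf{1}_{t>0}$ with $\lambda=\phi(|\xi|^2)$, checking $h_\lambda\in L_1$ via the $O(x^{-2})$ decay of $E_{\alpha,\alpha}(-x)$ (the $x^{-1}$ term drops because $1/\Gamma(0)=0$), and passing from the Laplace identity $\cL h_\lambda(s)=-\lambda/(s^\alpha+\lambda)$ to the boundary $s=i\tau$ by continuity, whence $|\hat h_\lambda(\tau)|\le 1$ follows from $\cos(\pi\alpha/2)>0$. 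Your route is more self-contained and yields an explicit symbol with an explicit constant, at the cost of invoking the large-argument asymptotics of $E_{\alpha,\alpha}$ and the boundary-value argument for the Laplace transform; the paper's route avoids both by truncating in time, but leans on an external estimate. Both correctly yield $N=N(\alpha,d)$ independent of $\phi$, consistent with the statement of the lemma.
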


\begin{proof}
We follow the proof of   \cite[Lemma 3.6]{kim17timefractionalpde} which treats the case $\phi(\lambda)=\lambda$.

Denote $q_M:=q_{\alpha,1}\mathbf{1}_{0<t<M}$. By Corollary \ref{qintegrable},  $q_M$ is integrable in $\bR^{d+1}$. Denote 
$$\cG_M f = q_M * \phi(\Delta)f.
$$
 Then by Parseval's identity
\begin{align}
 \label{eqn 7.29.6}
\|\cG_M f\|_{L_2}^2=N(d)\int_{\bR^{d+1}}|\cF_{d+1}(q_M * \phi(\Delta)f)|^2 d\tau d\xi,
\end{align}
where $\cF_{d+1}$ is the $(d+1)$-dimensional Fourier transform. By the properties of the Fourier transform,
\begin{eqnarray}
\nonumber
\cF_{d+1}(q_M * \phi(\Delta)f)(\tau,\xi)&=&-N(d)\phi(|\xi|^2)\cF_{d+1}(q_M)(\tau,\xi)\cF_{d+1}(f)(\tau,\xi)\\
&=&-N(d)J_M(\tau,\xi)\cF_{d+1}(f)(\tau,\xi),   \label{eqn 7.30.1}
\end{eqnarray}
where 
\begin{align*}
J_M(\tau,\xi):=&-\phi(|\xi|^2)\cF_{d+1}(q_M)(\tau,\xi)\\
=& \int_0^M -\phi(|\xi|^2) e^{-i\tau t}\cF_d(q_M)(t,\xi)dt
\\
=& \int_0^M -\phi(|\xi|^2) e^{-i\tau t}D_t^{1-\alpha}E_\alpha(-t^\alpha\phi(|\xi|^2))dt.
\end{align*}
The last equality is due to  (\ref{fourier of q}). By \cite[Proposition 3.25]{gorenflo2014mittag},
\begin{equation*}
-\phi(|\xi|^2)D_t^{1-\alpha}E_\alpha(-t^\alpha\phi(|\xi|^2)) = \frac{d}{dt}E_\alpha (-t^\alpha\phi(|\xi|^2)).
\end{equation*}
Therefore,
\begin{align*}
\nonumber
J_M(\tau,\xi)=& \int_0^M e^{-i\tau t}\frac{d}{dt}E_\alpha (-t^\alpha\phi(|\xi|^2)) dt
\\
\nonumber
=& \int_0^M i\tau e^{-i\tau t} E_\alpha(-t^\alpha\phi(|\xi|^2))dt + e^{-i\tau M}E_\alpha(-M^\alpha\phi(|\xi|^2))-1
\\
\nonumber
=& \int_0^{|\tau| M} i\text{sgn}(\tau) e^{-\text{sgn}(\tau)it} E_\alpha(-(t/|\tau|)^\alpha\phi(|\xi|^2))dt 
\\
\label{I_M formula}
&\quad+ e^{-i\tau M}E_\alpha(-M^\alpha\phi(|\xi|^2))-1\\
=&I_M(\tau,\sqrt{\phi(|\xi|^2)}),
\end{align*}
where $I_M(\tau,\lambda)$ is taken from  formula (3.17) in \cite{kim17timefractionalpde}, and  by the inequality above (3.16) in \cite{kim17timefractionalpde}
$$
\sup_{M,\lambda>0,\tau\in \bR} |I_M(\tau,\lambda)|<\infty.
$$
This,  \eqref{eqn 7.29.6}, \eqref{eqn 7.30.1} and   Fatou's lemma lead to \eqref{L2estimate}. The lemma is proved. 
\end{proof}

We define an increasing function $\kappa:(0,\infty)\to (0,\infty)$ by 
$$
\kappa(b)=(\phi(b^{-2}))^{-1/\alpha}, \quad b>0.
$$
For  $(t,x)\in\R^{d+1}$ and $b>0$,  denote
\begin{equation*}
 Q_b(t,x)=(t-\kappa(b),\,t+\kappa(b))\times {B}_b(x),
\end{equation*}
and
\begin{equation*}
Q_b=Q_b(0,0), \quad B_b=B_b(0).
\end{equation*}
For  measurable subsets $Q\subset \R^{d+1}$ with  finite measure and  locally integrable functions $h$, define
\begin{equation*}
h_Q=\aint_{Q}h(s,y)dyds=\frac{1}{|Q|}\int_{Q}h(s,y)dyds,
\end{equation*}
where $|Q|$ is the Lebesgue measure of $Q$.

In the following lemmas, Lemmas \ref{ininestimate} - \ref{outinspaceestimate},  we estimate the mean oscillation of $\cG f$ on $Q_b$. For this, 
we  consider the  cases

\begin{itemize}

\item (Lemma  \ref{inwholeestimate}) $f$ has support in   $(-3\kappa(b), \infty)\times \bR^d$; 
 \item (Lemma \ref{outwholetimeestimate}) $f$ has support in $(-\infty, -2\kappa(b))\times \bR^d$.

\end{itemize}

The second case above is further divided into the  cases

\begin{itemize}

\item (Lemma \ref{outoutspaceestimate}) $f$ has  support in  $(-\infty,-2\kappa(b))\times B_{2b}^c$; 
\item (Lemma \ref{outinspaceestimate}) $f$ has support in  $(-\infty,-2\kappa(b))\times B_{3b}$.

\end{itemize}

\begin{lemma} \label{ininestimate}
Let $f\in C^{\infty}_c(\bR^{d+1})$  have a support in   $ (-3\kappa(b), 3\kappa(b))\times B_{3b}$ for some $b>0$. Then,
\begin{align*}
\aint_{Q_b}|\cG f (t,x)|dxdt \leq N \|f\|_{L_\infty(\bR^{d+1})},
\end{align*}
where $N$ depends only on $\alpha$ and $d$.
\end{lemma}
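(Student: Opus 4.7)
The plan is to reduce the desired $L_1$-mean oscillation bound to the $L_2$-boundedness of $\cG$ from Lemma \ref{22estimate}, exploiting the fact that the support of $f$ has exactly the same space-time volume (up to constants) as the cube $Q_b$. This is why the definition $\kappa(b)=(\phi(b^{-2}))^{-1/\alpha}$ is the correct parabolic-type scaling for this equation: the $L_2$-bound for $\cG$ is scale-invariant in this sense.

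Concretely, I would first apply Cauchy--Schwarz to pass from the $L_1$-average to an $L_2$-average:
\begin{equation*}
\aint_{Q_b}|\cG f(t,x)|\,dx\,dt \leq |Q_b|^{-1/2}\,\|\cG f\|_{L_2(Q_b)} \leq |Q_b|^{-1/2}\,\|\cG f\|_{L_2(\bR^{d+1})}.
\end{equation*}
Next, Lemma \ref{22estimate} gives $\|\cG f\|_{L_2(\bR^{d+1})} \leq N\,\|f\|_{L_2(\bR^{d+1})}$. Finally, since $f$ is supported in $(-3\kappa(b),3\kappa(b))\times B_{3b}$, a trivial bound yields
\begin{equation*}
\|f\|_{L_2(\bR^{d+1})} \leq \|f\|_{L_\infty(\bR^{d+1})}\bigl(6\kappa(b)\cdot |B_{3b}|\bigr)^{1/2}.
\end{equation*}
Since $|Q_b|=2\kappa(b)\cdot|B_b|$, the factor $|Q_b|^{-1/2}$ cancels the support-measure factor up to a constant depending only on $d$, giving the claim with $N=N(\alpha,d)$.

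There is no real obstacle here beyond having Lemma \ref{22estimate} in hand; the whole point of introducing $\kappa$ is to make this argument work. The delicate estimates, by contrast, will arise in the companion lemmas (\ref{outwholetimeestimate}, \ref{outoutspaceestimate}, \ref{outinspaceestimate}) where $f$ is supported outside $Q_b$ and one must directly use the pointwise bounds on $q_{\alpha,1+\alpha}$ from Lemma \ref{qestimatebeta123}.
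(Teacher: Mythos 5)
Your proposal is correct and is essentially identical to the paper's proof: Cauchy--Schwarz on $Q_b$, the $L_2$-bound of Lemma \ref{22estimate}, and the observation that the support of $f$ has measure comparable to $|Q_b|=2\kappa(b)|B_b|$ so the volume factors cancel up to a dimensional constant.
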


\begin{proof}
By the assumption and \eqref{phiratio}
$$
\|f\|_{L_2(\bR^{d+1})}\leq N(\alpha,d) |Q_{b}|^{1/2}\|f\|_{L_{\infty}(\bR^{d+1})}.
$$
Thus,  by H\"older's inequality and Lemma \ref{22estimate},
\begin{align*}
\int_{Q_b}|\cG f (t,x)|dxdt \leq & \left(\int_{Q_b}|\cG f (t,x)|^2dxdt\right)^{1/2}|Q_b|^{1/2}
\\
\leq & \|\cG f\|_{L_2(\bR^{d+1})}|Q_b|^{1/2}
\\
\leq & N\|f\|_{L_2(\bR^{d+1})}|Q_b|^{1/2}
\\
\leq & N \|f\|_{L_\infty(\bR^{d+1})}|Q_b|.
\end{align*}
The lemma is proved.
\end{proof}

Here is a generalization of Lemma \ref{ininestimate}.

\begin{lemma} \label{inwholeestimate} 
Let $f\in C_c^\infty(\bR^{d+1})$  have a support in $(-3\kappa(b), \infty)\times \bR^d$ for some $b>0$.  Then,
\begin{align*}
\aint_{Q_b}|\cG f (t,x)|dxdt \leq N \|f\|_{L_\infty(\bR^{d+1})},
\end{align*}
where $N$ depends only on $\alpha,d,c$ and $\delta_0$.
\end{lemma}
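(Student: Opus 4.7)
The plan is to reduce this to Lemma \ref{ininestimate} plus a tail estimate, by splitting $f$ both in time (using causality of the kernel) and in space (using a cutoff at scale $b$).

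First I would truncate in time. Choose a smooth function $\chi$ on $\bR$ with $\chi(s)=1$ for $s\le\kappa(b)$ and $\chi(s)=0$ for $s\ge 2\kappa(b)$. Since the kernel $q_{\alpha,1+\alpha}(t-s,\cdot)$ in the definition of $\cG$ is supported in $s<t$, and $t<\kappa(b)$ for $(t,x)\in Q_b$, replacing $f$ by $\tilde f(s,y):=f(s,y)\chi(s)$ leaves $\cG f$ unchanged on $Q_b$. Now $\tilde f\in C_c^\infty$ has time support in $(-3\kappa(b),2\kappa(b))\subset(-3\kappa(b),3\kappa(b))$ and $\|\tilde f\|_{L_\infty}\le\|f\|_{L_\infty}$.

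Next I would split spatially. Pick $\zeta\in C_c^\infty(\bR^d)$ with $\zeta=1$ on $B_{2b}$ and $\operatorname{supp}\zeta\subset B_{3b}$, and write $\tilde f=\tilde f\zeta+\tilde f(1-\zeta)$. The piece $\tilde f\zeta$ lies in $C_c^\infty$ with support in $(-3\kappa(b),3\kappa(b))\times B_{3b}$, so Lemma \ref{ininestimate} applies directly and yields
\begin{equation*}
\aint_{Q_b}|\cG(\tilde f\zeta)(t,x)|\,dxdt\le N\|f\|_{L_\infty}.
\end{equation*}

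For the far piece $\tilde f(1-\zeta)$, I would use the explicit kernel representation. For $(t,x)\in Q_b$ and $x\in B_b$, the integrand in
\begin{equation*}
\cG(\tilde f(1-\zeta))(t,x)=\int_{-3\kappa(b)}^{t}\!\int_{\bR^d}q_{\alpha,1+\alpha}(t-s,y)\tilde f(s,x-y)(1-\zeta(x-y))\,dyds
\end{equation*}
is supported in $|x-y|\ge 2b$, hence in $|y|\ge b$. Applying Lemma \ref{qestimatebeta123}(i) with $\beta=1+\alpha$ and $m=0$ gives $|q_{\alpha,1+\alpha}(t-s,y)|\le N(t-s)^{\alpha-1}\phi(|y|^{-2})/|y|^{d}$, so by polar coordinates and Lemma \ref{gammaversionj},
\begin{equation*}
\int_{|y|\ge b}|q_{\alpha,1+\alpha}(t-s,y)|\,dy\le N(t-s)^{\alpha-1}\int_{b}^{\infty}r^{-1}\phi(r^{-2})\,dr\le N(t-s)^{\alpha-1}\phi(b^{-2}).
\end{equation*}
The $s$-integral satisfies $\int_{-3\kappa(b)}^{t}(t-s)^{\alpha-1}\,ds\le N\kappa(b)^{\alpha}$, and since $\kappa(b)^{\alpha}\phi(b^{-2})=1$ by definition of $\kappa$, I get the pointwise bound $|\cG(\tilde f(1-\zeta))(t,x)|\le N\|f\|_{L_\infty}$ on $Q_b$.

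Combining the two pieces finishes the proof. The only genuinely delicate point is the matching between the time scale $\kappa(b)$ and the space scale $b$, which is arranged precisely so that the time integral $\kappa(b)^\alpha$ and the tail integral $\phi(b^{-2})$ cancel; everything else is bookkeeping (the time truncation relies on causality, the space cutoff relies on Lemma \ref{ininestimate} for the near part, and the tail uses Lemmas \ref{qestimatebeta123}(i) and \ref{gammaversionj}).
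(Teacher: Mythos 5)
Your proposal is correct and follows essentially the same route as the paper: a causal time truncation at scale $\kappa(b)$, a spatial cutoff reducing the near part to Lemma \ref{ininestimate}, and a tail bound for the far part via Lemma \ref{qestimatebeta123}(i) and Lemma \ref{gammaversionj}, closed by the identity $\kappa(b)^{\alpha}\phi(b^{-2})=1$. The only differences are the exact cutoff radii, which are immaterial.
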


\begin{proof}
Take $\zeta_0=\zeta_0(t)\in C^{\infty}(\bR)$ such that $0\leq \zeta_0\leq 1$, $\zeta_0(t)=1$ for $t\leq 2\kappa(b)$, and $\zeta_0(t)=0$ for $t\geq 5/2\kappa(b)$.
Note that $\cG f=\cG (f\zeta_0)$ on $Q_b$ and $|f\zeta_0|\leq |f|$. This implies that to prove the lemma it is enough to assume $f(t,x)=0$ if $|t|\geq 3\kappa(b)$.

Choose a function $\zeta=\zeta(x) \in C_c^\infty (\bR^d)$ such that $\zeta=1$ in $B_{7b/3}$, $\zeta=0$ outside of $B_{8b/3}$ and $0\leq\zeta\leq1$. Set $f_1=\zeta f$ and $f_2=(1-\zeta)f$. Then $\cG f = \cG f_1 + \cG f_2$. Since $\cG f_1$ can be estimated by Lemma \ref{ininestimate}, to prove the lemma, we may further assume that $f(t,y)=0$ if $y\in B_{2b}$.  Therefore, for any $x\in B_b$,
\begin{align*}
\int_{\bR^d} \left|q_{\alpha,1+\alpha} (t-s,x-y) f(s,y)\right| dy =& \int_{|y-x|\geq 2b} \left|q_{\alpha,1+\alpha} (t-s,y) f(s,x-y)\right| dy 
\\
\leq & \int_{|y|\geq b} \left|q_{\alpha,1+\alpha} (t-s,y) f(s,x-y) \right| dy.
\end{align*}
 By \eqref{int phi} and   \eqref{eqn 7.21.7},
\begin{align*}
& \int_{|y|\geq b} \left|q_{\alpha,1+\alpha} (t-s,y) f(s,x-y) \right| dy 
\\
&\leq \|f\|_{L_\infty(\bR^{d+1})} 1_{|s|\leq 3\kappa(b)} \int_{|y|\geq b} |q_{\alpha,1+\alpha} (t-s,y)| dy
\\
&\leq N\|f\|_{L_\infty(\bR^{d+1})}1_{|s|\leq 3\kappa(b)} \int_b^\infty (t-s)^{\alpha-1}\frac{\phi(\rho^{-2})}{\rho^{d}}\rho^{d-1}d\rho
\\
&\leq N\|f\|_{L_\infty(\bR^{d+1})} 1_{|s|\leq 3\kappa(b)}  (t-s)^{\alpha-1}\phi(b^{-2}).
\end{align*}
Note that if $|t| \leq \kappa(b)$ and $|s|\leq 3\kappa(b)$ then $|t-s|\leq 4\kappa(b)$.    It follows that for any $(t,x)\in Q_{b}$,  
\begin{align*}
|\cG f(t,x)| &\leq N\|f\|_{L_\infty(\bR^{d+1})} \phi(b^{-2}) \int_{|t-s|\leq 4\kappa(b)} |t-s|^{\alpha-1} ds
\\
& \leq N \|f\|_{L_{\infty}(\R^{d+1})}.
\end{align*}
 This implies the desired estimate.
\end{proof}

\begin{lemma}
\label{outwholetimeestimate}
Let $f\in C_c^\infty(\bR^{d+1})$  have a support in  $(-\infty, -2\kappa(b))\times \bR^d$ for some $b>0$. Then, for  any $(t_1,x), (t_2,x)\in Q_b$, 
\begin{align*}
|\cG f (t_1,x)-\cG f(t_2,x)|   \leq N \|f\|_{L_\infty(\bR^{d+1})},
\end{align*}
where $N$ depends only on $\alpha,d,c$ and $\delta_0$. In particular, we have
$$
\aint_{Q_b}\aint_{Q_b} |\cG f (t_1,x)-\cG f(t_2,x)| dx dt_1 d\tilde{x} dt_2\leq N \|f\|_{L_\infty(\bR^{d+1})}.
$$
 \end{lemma}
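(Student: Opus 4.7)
The plan is to reduce the time-difference estimate to a pointwise bound on $\partial_t \cG f$ and integrate. Since $f$ is supported in $\{s < -2\kappa(b)\}$ and we only consider $t \in (-\kappa(b), \kappa(b))$, the time gap $t-s > \kappa(b) > 0$ is bounded away from zero. Consequently, the truncation at $t-\varepsilon$ in the definition of $\cG f$ becomes irrelevant and
\[
\cG f(t,x) \;=\; \int_{-\infty}^{-2\kappa(b)} \int_{\bR^d} q_{\alpha,1+\alpha}(t-s, y)\, f(s, x-y)\, dy\, ds,
\]
with the kernel being $C^\infty$ in $t$ on the region where $t-s>0$.

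The key identity I will use is $\partial_t q_{\alpha,1+\alpha}=q_{\alpha,2+\alpha}$, which follows from $q_{\alpha,\beta}=D_t^{\beta-\alpha}q$ together with the semigroup property $D_t^{1}D_t^{\beta-\alpha}=D_t^{\beta-\alpha+1}$ (alternatively, one can verify it from the Fourier representation \eqref{fourier of q} and the standard derivative formula for the Mittag--Leffler function). Differentiating in $t$ under the integral sign is justified by the pointwise bounds of Lemma \ref{qestimatebeta123}, which provide an integrable (in $(s,y)$) majorant on any region of the form $\{t-s\geq \tfrac{1}{2}\kappa(b)\}$.

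Applying the fundamental theorem of calculus to $t\mapsto q_{\alpha,1+\alpha}(t-s,y)$ and then Fubini yields
\[
\cG f(t_1,x)-\cG f(t_2,x)
=\int_{t_2}^{t_1}\!\!\int_{-\infty}^{-2\kappa(b)}\!\int_{\bR^d} q_{\alpha,2+\alpha}(t-s,y)\,f(s,x-y)\,dy\,ds\,dt.
\]
By Corollary \ref{qintegrable}(i) with $\beta=2+\alpha$, $\int_{\bR^d}|q_{\alpha,2+\alpha}(t-s,y)|\,dy\leq N(t-s)^{-2}$. Bounding $|f|$ by $\|f\|_{L_\infty}$ and carrying out the $s$-integration gives
\[
|\cG f(t_1,x)-\cG f(t_2,x)|
\leq N\|f\|_{L_\infty}\int_{t_2}^{t_1}(t+2\kappa(b))^{-1}dt
\leq N\|f\|_{L_\infty}\,\frac{|t_1-t_2|}{\kappa(b)}
\leq 2N\|f\|_{L_\infty},
\]
where in the last step we used $t\in(-\kappa(b),\kappa(b))$ so that $t+2\kappa(b)\geq\kappa(b)$, together with $|t_1-t_2|\leq 2\kappa(b)$. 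The ``in particular'' statement is immediate: since the pointwise bound is independent of the auxiliary variable $\tilde x$ (which does not appear in the integrand), averaging over $Q_b\times Q_b$ preserves the same bound.

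I expect the main obstacle to be the technical justification of $\partial_t q_{\alpha,1+\alpha}=q_{\alpha,2+\alpha}$ in the pointwise sense, and of the differentiation/Fubini interchange. Both should be handled by the series representation \eqref{eqn 7.21.5} of $\varphi_{\alpha,\beta}$ combined with the decay estimates \eqref{philarge}--\eqref{betainteger} applied to the integral representation $q_{\alpha,\beta}(t,x)=\int_0^\infty p(r,x)\varphi_{\alpha,\beta}(t,r)\,dr$, which provide the required dominating functions once $t$ is bounded away from zero.
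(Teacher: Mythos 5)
Your proposal is correct and follows essentially the same route as the paper: reduce to the fundamental theorem of calculus in $t$ using $\partial_t q_{\alpha,1+\alpha}=q_{\alpha,2+\alpha}$ (which the paper supplies as \eqref{eqn 07.28 20:25} in Lemma \ref{lem 7.21.1}(iii)), apply Corollary \ref{qintegrable}(i) with $\beta=2+\alpha$ to get the $(t-s)^{-2}$ bound, and integrate in $s$ over $(-\infty,-2\kappa(b))$ and then in $t$ over $(t_2,t_1)$ to obtain a constant independent of $b$. The only difference is the order in which the $s$- and $t$-integrations are carried out, which is immaterial.
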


\begin{proof}
Without loss of generality,   we assume $t_{1}>t_{2}$. Then, since $f(s,x)=0$ for $s\geq -2\kappa(b)$ and $t_1, t_2\geq -\kappa(b)$, it follows that
\begin{align*}
&|\cG f(t_1,x)-\cG f(t_2,x)|
\\
&=\Big|\int_{-\infty}^{t_1}  \int_{\bR^d} q_{\alpha,1+\alpha} (t_1-s,y)f(s,x-y) dy ds\\
&\quad \quad  - \int_{-\infty}^{t_2}  \int_{\bR^d} q_{\alpha,1+\alpha} (t_2-s,y)f(s,x-y) dr dy ds \Big|
\\
&= \Big|\int_{-\infty}^{-2\kappa(b)}  \int_{\bR^d} \left(q_{\alpha,1+\alpha} (t_1-s,x-y)-q_{\alpha,1+\alpha} (t_2-s,x-y)\right)f(s,y) dy ds \Big|.
\end{align*}
 By the fundamental theorem of calculus and \eqref{eqn 07.28 20:25}, we have
\begin{align*}
&\Big|\int_{-\infty}^{-2\kappa(b)}  \int_{\bR^d} \left(q_{\alpha,1+\alpha} (t_1-s,x-y)-q_{\alpha,1+\alpha} (t_2-s,x-y)\right)f(s,y)  dy ds \Big|
\\
&= \Big|\int_{-\infty}^{-2\kappa(b)}  \int_{\bR^d}  \int_{t_2}^{t_1} q_{\alpha,2+\alpha} (t-s,x-y) f(s,y)  dt dy ds \Big|.
\end{align*}
By Corollary \ref{qintegrable} (i),
\begin{align*}
\int_{\bR^d} \int_{t_2}^{t_1} |q_{\alpha,2+\alpha} (t-s,x-y)f(s,y)| dt dy 
\leq N \|f\|_{L_\infty(\bR^{d+1})} \int_{t_2}^{t_1} (t-s)^{-2}dt.
\end{align*}
Therefore,  if  $-\kappa(b)\leq t_2<t_1 \leq \kappa(b)$, 

\begin{align*}
&\Big|\int_{-\infty}^{-2\kappa(b)}  \int_{\bR^d}  \int_{t_2}^{t_1} q_{\alpha,2+\alpha} (t-s,x-y) f(s,y)  dt dy ds\Big|
\\
&\leq N \|f\|_{L_\infty(\bR^{d+1})} \left(\int_{t_2}^{t_1}\int_{-\infty}^{-2\kappa(b)} (t-s)^{-2}ds dt \right)
\\
&\leq N \|f\|_{L_\infty(\bR^{d+1})} \left(\int_{t_2}^{t_1} \frac{1}{\kappa(b)} dt \right)
\leq N \|f\|_{L_\infty(\bR^{d+1})}.
\end{align*}
This certainly proves the lemma. 
\end{proof}

\begin{lemma}
\label{outoutspaceestimate}
Let $f\in C_c^\infty(\bR^{d+1})$     have a support in  $(-\infty,-2\kappa(b))\times B_{2b}^c$ for some $b>0$. Then, for any $(t,x_1), (t,x_2)\in Q_b$, 
\begin{align*}
|\cG f (t,x_1)-\cG f(t,x_2)| \leq N \|f\|_{L_\infty(\bR^{d+1})},
\end{align*}
where $N$ depends only on $\alpha, d,  c$ and $\delta_0$. In particular, we have
$$
\aint_{Q_b}\aint_{Q_b} |\cG f (t,x_1)-\cG f(t,x_2)| dx_1 dt  dx_2 d \tilde{t} \leq N \|f\|_{L_\infty(\bR^{d+1})}.
$$
\end{lemma}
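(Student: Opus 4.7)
The plan is to reduce the spatial Hölder-type bound to an $L^{1}$-type control of $|\nabla q_{\alpha,1+\alpha}|$ on a far cylinder by applying the fundamental theorem of calculus in $x$. Setting $x_{\theta}=\theta x_{1}+(1-\theta)x_{2}\in B_{b}$, the support condition on $f$ yields
$$
\cG f(t,x_{1})-\cG f(t,x_{2})=\int_{-\infty}^{-2\kappa(b)}\int_{|y|>2b}\int_{0}^{1}\nabla_{z}q_{\alpha,1+\alpha}(t-s,x_{\theta}-y)\cdot(x_{1}-x_{2})\,d\theta\,f(s,y)\,dy\,ds.
$$
Using $|x_{1}-x_{2}|\le 2b$, translating $z=x_{\theta}-y$ (so $|z|>b$), and substituting $\tau=t-s$ (so $\tau>\kappa(b)$), the lemma reduces to
$$
I:=\int_{\kappa(b)}^{\infty}\int_{|z|>b}|\nabla q_{\alpha,1+\alpha}(\tau,z)|\,dz\,d\tau\le \frac{N}{b}.
$$

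To estimate $I$, I would split the inner integral at the critical radius $R_{\tau}=(\phi^{-1}(\tau^{-\alpha}))^{-1/2}$, which for $\tau\ge\kappa(b)$ satisfies $R_{\tau}\ge b$. On $\{|z|>R_{\tau}\}$ one has $\tau^{\alpha}\phi(|z|^{-2})<1$, so Lemma \ref{qestimatebeta123}(i) gives $|\nabla q_{\alpha,1+\alpha}(\tau,z)|\le N\tau^{\alpha-1}\phi(|z|^{-2})/|z|^{d+1}$; combined with the concavity bound $\phi(\rho^{-2})\le (R_{\tau}/\rho)^{2}\phi(R_{\tau}^{-2})$ and $\phi(R_{\tau}^{-2})=\tau^{-\alpha}$, this produces the contribution $N\tau^{-1}(\phi^{-1}(\tau^{-\alpha}))^{1/2}$. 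On $\{b<|z|\le R_{\tau}\}$ I would apply Lemma \ref{qestimatebeta123}(ii) with $\beta=1+\alpha\notin\bN$, swap the order of the double integral in $r$ and $z$, and evaluate the sub-level set $\{b<|z|:(\phi(|z|^{-2}))^{-1}\le r\}$, which has volume comparable to $(\phi^{-1}(r^{-1}))^{-d/2}$ on $r\in[\kappa(b)^{\alpha},\tau^{\alpha}]$ and is truncated at $R_{\tau}$ on $r\in[\tau^{\alpha},2\tau^{\alpha}]$. Collecting the pieces yields
$$
\int_{|z|>b}|\nabla q_{\alpha,1+\alpha}(\tau,z)|\,dz\le N\tau^{-1}(\phi^{-1}(\tau^{-\alpha}))^{1/2}+N\tau^{-1-\alpha}\int_{\kappa(b)^{\alpha}}^{\tau^{\alpha}}(\phi^{-1}(r^{-1}))^{1/2}\,dr.
$$

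Integrating in $\tau$ over $(\kappa(b),\infty)$, the first term via the change $v=\phi^{-1}(\tau^{-\alpha})$ and the second via Fubini followed by $s=\phi^{-1}(r^{-1})$, both reduce to $\frac{1}{\alpha}\int_{0}^{b^{-2}}v^{1/2}\phi'(v)/\phi(v)\,dv$. Inequality \eqref{eqn 07.19.14.35} with $n=1$, i.e.\ $v\phi'(v)\le N\phi(v)$, dominates the integrand by $Nv^{-1/2}$, giving a bound of $2N/b$; hence $I\le N/b$. Multiplying by $2b\|f\|_{L_{\infty}}$ yields the pointwise estimate, and the averaged ``in particular'' statement is then trivial. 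The main obstacle is that a single application of Lemma \ref{qestimatebeta123}(i) produces the factor $\tau^{\alpha-1}$, which fails to be integrable at $\tau=\infty$; the refinement (ii), available only in the near-field regime $\tau^{\alpha}\phi(|z|^{-2})\ge 1$, must be matched with (i) at the natural dyadic scale $R_{\tau}$ to recover the long-time decay needed for absolute convergence.
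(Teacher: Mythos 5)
Your proposal is correct and follows essentially the same route as the paper: the fundamental theorem of calculus in $x$, the split of $\int_{\kappa(b)}^{\infty}\int_{|z|>b}|\nabla q_{\alpha,1+\alpha}|$ at the critical radius $(\phi^{-1}(\tau^{-\alpha}))^{-1/2}$, Lemma \ref{qestimatebeta123}(i) in the far region and (ii) with Fubini in the near region, all reducing to $\int_{\kappa(b)^{\alpha}}^{\infty}(\phi^{-1}(r^{-1}))^{1/2}r^{-1}dr\leq Nb^{-1}$ (which you evaluate by the substitution $s=\phi^{-1}(r^{-1})$ and $s\phi'(s)\leq N\phi(s)$, where the paper instead applies \eqref{phiratio} directly). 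One small slip: concavity gives $\phi(\rho^{-2})\geq (R_{\tau}/\rho)^{2}\phi(R_{\tau}^{-2})$ for $\rho>R_{\tau}$, not $\leq$; but the far-region contribution you claim already follows from the monotonicity bound $\phi(\rho^{-2})\leq\phi(R_{\tau}^{-2})=\tau^{-\alpha}$, so nothing is lost.
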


\begin{proof}
Recall $f(s,y)=0$ if $s\geq -2\kappa(b)$ or $|y|\leq 2b$. Thus,  if $t>-\kappa(b)$,  
\begin{align*}
&|\cG f(t,x_1)-\cG f(t,x_2)|
\\
&=\Big|\int_{-\infty}^{-2\kappa(b)}  \int_{|y|\geq 2b} \left(q_{\alpha,1+\alpha} (t-s,x_1-y)-q_{\alpha,1+\alpha} (t-s,x_2-y)\right) f(s,y) dy ds \Big|.
\end{align*}
By the fundamental theorem of calculus, for any $x_1, x_2 \in B_ b$ and $t>-\kappa(b)$, 
\begin{align}
&\Big|\int_{-\infty}^{-2\kappa(b)}  \int_{|y|\geq 2b} \left(q_{\alpha,1+\alpha} (t-s,x_1-y)-q_{\alpha,1+\alpha} (t-s,x_2-y)\right) f(s,y) dy ds \Big|   \nonumber
\\
&\leq \Big|\int_{-\infty}^{-2\kappa(b)}  \int_{|y|\geq 2b} \int_0^1 \nabla q_{\alpha,1+\alpha} (t-s, \theta(x_{1},x_{2},u)-y)\cdot (x_2-x_1) f(s,y) du dy ds \Big|   \nonumber
\\
& \leq N b \|f\|_{L_\infty(\bR^{d+1})}  \int_{-\infty}^{ -2\kappa(b)} \int_{|y|\geq b} |\nabla q_{\alpha,1+\alpha} (t-s,y)| dy ds  \nonumber
\\
& \leq N b \|f\|_{L_\infty(\bR^{d+1})}  \int_{\kappa(b)}^{\infty} \int_{|y|\geq b} |\nabla q_{\alpha,1+\alpha} (s,y)| dy ds,  \label{eqn 8.2.1}
\end{align}
 where $\theta(x_{1},x_{2},u)=(1-u)x_{1}+ux_{2}$.   By Lemma \ref{qestimatebeta123},
\begin{align*}
&\int_{\kappa(b)}^{\infty} \int_{|y|\geq b} |\nabla q_{\alpha,1+\alpha} (s,y)| dy ds
\\
&\leq N \int_{\kappa(b)}^\infty \int_{\left(\phi^{-1}(s^{-\alpha})\right)^{-1/2}}^\infty s^{\alpha-1} \frac{\phi(\rho^{-2})}{\rho^2} d\rho ds
\\
& \quad + N\int_{\kappa(b)}^\infty \int_b^{\left(\phi^{-1}(s^{-\alpha})\right)^{-1/2}} \int_{\left(\phi(\rho^{-2}) \right)^{-1}}^{2s^\alpha} \left(\phi^{-1}(r^{-1}) \right)^{(d+1)/2} s^{-\alpha-1} \rho^{d-1} dr d\rho ds.
\end{align*}
We now estimate the last two integrals above.  First, by \eqref{int phi},
\begin{align}
& \int_{\left(\phi^{-1}(s^{-\alpha})\right)^{-1/2}}^\infty s^{\alpha-1} \frac{\phi(\rho^{-2})}{\rho^2} d\rho    \nonumber
\\
&\leq \left(\phi^{-1}(s^{-\alpha})\right)^{1/2} \int_{\left(\phi^{-1}(s^{-\alpha})\right)^{-1/2}}^\infty s^{\alpha-1} \frac{\phi(\rho^{-2})}{\rho} d\rho   \nonumber
\\
&\leq \left(\phi^{-1}(s^{-\alpha})\right)^{1/2} s^{-1}.  \nonumber 
\end{align}
Therefore, by  the change of the variables $s^{\alpha}\to s$,
\begin{align}
\nonumber
&\int_{\kappa(b)}^\infty \int_{\left(\phi^{-1/2}(s^{-\alpha})\right)^{-1/2}}^\infty s^{\alpha-1} \frac{\phi(\rho^{-2})}{\rho^2} d\rho ds
\\
&\leq N \int_{\kappa(b)}^\infty \left(\phi^{-1}(s^{-\alpha})\right)^{1/2} s^{-1} ds
= N \int_{(\kappa(b))^{\alpha}}^\infty \left(\phi^{-1}(s^{-1})\right)^{1/2} s^{-1} ds.   \label{eqn 8.2.6}
\end{align}
Second, by Fubini's theorem,
\begin{align}
&\int_{\kappa(b)}^\infty \int_b^{\left(\phi^{-1}(s^{-\alpha})\right)^{-1/2}} \int_{\left(\phi(\rho^{-2}) \right)^{-1}}^{2s^\alpha} \left(\phi^{-1}(r^{-1}) \right)^{(d+1)/2} s^{-\alpha-1} \rho^{d-1} dr d\rho ds    \nonumber
\\
&\leq \int_{\kappa(b)}^\infty  \int_{(\kappa(b))^\alpha}^{2s^\alpha} \int_b^{\left(\phi^{-1}(r^{-1})\right)^{-1/2}} \left(\phi^{-1}(r^{-1}) \right)^{(d+1)/2} s^{-\alpha-1} \rho^{d-1} d\rho dr ds    \nonumber
\\
&\leq \int_{\kappa(b)}^\infty \int_{(\kappa(b))^\alpha}^{2s^\alpha} \left(\phi^{-1}(r^{-1}) \right)^{1/2} s^{-\alpha-1} dr ds   \nonumber
\\
&\leq \int_{(\kappa(b))^\alpha}^\infty \int_{(r/2)^{1/\alpha}}^\infty \left(\phi^{-1}(r^{-1}) \right)^{1/2} s^{-\alpha-1} ds dr     \nonumber
\\
&\leq N \int_{(\kappa(b))^\alpha}^\infty \left(\phi^{-1}(r^{-1})\right)^{1/2} r^{-1} dr.      \label{eqn 8.2.7}
\end{align}
Note that if  $s\geq (\kappa(b))^{\alpha}$, then by (\ref{phiratio}) with $R=\phi^{-1}((\kappa(b))^{-\alpha})$ and $r=\phi^{-1}(s^{-1})$, we have 
$$
\phi^{-1}(s^{-1})\leq s^{-1}(\kappa(b))^{\alpha}  \phi^{-1}((\kappa(b))^{-\alpha}).
$$
Theorefore,
\begin{align*}
\int_{(\kappa(b))^\alpha}^\infty \left(\phi^{-1}(s^{-1})\right)^{1/2} s^{-1} ds \leq & N \left( \phi^{-1}((\kappa(b))^{-\alpha})\right)^{1/2} (\kappa(b))^{\alpha/2} 
\int_{(\kappa(b))^\alpha}^\infty s^{-3/2} ds
\\
= & N \left( \phi^{-1}((\kappa(b))^{-\alpha})\right)^{1/2}=N b^{-1}.
\end{align*}
Combining this with  \eqref{eqn 8.2.6} and \eqref{eqn 8.2.7}, and going back to \eqref{eqn 8.2.1},  we get  
$$
|\cG f(t,x_1)-\cG f(t,x_2)|\leq N b \|f\|_{L_\infty(\bR^{d+1})} b^{-1}=N\|f\|_{L_\infty(\bR^{d+1})}. 
$$
Therefore, the lemma is proved.
\end{proof}

\begin{lemma} \label{outinspaceestimate}
Let $f\in C_c^\infty(\bR^{d+1})$  have a support in   $(-\infty,-2\kappa(b))\times B_{3b}$ for some $b>0$. Then for any $(t,x)\in Q_b$
$$
|\cG f(t,x)|\leq N  \|f\|_{L_\infty(\bR^{d+1})},
$$
where $N$ depends only on $\alpha,d, c$ and $\delta_0$. In particular, 
\begin{align*}
&\aint_{Q_b}|\cG f (t,x)|dx dt \leq N \|f\|_{L_\infty(\bR^{d+1})}.
\end{align*}

\end{lemma}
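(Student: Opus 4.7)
The plan is to reduce $|\cG f(t,x)|$ to estimating a double integral of $|q_{\alpha,1+\alpha}|$ over $[\kappa(b),\infty)\times B_{4b}$, and then to bound that integral by a constant. Since $(t,x)\in Q_b$ and $f$ is supported in $(-\infty,-2\kappa(b))\times B_{3b}$, the change of variable $\tau=t-s$ forces $\tau\geq \kappa(b)$, while the constraint $x-y\in B_{3b}$ with $x\in B_b$ forces $|y|\leq 4b$. Thus
$$
|\cG f(t,x)|\leq \|f\|_{L_\infty(\bR^{d+1})}\int_{\kappa(b)}^{\infty}\int_{|y|\leq 4b}|q_{\alpha,1+\alpha}(\tau,y)|\,dy\,d\tau,
$$
and it suffices to show this double integral is bounded by a constant depending only on $\alpha,d,c,\delta_0$, since the pointwise bound trivially implies the stated averaged one.

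The naive estimate $\int_{\bR^d}|q_{\alpha,1+\alpha}(\tau,y)|\,dy\leq N\tau^{-1}$ from Corollary \ref{qintegrable}(i) is not integrable on $[\kappa(b),\infty)$, so we must use the spatial restriction $|y|\leq 4b$ to generate extra $\tau$-decay. Set $R(\tau):=(\phi^{-1}(\tau^{-\alpha}))^{-1/2}$; then $R(\tau)\geq b$ for $\tau\geq\kappa(b)$, and $R(\tau)\leq 4b$ iff $\tau\leq\kappa(4b)$, with $\kappa(4b)/\kappa(b)\leq 16^{1/\alpha}$ by \eqref{phiratio}. I split the $y$-integral at $|y|=R(\tau)$. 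On the outer part $\{R(\tau)\leq |y|\leq 4b\}$ (nonempty only for $\tau\leq\kappa(4b)$), Lemma \ref{qestimatebeta123}(i) gives $|q_{\alpha,1+\alpha}(\tau,y)|\leq N\tau^{\alpha-1}\phi(|y|^{-2})/|y|^d$, and Lemma \ref{gammaversionj} with $\lambda=R(\tau)^{-1}$ yields $\int_{R(\tau)}^{4b}\rho^{-1}\phi(\rho^{-2})\,d\rho\leq N\tau^{-\alpha}$, so this region contributes at most $N\int_{\kappa(b)}^{\kappa(4b)}\tau^{-1}\,d\tau\leq N$. On the inner part $|y|<R(\tau)\wedge 4b$, I invoke Lemma \ref{qestimatebeta123}(ii) (valid since $\beta=1+\alpha\notin\bN$) and swap the order of integration via Fubini, exactly as in the proof of Corollary \ref{qintegrable}(i), by rewriting $r\geq(\phi(|y|^{-2}))^{-1}$ as $|y|\leq(\phi^{-1}(r^{-1}))^{-1/2}$.

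The main obstacle is the bookkeeping that follows. After the swap, the spatial measure yields an integrand $c_d\min\{(\phi^{-1}(r^{-1}))^{d/2}(4b\wedge R(\tau))^d,1\}$ multiplied by $\tau^{-1-\alpha}$, and one must split the $r$-integral at the threshold where $(\phi^{-1}(r^{-1}))^{-1/2}$ crosses $4b\wedge R(\tau)$, treating $\tau\geq\kappa(4b)$ and $\kappa(b)\leq\tau<\kappa(4b)$ separately. For $\tau\geq\kappa(4b)$, the key quantitative input is the estimate $(\phi^{-1}(r^{-1}))^{d/2}\leq N(4b)^{-d}\kappa(4b)^{d\alpha/2}r^{-d/2}$ on $r\geq\kappa(4b)^\alpha$, which follows from \eqref{phiratio} applied with $R=(4b)^{-2}$; combined with the prefactor $\tau^{-1-\alpha}$ this produces $\tau$-integrable decay on $[\kappa(4b),\infty)$, uniformly in $b$. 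For $\kappa(b)\leq\tau<\kappa(4b)$, the estimate $(\phi^{-1}(r^{-1}))^{d/2}\leq R(\tau)^{-d}$ on $[\tau^\alpha,2\tau^\alpha]$ reduces the contribution to another $N\int_{\kappa(b)}^{\kappa(4b)}\tau^{-1}d\tau\leq N$. Summing all pieces gives the desired pointwise bound $|\cG f(t,x)|\leq N\|f\|_{L_\infty}$ on $Q_b$.
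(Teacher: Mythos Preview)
Your argument is correct and follows essentially the same route as the paper. Both reduce to bounding $\int_{\kappa(b)}^\infty\int_{B_{4b}}|q_{\alpha,1+\alpha}(\tau,y)|\,dy\,d\tau$ and rely on Lemma~\ref{qestimatebeta123}(i)--(ii) together with \eqref{phiratio}; the only organizational difference is that the paper splits first in time at $\tau=\kappa(4b)$ (handling $[\kappa(b),\kappa(4b)]$ directly via Corollary~\ref{qintegrable}(i) and the region $\tau\geq\kappa(4b)$ via Lemma~\ref{qestimatebeta123}(ii) and Fubini), whereas you split first in space at $|y|=R(\tau)$ and then in time, which forces you to re-derive the $N\tau^{-1}$ bound on the short interval by hand---slightly more bookkeeping for the same outcome.
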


\begin{proof}
By  assumption,  $(t,x)\in Q_b$,
\begin{align*}
|\cG f(t,x)| \leq&\int_{-\infty}^{-2\kappa(b)} \int_{B_{3b}} |q_{\alpha,1+\alpha} (t-s,x-y)f(s,y)|dyds
\\
\leq& N \|f\|_{L_\infty} \int_{-\infty}^{-2\kappa(b)} \int_{B_{3b}} |q_{\alpha,1+\alpha} (t-s,x-y)|dyds
\\
\leq& N \|f\|_{L_\infty} \int_{\kappa(b)}^{\infty} \int_{B_{4b}} |q_{\alpha,1+\alpha} (s,y)|dyds \\
\leq& N \|f\|_{L_\infty} \left( I +II \right),\end{align*}
where
$$
I=\int^{\kappa(4b)}_{\kappa(b)} \int_{B_{4b}} |q_{\alpha,1+\alpha} (s,y)|dyds, \quad  \quad 
II=\int_{\kappa(4b)}^{\infty} \int_{B_{4b}} |q_{\alpha,1+\alpha} (s,y)|dyds.
$$
By Corollary \ref{qintegrable} (i) and \eqref{phiratio}  
\begin{align*}
I\leq& N \int^{\kappa(4b)}_{\kappa(b)} s^{-1}ds \leq N  
 \log\left(\frac{\kappa(4b)}{\kappa(b)} \right) 
\\
=& N \log\left(\frac{\phi(b^{-2})}{\phi(b^{-2}/16)} \right)
\\
\leq& N \log\left(16\right).
\end{align*}
 By Lemma \ref{qestimatebeta123},
$$
II\leq N \int_{\kappa(4b)}^{\infty} \int_{B_{4b}}\int_{(\phi(|y|^{-2}))^{-1}}^{2s^{\alpha}} (\phi^{-1}(r^{-1}))^{d/2} s^{-\alpha-1} dr dy ds.
$$
By Fubini's theorem, if $s>\kappa(4b)$,
\begin{align*}
&\int_{B_{4b}}\int_{(\phi(|y|^{-2}))^{-1}}^{2s^{\alpha}} (\phi^{-1}(r^{-1}))^{d/2} s^{-\alpha-1} dr dy
\\
&=\int_{B_{4b}}\int_{(\phi(|y|^{-2}))^{-1}}^{(\phi(b^{-2}/16))^{-1}} (\phi^{-1}(r^{-1}))^{d/2} s^{-\alpha-1} dr dy
\\
&\quad +\int_{B_{4b}}\int_{(\phi(b^{-2}/16))^{-1}}^{2s^{\alpha}} (\phi^{-1}(r^{-1}))^{d/2} s^{-\alpha-1} dr dy
\\
&\leq \int_0^{\left(\phi(b^{-2}/16) \right)^{-1}} \int_{|y|\leq \left(\phi^{-1}(r^{-1})\right)^{-1/2}} (\phi^{-1}(r^{-1}))^{d/2} s^{-\alpha-1} dy dr 
\\
& \quad + \int_{\left(\phi(b^{-2}/16) \right)^{-1}}^{2s^\alpha} \int_{B_{4b}} (\phi^{-1}(r^{-1}))^{d/2} s^{-\alpha-1} dy dr
\\
&\leq N \left(\phi(b^{-2}/16) \right)^{-1} s^{-\alpha-1} + N b^d   s^{-\alpha-1} \int_{\left(\phi(b^{-2}/16) \right)^{-1}}^{2s^\alpha} (\phi^{-1}(r^{-1}))^{d/2} dr.
\end{align*}
Obviously,
\begin{align*}
\int_{\kappa(4b)}^{\infty} \left(\phi(b^{-2}/16) \right)^{-1} s^{-\alpha-1}ds \leq \frac{1}{\alpha}.
\end{align*}
Also, by Fubini's theorem and \eqref{phiratio} with $R=b^{-2}/16$ and $r=\phi^{-1}(r^{-1})$
\begin{align*}
&\int_{\kappa(4b)}^{\infty} b^d   s^{-\alpha-1} \int_{\left(\phi(b^{-2}/16) \right)^{-1}}^{2s^\alpha} (\phi^{-1}(r^{-1}))^{d/2} dr ds
\\
&\leq \int_{\left(\phi(b^{-2}/16) \right)^{-1}}^{\infty} \int_{(r/2)^{1/\alpha}}^\infty b^d   s^{-\alpha-1} (\phi^{-1}(r^{-1}))^{d/2} ds dr
\\
&\leq N \int_{\left(\phi(b^{-2}/16) \right)^{-1}}^{\infty} b^d r^{-1} (\phi^{-1}(r^{-1}))^{d/2} dr
\\
& \leq N \int_{\left(\phi(b^{-2}/16) \right)^{-1}}^{\infty} \left(\phi(b^{-2}/16) \right)^{-d/2} r^{-d/2-1} dr \leq N.
\end{align*}
Therefore, $I, II$ are bounded by a constant independent of $b$,   and 
the lemma is proved.
\end{proof}

\begin{corollary}
\label{outwholeestimate}
Let $f\in C_c^\infty(\bR^{d+1})$ and $b>0$. Then,
\begin{align*}
\aint_{Q_b}\aint_{Q_b}|\cG f (t,x)-\cG f(s,y)|dtdxdsdy \leq N \|f\|_{L_\infty(\bR^{d+1})},
\end{align*}
where $N$ depends only on $\alpha,d, c$ and $\delta_0$.
\end{corollary}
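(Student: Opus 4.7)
The plan is to decompose $f$ into pieces matching the hypotheses of the preceding four lemmas, then combine. First I would write $f=f_1+f_2$ using a smooth time cutoff $\chi_1(t)\in C^{\infty}(\bR)$ with $\chi_1=1$ on $[-2\kappa(b),\infty)$ and $\chi_1=0$ on $(-\infty,-3\kappa(b)]$, setting $f_1=\chi_1 f$ and $f_2=(1-\chi_1)f$. Then $\|f_i\|_{L_\infty}\le \|f\|_{L_\infty}$, and $f_1$ has support in $(-3\kappa(b),\infty)\times \bR^d$ while $f_2$ has support in $(-\infty,-2\kappa(b))\times \bR^d$.

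For $f_1$ I would apply Lemma \ref{inwholeestimate} directly and use the trivial estimate
\begin{equation*}
\aint_{Q_b}\aint_{Q_b}|\cG f_1(t,x)-\cG f_1(s,y)|\,dtdxdsdy\le 2\aint_{Q_b}|\cG f_1(t,x)|\,dtdx\le N\|f\|_{L_\infty}.
\end{equation*}
So this part requires no work beyond invoking the earlier lemma.

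For $f_2$ I would perform a second split $f_2=f_{2,1}+f_{2,2}$ using a smooth spatial cutoff $\chi_2(x)\in C_c^{\infty}(\bR^d)$ with $\chi_2=1$ on $B_{2b}$ and $\chi_2=0$ outside $B_{3b}$, setting $f_{2,1}=\chi_2 f_2$ and $f_{2,2}=(1-\chi_2)f_2$. Then $f_{2,1}$ has support in $(-\infty,-2\kappa(b))\times B_{3b}$ so Lemma \ref{outinspaceestimate} gives the pointwise estimate $|\cG f_{2,1}(t,x)|\le N\|f\|_{L_\infty}$ on $Q_b$, and the oscillation contribution is then trivially bounded. On the other hand $f_{2,2}$ has support in $(-\infty,-2\kappa(b))\times B_{2b}^c$, so I can apply the triangle inequality
\begin{equation*}
|\cG f_{2,2}(t,x)-\cG f_{2,2}(s,y)|\le |\cG f_{2,2}(t,x)-\cG f_{2,2}(s,x)|+|\cG f_{2,2}(s,x)-\cG f_{2,2}(s,y)|,
\end{equation*}
and bound the first term by Lemma \ref{outwholetimeestimate} (the hypothesis on the time support is satisfied, since $f_{2,2}$ is supported in a subset of $(-\infty,-2\kappa(b))\times\bR^d$) and the second by Lemma \ref{outoutspaceestimate} (whose hypothesis on space-time support is exactly the support of $f_{2,2}$). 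Summing the three contributions and then averaging over $Q_b\times Q_b$ yields the claim.

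There is no real obstacle here; the argument is a bookkeeping exercise in splitting the source and citing the right lemma for each piece. The only small point to verify is that the cutoffs $\chi_1,\chi_2$ applied to $f\in C_c^\infty(\bR^{d+1})$ still produce test functions to which Lemmas \ref{inwholeestimate}--\ref{outinspaceestimate} apply (which is immediate since those lemmas only require $f\in C_c^\infty(\bR^{d+1})$ with the stated support and are linear in $f$), and that $\|f_i\|_{L_\infty}\le \|f\|_{L_\infty}$, which follows from $0\le \chi_i\le 1$.
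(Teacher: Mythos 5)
Your proof is correct and follows essentially the same route as the paper's: a smooth time cutoff separating the support into $(-3\kappa(b),\infty)\times\bR^d$ (handled by Lemma \ref{inwholeestimate}) and $(-\infty,-2\kappa(b))\times\bR^d$, followed by a spatial cutoff and the triangle inequality to reduce the latter to Lemmas \ref{outwholetimeestimate}, \ref{outoutspaceestimate} and \ref{outinspaceestimate}. The only cosmetic point is that your cutoffs should be shrunk slightly (as the paper does with $7\kappa(b)/3$, $8\kappa(b)/3$ and $B_{7b/3}$, $B_{8b/3}$) so that the resulting supports lie strictly inside the open sets required by the lemmas rather than possibly touching their boundaries.
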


\begin{proof}
{\textbf{Step 1}}. Suppose that $f$ has a support in $(-\infty,-2\kappa(b))\times \bR^d$.

 Take $\zeta \in C_c^\infty(\bR^{d})$ such that $0\leq \zeta \leq 1$, $\zeta=1$ on $B_{7b/3}$ and $\zeta=0$ outside of $B_{8b/3}$. Then,
\begin{align*}
|\cG f (t,x)-\cG f(s,y)| &\leq |\cG f (t,x)-\cG f(t,y)|+|\cG f (t,y)-\cG f(s,y)|
\\
&\leq |\cG (\zeta f) (t,x)-\cG (\zeta f)(t,y)|
\\
&\quad+|\cG (1-\zeta)f (t,x)-\cG (1-\zeta)f(t,y)|+|\cG f (t,y)-\cG f(s,y)| \\
&\leq |\cG (\zeta f) (t,x)|+|\cG (\zeta f)(t,y)|
\\
&\quad+|\cG (1-\zeta)f (t,x)-\cG (1-\zeta)f(t,y)|+|\cG f (t,y)-\cG f(s,y)|.
\end{align*}
Applying Lemma \ref{outwholetimeestimate}, Lemma \ref{outoutspaceestimate} and Lemma \ref{outinspaceestimate} to estimate $|\cG f (t,y)-\cG f(s,y)|$, $|\cG (1-\zeta)f (t,x)-\cG (1-\zeta)f(t,y)|$ and  $|\cG (\zeta f) (t,x)|+|\cG (\zeta f)(t,y)|$ respectively,  we get the desired estimate.

{\textbf{Step 2}}. General case.  Choose a function $\xi=\xi(t) \in C^\infty(\bR)$ such that $0\leq \xi\leq 1$, $\xi=1$ on $(-\infty, -8\kappa(b)/3)$ and $\xi(t)=0$ for $t\geq -7\kappa(b)/3$.  Let $f_1=\xi f$ and $f_2=(1-\xi) f$. Then $\cG f= \cG f_1 + \cG f_2$ in $\bR^{d+1}$.  Note that $f_1$ satisfies  the assumption in Step 1, and $f_2$ satisfies the condition in Lemma \ref{inwholeestimate}. Therefore,  
\begin{align*}
&\aint_{Q_b} |\cG f(t,x)-(\cG f)_{Q_b}| dtdx
\\
&\leq \aint_{Q_b} |\cG f_1(t,x)-(\cG f_1)_{Q_b}| dtdx + \aint_{Q_b} |\cG f_2(t,x)-(\cG f_2)_{Q_b}| dtdx
\\
&\leq N \|f\|_{L_\infty(\bR^{d+1})}.
\end{align*}
The corollary is proved.
\end{proof}

For  locally integrable functions $h$ on $\bR^{d+1}$, we define the BMO semi-norm of $h$ on $\bR^{d+1}$ as
\begin{equation*}
\|h\|_{BMO(\bR^{d+1})}=\sup_{Q\in \bQ} \aint_Q |h(t,x)-h_Q| dtdx
\end{equation*}
where $h_Q=\aaint_Q h(t,x)dtdx$ and
\begin{equation*}
\bQ:=\{Q_b(t_0,x_0) : b>0, (t_0,x_0)\in \bR^{d+1} \}.
\end{equation*}

For  measurable functions $h(t,x)$ on $\bR^{d+1}$, we define the sharp function
\begin{align*}
h^{\#}(t,x)=\sup_{(t,x)\in Q_b(r,z)} \aint_{Q_b(r,z)}|h(s,y)-h_{Q_b(r,z)}|dsdy.
\end{align*}

\begin{theorem}[Fefferman-Stein Theorem]
\label{feffermanstein}
For any $1<p<\infty$ and $h\in L_p(\bR^{d+1})$,
\begin{equation*}
N^{-1}\|h^{\#}\|_{L_p(\bR^{d+1})}\leq \|h\|_{L_p(\bR^{d+1})}\leq N\|h^{\#}\|_{L_p(\bR^{d+1})},
\end{equation*}
where $N>1$ depends on $\alpha,d,p,c$ and  $\delta_0$.
\end{theorem}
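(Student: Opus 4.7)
The first bound $\|h^{\#}\|_{L_p(\bR^{d+1})}\leq N\|h\|_{L_p(\bR^{d+1})}$ is the easy direction. For any cylinder $Q=Q_b(r,z)$ containing $(t,x)$,
\[
\aint_Q |h(s,y)-h_Q|\,ds\,dy \leq 2\aint_Q |h(s,y)|\,ds\,dy \leq 2(M_{\mathcal Q}h)(t,x),
\]
where $M_{\mathcal Q}$ denotes the Hardy--Littlewood maximal operator over the family $\mathcal Q:=\{Q_b(r,z):b>0,\,(r,z)\in\bR^{d+1}\}$. Thus $h^{\#}\leq 2M_{\mathcal Q}h$, and the claim reduces to the strong $(p,p)$ bound for $M_{\mathcal Q}$, which in turn follows from a Vitali-type covering lemma once one has the doubling property $|Q_{2b}|\leq C(\alpha,d,c,\delta_0)|Q_b|$. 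Since $|Q_b|=2|B_1|b^d\kappa(b)$, it suffices to bound $\kappa(2b)/\kappa(b)=(\phi(b^{-2})/\phi(b^{-2}/4))^{1/\alpha}\leq 4^{1/\alpha}$, which is immediate from \eqref{phiratio}.

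The reverse inequality is the substantial part. The plan is to realize $(\bR^{d+1},d,\mathrm{Leb})$ as a space of homogeneous type with quasi-distance $d((t,x),(s,y)):=|x-y|\vee\kappa^{-1}(|t-s|)$, so that the $d$-balls coincide (up to a constant) with the cylinders $Q_b$. For the quasi-triangle inequality one needs the doubling property of $\kappa^{-1}$. Setting $b=\kappa^{-1}(s)$ and $\lambda:=\kappa^{-1}(2s)/b$, the identity $\phi(b^{-2}/\lambda^2)=2^{-\alpha}\phi(b^{-2})$ combined with the lower bound $c\lambda^{2\delta_0}\leq\phi(b^{-2})/\phi(b^{-2}/\lambda^2)$ coming from \eqref{phiratio} yields $\lambda\leq(2^\alpha/c)^{1/(2\delta_0)}$. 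Once $d$ is a doubling quasi-metric and Lebesgue measure is doubling with respect to the $d$-balls, the classical Fefferman--Stein theorem on spaces of homogeneous type (see, e.g., Coifman--Weiss or Grafakos) applies verbatim.

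Concretely, one establishes the good-$\lambda$ inequality
\[
\bigl|\{M_{\mathcal Q}h>2\lambda\}\cap\{h^{\#}\leq\gamma\lambda\}\bigr|\leq C_0\gamma\,|\{M_{\mathcal Q}h>\lambda\}|,\qquad \lambda>0,\ \gamma\in(0,1),
\]
with $C_0=C_0(\alpha,d,c,\delta_0)$. The proof performs a Calderón--Zygmund decomposition of the level set $\{M_{\mathcal Q}h>\lambda\}$ into disjoint maximal cylinders $Q_j$ whose fixed enlargements $\widehat Q_j$ are not contained in the level set, and then uses Chebyshev's inequality on $Q_j$ applied to $h-h_{\widehat Q_j}$, with $(h-h_{\widehat Q_j})_{Q_j}$ controlled by a point value $h^{\#}(t_0,x_0)\leq\gamma\lambda$. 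Integrating the good-$\lambda$ inequality against $\lambda^{p-1}\,d\lambda$ and choosing $\gamma$ so small that $C_02^p\gamma<1$ yields $\|M_{\mathcal Q}h\|_{L_p}\leq N\|h^{\#}\|_{L_p}$; since Lebesgue differentiation (available by doubling) gives $|h|\leq M_{\mathcal Q}h$ a.e., the desired bound follows.

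The main obstacle is the doubling verification for the anisotropic cylinders $Q_b$, but as shown above it reduces entirely to the scalar estimates on $\phi$ and $\kappa$ provided by Assumption \ref{ass bernstein}; the remainder of the argument is the standard Calderón--Zygmund machinery on spaces of homogeneous type and requires no further ingredients specific to $\phi$.
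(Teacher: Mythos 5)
Your proposal is correct and follows essentially the same route as the paper: the paper simply cites the Fefferman--Stein theorem for spaces of homogeneous type from Stein's book and remarks that the cylinders $Q_b(t,x)$ satisfy the required engulfing/doubling conditions thanks to \eqref{phiratio}, which is exactly the reduction you carry out (your bounds $\kappa(2b)/\kappa(b)\leq 4^{1/\alpha}$ and $\kappa^{-1}(2s)/\kappa^{-1}(s)\leq (2^{\alpha}/c)^{1/(2\delta_0)}$ are the substance of that verification). The only difference is that you additionally sketch the standard good-$\lambda$ machinery that the paper outsources entirely to the reference; this is fine but not needed once the homogeneous-type structure is established.
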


\begin{proof}
See  \cite[Theorem I.3.1, Theorem IV.2.2]{stein1993harmonic}.  We only remark that  due to (\ref{phiratio}),  the balls $Q_b(s,y)$ satisfy the conditions (i)-(iv) in  \cite[Section 1.1] {stein1993harmonic}:
\begin{enumerate}[(i)]
\item $Q_c(t,x)\cap Q_c(s,y)\neq\varnothing$ implies $Q_c(s,y)\subset Q_{N_1 c}(t,x)$;
\\
\item $|Q_{N_1 c}(t,x)|\leq N_2|Q_c(t,x)|$;
\\
\item $\cap_{c>0} \bar{Q}_c(t,x)=\{(t,x)\}$ and $\cup_c Q_c(t,x)=\bR^{d+1}$;
\\
\item for each open set $U$ and $c>0$, the function $(t,x)\to|Q_c(t,x)\cap U|$ is continuous.
\end{enumerate}
\end{proof}

Here is the main result of this section.

\begin{theorem}

(i) For any $f\in L_2(\bR^{d+1})\cap L_\infty(\bR^{d+1})$,
\begin{align} \label{bmoestimate}
\|\cG f\|_{BMO(\bR^{d+1})} \leq N(\alpha,d,c, \delta_0) \|f\|_{L_\infty(\bR^{d+1})}.
\end{align}

(ii) For any $p,q\in(1,\infty)$ and $f\in C_c^\infty (\bR^{d+1})$,
\begin{align} \label{qpestimate}
\|\cG f\|_{L_q(\bR;L_p(\R^{d}))} \leq N(\alpha,d,c, \delta_0,p) \|f\|_{L_q(\bR;L_p(\R^{d}))}.
\end{align}

\end{theorem}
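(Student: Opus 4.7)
For part (i), I observe that $\cG$ is translation-invariant (being a convolution), so it suffices to prove the estimate for the fixed cylinder $Q_b = Q_b(0,0)$ applied to appropriate translates of $f$. Combined with the elementary inequality $\aint_{Q_b}|h - h_{Q_b}|\, dt dx \leq \aint_{Q_b}\aint_{Q_b}|h(t_1,x_1) - h(t_2,x_2)|\, dt_1 dx_1 dt_2 dx_2$, part (i) is then an immediate consequence of Corollary \ref{outwholeestimate}.

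For part (ii) in the diagonal case $p = q$, the plan is to combine the $L_2$-boundedness (Lemma \ref{22estimate}) with the BMO estimate from (i) via a sharp-function argument. Fix $(t,x) \in \bR^{d+1}$ and a cylinder $Q = Q_b(r,z) \ni (t,x)$. I would split $f = f_1 + f_2$ with $f_1 = f\mathbf{1}_{Q^*}$ for a suitable enlargement $Q^*$ of $Q$. The $L_2$-estimate on $\cG f_1$ will yield a contribution of $\leq N(\cM|f|^2)(t,x)^{1/2}$ to the mean oscillation on $Q$, where $\cM$ is the parabolic maximal operator associated with $\bQ$. For $f_2$, I plan to use refined versions of Lemmas \ref{inwholeestimate}--\ref{outinspaceestimate} obtained by inserting H\"older's inequality in the tail integrals (trading $\|f\|_{L_\infty}$ for local $L_2$-averages), which should give the same type of bound. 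Taking the supremum over $Q \ni (t,x)$ yields $(\cG f)^\#(t,x) \leq N(\cM|f|^2)(t,x)^{1/2}$; then Theorem \ref{feffermanstein} together with the $L_{p/2}$-boundedness of $\cM$ produces \eqref{qpestimate} for $p \in (2,\infty)$. The range $p \in (1,2)$ follows by duality against $\cG^*$, which is of the same structural form as $\cG$ (a time-reversed convolution with the same kernel).

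For part (ii) with $p \neq q$, I would view $\cG$ as a convolution in $t$ with operator-valued kernel $K(\tau) \in \cL(L_p(\bR^d))$ given by $K(\tau)g := q_{\alpha,1+\alpha}(\tau,\cdot) \ast \phi(\Delta)g$. Having already established the diagonal bound \eqref{qpestimate} for $p = q$, the vector-valued Calder\'on-Zygmund theorem reduces the off-diagonal case to verifying the H\"ormander-type condition
\[
\sup_{\sigma \neq 0}\int_{|\tau| > 2|\sigma|}\|K(\tau) - K(\tau-\sigma)\|_{\cL(L_p(\bR^d))}\, d\tau < \infty.
\]
This should follow from the bounds on $\partial_\tau q_{\alpha,1+\alpha} = q_{\alpha,2+\alpha}$ in Lemma \ref{qestimatebeta123} and Corollary \ref{qintegrable}, combined with Young's convolution inequality to pass from $L_1$-in-$x$ estimates to $L_p$-operator norms, in the spirit of Lemma \ref{outoutspaceestimate}.

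The main obstacle will be producing the pointwise sharp-function bound in part (ii) for $p=q$: Lemmas \ref{outwholetimeestimate}--\ref{outinspaceestimate} are formulated only in terms of $\|f\|_{L_\infty}$, so upgrading them to bounds involving local $L_2$-averages (hence $\cM$) requires a careful reworking of each tail estimate with H\"older's inequality inserted at the right place while keeping the dependence on $b$ and $\kappa(b)$ compatible with the parabolic cylinders underlying $\cM$.
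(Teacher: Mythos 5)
Your part (i) matches the paper exactly: translation invariance of $\cG$ reduces everything to the cylinder $Q_b$, where Corollary \ref{outwholeestimate} applies (the paper additionally passes from $C_c^\infty$ to general $f\in L_2\cap L_\infty$ by approximation and Fatou, a step you should not omit). Your treatment of $p\in(1,2)$ by duality and of $p\neq q$ by the operator-valued Calder\'on--Zygmund theorem with the H\"ormander condition on $\cK(t,s)$, verified via $q_{\alpha,2+\alpha}$ and Corollary \ref{qintegrable}, is also the paper's argument (note that this route only yields $1<q\leq p$ directly; the paper handles $1<p<q$ by a second duality, which you should add).

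The genuine divergence is the diagonal case $p=q\in[2,\infty)$. You propose the pointwise estimate $(\cG f)^{\#}(t,x)\leq N\bigl(\cM|f|^{2}(t,x)\bigr)^{1/2}$, which forces you to rework Lemmas \ref{inwholeestimate}--\ref{outinspaceestimate} with local $L_2$-averages in place of $\|f\|_{L_\infty}$ --- and you correctly flag this as the main obstacle, but you do not resolve it. This reworking is not routine: in the tail estimates one must sum over dyadic cylinders $Q_{2^k b}$ whose measures grow like $\kappa(2^k b)(2^kb)^d$, and under Assumption \ref{ass bernstein} alone one only controls $\kappa$ through the one-sided scaling \eqref{phiratio}, so the required summability of kernel decay against cylinder growth needs a genuine verification. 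The paper sidesteps all of this: the map $f\mapsto(\cG f)^{\#}$ is sublinear, bounded on $L_2$ (by Lemma \ref{22estimate} together with the easy half of Theorem \ref{feffermanstein}), and bounded on $L_\infty$ --- this last bound being \emph{precisely} your estimate \eqref{bmoestimate}, since $\|(\cG f)^{\#}\|_{L_\infty}=\|\cG f\|_{BMO}$. Marcinkiewicz interpolation then gives $\|(\cG f)^{\#}\|_{L_p}\leq N\|f\|_{L_p}$ for all $p\in[2,\infty)$, and Fefferman--Stein converts this back to $\|\cG f\|_{L_p}$. No maximal function and no refinement of the oscillation lemmas is needed. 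I recommend you replace your sharp-function-versus-maximal-function plan with this interpolation argument; as written, the crucial step of your proposal is an unproved claim whose verification would be the hardest part of the whole proof.
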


\begin{proof}

We follow the proof of \cite[Theorem 4.4]{kim17timefractionalpde} which treats the case $\phi(\lambda)=\lambda$.

(i) Note that for any $(t_0,x_0)\in\bR^{d+1}$,
\begin{align*}
\cG f (t+t_0,x+x_0)&= \int_{-\infty}^{t+t_0}  \int_{\bR^d} q_{\alpha,1+\alpha}(t+t_0-s,x+x_0-y)f(s,y) dy ds
\\
&= \int_{-\infty}^{t}  \int_{\bR^d} q_{\alpha,1+\alpha}(t-s,x-y)f(s+t_0,x_0+y) dr dy ds
\\
&=\cG \left(f(\cdot+t_0,\cdot+x_0)\right)(t,x).
\end{align*}
Therefore, with $\tilde{f}(t,x):=f(t+t_0, x+x_0)$,
\begin{eqnarray*}
&&\aint_{Q_b(t_0,x_0)} |\cG f(t,x)-(\cG f)_{Q_b(t_0,x_0)}| dtdx\\
&&= \aint_{Q_b} |\cG \tilde{f}(t,x)-(\cG \tilde{f})_{Q_b}| dtdx.
\end{eqnarray*}
Due to this and the translation invariant property of   $L_{\infty}$-norm,   for the proof of   (i)  it suffices to prove  that  there exists $N=N(\alpha,d,c,\delta_{0})$ such that 
\begin{equation} \label{meanaverageineq}
\aint_{Q_b} |\cG f(t,x)-(\cG f)_{Q_b}| dtdx \leq N \|f\|_{L_\infty(\bR^{d+1})}, \quad b>0.
\end{equation}
Recall that we already have \eqref{meanaverageineq} due to Corollary \ref{outwholeestimate} if $f\in C_c^\infty (\bR^{d+1})$. 

Now we consider the general case, that is $f\in L_2(\bR^{d+1})\cap L_\infty(\bR^{d+1})$. We choose a sequence of functions $f_n\in C_c^\infty(\bR^{d+1})$ such that $\cG f_n \to \cG f \ (a.e.)$, and $\|f_n\|_{L_\infty(\bR^{d+1})}\leq \|f\|_{L_\infty(\bR^{d+1})}$. Then by Fatou's lemma,
\begin{align*}
&\aint_{Q_b} |\cG f(t,x)-(\cG f)_{Q_b}| dtdx
\\
& \leq \aint_{Q_b} \aint_{Q_b} |\cG f(t,x)-\cG f(s,y)| dtdxdsdy
\\
& \leq  \liminf_{n\to \infty} \aint_{Q_b} \aint_{Q_b} |\cG f_n(t,x)-\cG f_n(s,y)| dtdxdsdy
\\
& \leq N  \liminf_{n\to \infty} \|f_n\|_{L_\infty(\bR^{d+1})}
\leq N \|f\|_{L_\infty(\bR^{d+1})}.
\end{align*}

\textbf{Step 1.} We prove (\ref{qpestimate}) for the case $p=q$. First assume that $p\geq 2$. Then by Lemma \ref{22estimate} and Theorem \ref{feffermanstein}, for any $f\in L_2(\bR^{d+1})\cap L_\infty(\bR^{d+1})$, it holds that
\begin{equation*}
\|(\cG f)^{\#}\|_{L_2(\bR^{d+1})}\leq N \|f\|_{L_2(\bR^{d+1})}.
\end{equation*}
By (\ref{bmoestimate}),
\begin{equation*}
\|(\cG f)^{\#}\|_{L_\infty(\bR^{d+1})}\leq N \|f\|_{L_\infty(\bR^{d+1})}.
\end{equation*}
Note that the map $f \to (\cG f)^{\#}$ is sublinear since $\cG$ is linear. Hence by a version of the Marcinkiewicz interpolation theorem, for any $p\in [2,\infty)$ there exists a constant $N$ such that
\begin{equation*}
\|(\cG f)^{\#}\|_{L_p(\bR^{d+1})}\leq N \|f\|_{L_p(\bR^{d+1})}
\end{equation*}
for all $f\in L_2(\bR^{d+1})\cap L_\infty(\bR^{d+1})$. Finally by Theorem \ref{feffermanstein}, we get
\begin{equation*}
\|\cG f\|_{L_p(\bR^{d+1})}\leq N \|f\|_{L_p(\bR^{d+1})}.
\end{equation*}
Therefore (\ref{qpestimate}) is proved for $p\in[2,\infty)$.

Now let $p\in(1,2)$. Take $f,g\in C_c^\infty (\bR^{d+1})$ and $p'=\frac{p}{p-1}\in(2,\infty)$. By Parseval's identity, Fubini's theorem, and the change of variables,
\begin{align} \label{duality}
&\int_{\bR^{d+1}} g(t,x)\cG f(t,x) dx dt = \int_{\bR^{d+1}} g(t,x)\phi(\Delta) \cG_0f(t,x) dx dt \nonumber
\\
&=N(d) \int_{\bR^{d+1}} \cF_{d}(g)(t,\xi) \phi(|\xi|^2) \cF_{d}(\cG_0 f)(t,\xi) d\xi dt \nonumber
\\
&= \int_{\bR^{d+1}} \phi(\Delta)g(t,x) \int_{\bR^{d+1}} 1_{t-s>0}q_{\alpha,1}(t-s,x-y) f(s,y) dy ds dx dt \nonumber
\\
&= \int_{\bR^{d+1}} \int_{\bR^{d+1}} \phi(\Delta)g(-t,-x)  1_{s-t>0}q_{\alpha,1}(s-t,y-x) f(-s,-y) dx dt dy ds \nonumber
\\
&= \int_{\bR^{d+1}} \cG \tilde{g}(s,y) f(-s,-y)dyds,
\end{align}
where  $\tilde{g}(t,x)=g(-t,-x)$. By H\"older's inequality,
\begin{align*}
&\left|\int_{\bR^{d+1}} g(t,x)\cG f(t,x) dx dt\right| 
\\
&\leq \|f\|_{L_p(\bR^{d+1})}\|\cG \tilde{g}\|_{L_{p'}(\bR^{d+1})} \leq \|f\|_{L_p(\bR^{d+1})}\|g\|_{L_{p'}(\bR^{d+1})}.
\end{align*}
Since $g\in C_c^\infty (\bR^{d+1})$ is arbitrary,  we have $\cG f\in L_{p}(\R^{d+1})$ and (\ref{qpestimate}) is also proved for $p\in(1,2)$.

\textbf{Step 2.} Now we prove (\ref{qpestimate}) for general $p,q\in(1,\infty)$.  Define $q_{\alpha,\beta}(t):=0$ for $t\leq 0$. For each $(t,s)\in\bR^2$, we define the operator $\cK(t,s)$ as follows:
\begin{equation*}
\cK(t,s)f(x):=\int_{\bR^d} q_{\alpha,1+\alpha}(t-s,x-y)f(y) dy, \quad f\in C_c^\infty(\R^{d}).
\end{equation*}
Let $p\in(1,\infty)$. Then,
\begin{align*}
\|\cK(t,s)f\|_{L_p(\R^{d})}&=\left\|\int_{\bR^d} q_{\alpha,1+\alpha}(t-s,x-y)f(y) dy\right\|_{L_p(\R^{d})}
\\
&\leq \|f\|_{L_p(\R^{d})} \int_{\bR^d} |q_{\alpha,1+\alpha}(t-s,y)|dy \leq N(t-s)^{-1}\|f\|_{L_p(\R^{d})}.
\end{align*}
Hence the operator $\cK(t,s)$ is uniquenly extendible to $L_p(\R^{d})$ for $t\neq s$. Denote
\begin{equation*}
Q:=[t_0,t_0+\delta), \quad Q^*:=[t_0-\delta,t_0+2\delta), \quad  \delta>0.
\end{equation*}
Note that for $t\notin Q^*$ and $s_1,s_2\in Q$, we have
$$
|s_1-s_2|\leq\delta, \quad |t-(t_0+\delta)|\geq\delta.
$$
Thus, for any $f\in L_p(\R^{d})$ such that $\|f\|_{L_p(\R^{d})}=1$,
\begin{align*}
&\|\cK(t,s_1)f-\cK(t,s_2)f\|_{L_p(\R^{d})}
\\
&=\left\| \int_{\bR^d}  \left(q_{\alpha,1+\alpha}(t-s_1,x-y)-q_{\alpha,1+\alpha}(t-s_2,x-y)\right)f(y) dy\right\|_{L_p(\R^{d})}
\\
&\leq \|f\|_{L_p(\R^{d})} \int_{\bR^d} \left|q_{\alpha,1+\alpha}(t-s_1,x-y)-q_{\alpha,1+\alpha}(t-s_2,x-y)\right| dy dx
\\
&\leq N \int_{\bR^d} \int_0^1 |q_{\alpha,2+\alpha}(t-us_1+(1-u)s_2,x)| |s_1-s_2| du dx
\\
&\leq \frac{N|s_1-s_2|}{(t-(t_0+\delta))^2}.
\end{align*}
due to Corollary \ref{qintegrable} (i). Here, recall that $\cK(t,s)=0$ if $t\leq s$. Hence,
\begin{align*}
\|\cK(t,s_1)-\cK(t,s_2)\|_{\Lambda} \leq \frac{N|s_1-s_2|}{(t-(t_0+\delta))^2}.
\end{align*}
where $\|\cdot\|_{\Lambda}$ denotes the operator norm of $\Lambda$ on $L_p(\R^{d})$. Therefore,
\begin{align*}
&\int_{\bR\setminus Q^*} \|\cK(t,s_1)-\cK(t,s_2)\|_{\Lambda} dt \leq N \int_{\bR\setminus Q^*}\frac{|s_1-s_2|}{(t-(t_0+\delta))^2} dt
\\
&\leq N|s_1-s_2|\int_{|t-(t_0+\delta)|\geq \delta}\frac{1}{(t-(t_0+\delta))^2} dt \leq N\delta \int_\delta^\infty t^{-2}dt \leq N.
\end{align*}
Furthermore, by following the proof of \cite[Theorem 1.1]{krylov2001caideron}, one can easily check that for almost every $t$ outside of the support of $f\in C_c^\infty(\bR;L_p(\R^{d}))$,
\begin{equation*}
\cG f(t,x)=\int_{-\infty}^\infty \cK(t,s)f(s,x)ds
\end{equation*}
where $\cG$ denotes the  extension  to  $L_p(\bR^{d+1})$ which is verified in Step 1. Hence by the Banach space-valued version of the Calder\'on-Zygmund theorem (e.g. \cite[Theorem 4.1]{krylov2001caideron}), our assertion is proved for $1<q\leq p$.

For $1<p<q<\infty$, define $p'=\frac{p}{p-1}$ and $q'=\frac{q}{q-1}$. By (\ref{duality}) and H\"older's inequality,
\begin{align*} 
\left|\int_{\bR^{d+1}} g(t,x)\cG f(t,x) dx dt\right| &=\left| \int_\bR \left(\int_{\bR^{d}} \cG \tilde{g}(s,y) f(-s,-y)dy\right)ds\right|
\\
&\leq  \int_{\bR} \|f(-s,\cdot)\|_{L_p(\R^{d})}\|\cG \tilde{g}(s,\cdot)\|_{L_{p'}(\R^{d})} ds 
\\
&\leq N \|f\|_{L_q(\bR;L_p(\R^{d}))} \|g\|_{L_{q'}(\bR;L_{p'}(\R^{d}))}
\end{align*}
for any $g\in C_c^\infty(\bR^{d+1})$, where the last inequality holds due to $1<q'<p'$. Since $g$ is arbitrary, we have
\begin{equation*}
\|\cG f\|_{L_q(\bR;L_p(\R^{d}))}\leq N \|f\|_{L_q(\bR;L_p(\R^{d}))}.
\end{equation*}
The theorem is proved.
\end{proof}

\mysection{Homogeneous equation}

In this section we study the  homogeneous equation with non-zero initial condition 
\begin{equation}\label{eqn 06.15.16:05}
\partial^{\alpha}_{t}u=\phi(\Delta)u,\quad u(0,x)=u_{0}(x).
\end{equation}

We first show that $q$ is a fundamental solution to equation \eqref{eqn 06.15.16:05}.

\begin{lemma} \label{zero converge}
Let $u_0\in C_{p}^{\infty}(\bR^d)$, and define $u$ as
$$
u(t,x):=\int_{\bR^d} q(t,x-y)u_0(y)dy.
$$

(i) As as $t \downarrow 0$, $u(t,\cdot)$ converges to $u_0(\cdot)$ uniformly on $\bR^d$ and also  in $H^n_p$ for any $n\in \bN_0$.

(ii) $u\in C^{\alpha,\infty}_{p}([0,T]\times\bR^d)$ and $u$ satisfies $\partial^{\alpha}_{t}u=\phi(\Delta)u$ for $t>0$. 

\end{lemma}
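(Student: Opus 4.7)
The starting point is that $q(t,\cdot)$ is the transition density of the time-changed process $Y_t := X_{R_t}$, and in particular a probability density on $\bR^d$ for each $t>0$ (Lemma \ref{lem 7.21.1}(iv) with $\beta=\alpha$ gives $\hat q(t,0)=E_\alpha(0)=1$). Hence $u(t,x)=\bE[u_0(x-Y_t)]$ and, by dominated convergence, $D^\sigma u(t,x)=\bE[D^\sigma u_0(x-Y_t)]$ for every multi-index $\sigma$.

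For (i), note that $R_t=\inf\{s>0:Q_s>t\}\downarrow 0$ a.s.\ as $t\downarrow 0$, and hence $Y_t\to 0$ a.s.\ by right-continuity of $X$ at $0$. Since $u_0\in C_p^\infty(\bR^d)$ is bounded and uniformly continuous (via the Sobolev embedding $H_p^m\hookrightarrow C_b$ for large $m$), dominated convergence yields the uniform convergence $D^\sigma u(t,\cdot)\to D^\sigma u_0$ on $\bR^d$. For the $H_p^n$-convergence, Minkowski's integral inequality gives
$$
\|D^\sigma u(t,\cdot)-D^\sigma u_0\|_{L_p}\;\leq\;\bE\,\|D^\sigma u_0(\cdot-Y_t)-D^\sigma u_0(\cdot)\|_{L_p},
$$
whose right-hand side tends to $0$ by the strong $L_p$-continuity of translations and dominated convergence.

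For (ii), the Fourier-side representation $\hat u(t,\xi)=E_\alpha(-t^\alpha\phi(|\xi|^2))\hat u_0(\xi)$ combined with the classical identity
$$
\partial_t^\alpha\bigl[E_\alpha(-t^\alpha z)\bigr]=-z\,E_\alpha(-t^\alpha z),\qquad z\geq 0,
$$
(verified by taking the Laplace transform of $f(t):=E_\alpha(-t^\alpha z)-1$, which satisfies $f(0)=0$ and so obeys $\cL\{D_t^\alpha f\}(s)=s^\alpha\cL f(s)$) yields $\widehat{\partial_t^\alpha u}(t,\xi)=-\phi(|\xi|^2)\hat u(t,\xi)=\widehat{\phi(\Delta)u}(t,\xi)$, hence $\partial_t^\alpha u=\phi(\Delta)u$. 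Spatial regularity $D^\sigma u(t,\cdot)\in L_p$ with $t$-continuity on $[0,T]$ follows from Part (i) at $t=0$ and the stochastic continuity $Y_s\to Y_t$ in probability at $t>0$, combined with the Minkowski argument above. The $t$-continuity of $\partial_t^\alpha D^\sigma u=D^\sigma\phi(\Delta)u$ is obtained identically after writing $\phi(\Delta)u(t,\cdot)=q(t,\cdot)\ast\phi(\Delta)u_0$ and using $D^\sigma\phi(\Delta)u_0\in L_p$ (from the integro-differential representation applied to $u_0\in C^\infty$ whose derivatives all lie in $L_p\cap L_\infty$).

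The main obstacle is the rigorous passage from the pointwise-in-$\xi$ Caputo identity to an equality of $L_p(\bR^d)$-valued functions of $t$. This is handled by first showing $t\mapsto u(t,\cdot)-u_0\in C([0,T];L_p)\cap C^1((0,T];L_p)$: differentiating under the integral together with Lemma \ref{lem 7.21.1}(iii) and the derivative estimates of Section~3 give $\partial_t u(t,\cdot)=q_{\alpha,\alpha+1}(t,\cdot)\ast u_0\in L_p$, so that the definition \eqref{eqn 8.30} of $\partial_t^\alpha$ applied to $u-u_0$ makes sense as a Bochner integral and, by Fourier inversion, coincides with $\phi(\Delta)u$.
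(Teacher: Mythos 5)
Your proposal is correct, but part (i) takes a genuinely different, probabilistic route from the paper. The paper argues analytically: using $\int_{\bR^d}q(t,y)\,dy=\hat q(t,0)=1$, it splits the convolution into $|y|\le\delta$ and $|y|>\delta$, controls the near part by uniform continuity of $u_0$, and kills the far part with the explicit tail bound $\int_{|y|\ge\delta}q(t,y)\,dy\le Nt^{\alpha}\phi(\delta^{-2})$ obtained from Lemmas \ref{gammaversionj} and \ref{qestimatebeta123}; the $H^n_p$ statement follows from the same splitting together with the $L_p$-continuity of translations. You instead use that $q(t,\cdot)$ is the law of $Y_t=X_{R_t}$, that $R_t\downarrow0$ a.s.\ (since the stable subordinator is strictly increasing, $\inf\{s>0:Q_s>0\}=0$), and then dominated convergence resp.\ Minkowski's inequality. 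Your argument is shorter and bypasses the kernel estimates of Section 3, at the price of invoking positivity of $q$ and an almost-sure path argument; the paper's version is purely analytic and comes with the quantitative rate $t^{\alpha}\phi(\delta^{-2})$. For part (ii) the two proofs are essentially the same Fourier-side computation: the paper combines the recurrence \eqref{Mittag recurr} with \eqref{fourier of q} to get $\cF(\partial_t^\alpha u)=-\phi(|\xi|^2)\hat q\,\hat u_0$, which is precisely your eigenfunction identity $\partial_t^\alpha E_\alpha(-t^\alpha z)=-zE_\alpha(-t^\alpha z)$. One detail worth tightening in your last paragraph: the bound $\|\partial_t u(t,\cdot)\|_{L_p}\le\|q_{\alpha,\alpha+1}(t,\cdot)\|_{L_1}\|u_0\|_{L_p}\le Nt^{-1}\|u_0\|_{L_p}$ coming from Corollary \ref{qintegrable} is not integrable at $t=0$, so it does not by itself justify the absolute convergence you want; writing instead $q_{\alpha,\alpha+1}\ast u_0=q_{\alpha,1}\ast\phi(\Delta)u_0$ (on the Fourier side $\hat q_{\alpha,\alpha+1}=-\phi(|\xi|^2)\hat q_{\alpha,1}$) gives the integrable bound $Nt^{\alpha-1}\|\phi(\Delta)u_0\|_{L_p}$. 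This is a fixable detail rather than a gap — the paper is equally terse at this step, simply invoking \eqref{fourier of q} and \eqref{eqn 8.30}.
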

\begin{proof}

(i) By \eqref{Laplace trans},  for any $t>0$,
$$
\int_{\bR^d} q(t,y)dy =\hat{q}(t,0) =E_{\alpha}(0)=1.
$$
For any $\delta >0$,
\begin{align*}
&\left|\int_{\bR^d} q(t,x-y)u_0(y)dy -u_0(x) \right|\\
&=\left|\int_{\bR^d} q(t,y)(u_0(x-y)-u_0(x))dy \right| \\
&\leq \int_{|y| \leq \delta} \left|q(t,y)(u_0(x-y)-u_0(x))\right|dy +\int_{|y| > \delta} \left|q(t,y)(u_0(x-y)-u_0(x))\right|dy  \\
&=: I_{1}(\delta,x) + I_{2}(\delta,x).
\end{align*}
Since $u_0 \in C_{p}^{\infty}(\bR^{d})$,  for any $\varepsilon >0$,   one can take a small $\delta$ so that $I_{1}(\delta,x) < \varepsilon$.
Moreover due to \eqref{int phi} and Lemma \ref{qestimatebeta123},  we have 
\begin{equation}
\label{eqn 8.4.1}
\int_{|y|\geq \delta} q(t,y)dy \leq N t^{\alpha}\phi(\delta^{-2}),
\end{equation}
and therefore
\begin{align*}
I_{2}(\delta,x)\leq N  t^{\alpha} \phi(\delta^{-2})\|u_0\|_{L_\infty} \to 0 \quad \text{as} \quad t \downarrow 0. 
\end{align*}
This shows $u(t,x)\to u_0(x)$ as $t\to 0$ uniformly in $x$. Also,
 \begin{eqnarray*}
 \|u(t)-u_0\|^p_{L_p} &\leq& 2^p \|I_1(\delta)\|^p_{L_p}+2^p  \|I_2(\delta)\|^p_{L_p} \\
 &\leq&N \sup_{|z|\leq \delta}\|u_{0}(\cdot-z)-u_{0}(\cdot)\|^p_{L_{p}}+ N\|u_{0}\|^p_{L_{p}}\int_{|y| > \delta}|q(t,y)| dy.
 \end{eqnarray*}
 This and \eqref{eqn 8.4.1} certainly proves the $L_p$ convergence. Considering  $D^n_x$, we have $u(t,\cdot)\to u_0$ in $H^n_p$ for any $n$.

(ii) By \eqref{fourier of q} (recall \eqref{eqn 8.30}),
$$
\partial_t^\alpha u(t,x)=\int_{\bR^d} q_{\alpha,2\alpha}(t,y)u_0(x-y)dy - \frac{1}{\Gamma(1-\alpha)}t^{-\alpha}u_0(x).
$$
By \eqref{Mittag recurr} and \eqref{fourier of q}, we have
\begin{align*}
\cF\left({\partial^{\alpha}_t u(t,\cdot)}\right)(\xi) &= \hat{q}_{\alpha,2\alpha}(t,\xi)\hat{u}_{0}(\xi)-\frac{1}{\Gamma(1-\alpha)}t^{-\alpha}\hat{u}_{0}(\xi)
\\
&=\phi(|\xi|^{2})\left( R^{-1} E_{\alpha,1-\alpha}(-R)-R^{-1}\frac{1}{\Gamma(1-\alpha)} \right)\hat{u}_{0}(\xi)
\\
&=-\phi(|\xi|^{2})\left( E_{\alpha,1}(-R) \right) \hat{u}_{0}(\xi)
\\
&=-\phi(|\xi|^2)\hat{q}_{\alpha,\alpha}(t,\xi)\hat{u}_{0}(\xi) 
\\
&=\cF\left({\phi(\Delta)u}(t,\cdot)\right)(\xi),
\end{align*}
where $R=R(t,\xi)=t^{\alpha}\phi(|\xi|^{2})$. This together with (i) implies $u$ satisfies \eqref{eqn 06.15.16:05}. 

Now we show that $u\in C^{\alpha,\infty}_{p}([0,T]\times\R^{d})$. Since
$$
D^m_x u(t,x):=\int_{\bR^d} q(t,x-y)D^m_x u_{0}(y)dy,
$$
we have $D^{m}_{x}u\in C([0,T];L_{p})$. Also since $u$ satisfies \eqref{eqn 06.15.16:05} one can easily have $\partial^{\alpha}_{t}u\in C([0,T];L_{p})$. The lemma is proved.
\end{proof}

Recall that   $\Psi\in\cS(\R^{d})$ is chosen  such that  $\hat{\Psi}(\xi)$ is supported in a strip $\{1/2\leq |\xi| \leq 2\}$, $\hat{\Psi} \geq 0$, and satisfies \eqref{eqn 06.23.10:19}. Also recall that we set a sequence of functions $\Psi_{j}$ satisfying \eqref{eqn 06.23.10:17}.

Define
\begin{equation}\label{def 06.15.09:36}
\begin{aligned}
q_{j}(t,x)&=(\Psi_{j}(\cdot)\ast q(t,\cdot))(x)
=\cF^{-1}(\hat{\Psi}(2^{-j}\cdot)\hat{q}(t,\cdot))(x)
\\
&=2^{jd}\cF^{-1}(\hat{\Psi}(\cdot)\hat{q}(t,2^{j}\cdot))(2^{j}x)
:=2^{jd}\bar{q}_{j}(t,2^{j}x).
\end{aligned}
\end{equation}

\begin{lemma}\label{lem 06.15.09:35}
Let $p>1$, and $0<\alpha<1$. Then there exists a constant $N$ depending only on $\alpha,d,$ and $\phi$ such that
\begin{equation}\label{bound_of_p_j}
\|q_{j}(t,\cdot)\|_{L_{1}}\leq N ( \phi(2^{2j})^{-\frac{1}{\alpha}}  t^{-1}\wedge 1), \quad t>0.
\end{equation}
\end{lemma}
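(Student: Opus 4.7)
The plan is to establish two separate bounds and take their minimum. For the uniform bound $\|q_j(t,\cdot)\|_{L_1}\le N$, I would observe that the subordination representation \eqref{eqn 8.1.1} exhibits $q(t,\cdot)$ as the transition density of $Y_t=X_{R_t}$, so $q(t,\cdot)\ge0$ with $\|q(t,\cdot)\|_{L_1}=1$. Young's convolution inequality then yields
$$\|q_j(t,\cdot)\|_{L_1}=\|\Psi_j*q(t,\cdot)\|_{L_1}\le\|\Psi_j\|_{L_1}\|q(t,\cdot)\|_{L_1}\le N,$$
uniformly in $j\in\bN_0$ and $t>0$, since $\|\Psi_j\|_{L_1}=\|\Psi\|_{L_1}$ for $j\ge 1$ (from $\Psi_j(x)=2^{jd}\Psi(2^jx)$) and $\Psi_0\in\cS$ separately.

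For the decaying bound, I would begin from the subordination formula
$$q_j(t,x)=\int_0^\infty p_j(r,x)\varphi(t,r)\,dr,\qquad p_j(r,x):=(\Psi_j*p(r,\cdot))(x),$$
and first estimate $\|p_j(r,\cdot)\|_{L_1}$ by Fourier methods. From the scaling in \eqref{def 06.15.09:36}, $\|p_j(r,\cdot)\|_{L_1}=\|\bar p_j(r,\cdot)\|_{L_1}$ with $\bar p_j(r,y)=\cF^{-1}(\hat\Psi(\eta)e^{-r\phi(2^{2j}|\eta|^2)})(y)$. On $\mathrm{supp}(\hat\Psi)\subset\{1/2\le|\eta|\le 2\}$, inequality \eqref{phiratio} delivers $\phi(2^{2j}|\eta|^2)\sim\phi(2^{2j})$, while \eqref{eqn 07.19.14.35} bounds the $\eta$-derivatives of $\phi(2^{2j}|\eta|^2)$ by a constant multiple of $\phi(2^{2j})$. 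A Fa\`a di Bruno bookkeeping then yields, for every multi-index $\sigma$,
$$\bigl|D_\eta^\sigma\bigl[\hat\Psi(\eta)e^{-r\phi(2^{2j}|\eta|^2)}\bigr]\bigr|\le N_\sigma\bigl(1+(r\phi(2^{2j}))^{|\sigma|}\bigr)e^{-c r\phi(2^{2j})}$$
on the support, with $c>0$ independent of $j$ and $r$. Combining $(1+|y|^{2k})|\bar p_j(r,y)|\le N\|(1-\Delta_\eta)^k[\hat\Psi(\eta)e^{-r\phi(2^{2j}|\eta|^2)}]\|_{L_1}$ for some $2k>d$ with the absorption of the polynomial prefactor into the exponential gives $\|p_j(r,\cdot)\|_{L_1}\le Ne^{-c'r\phi(2^{2j})}$ for some $c'>0$.

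Integrating in $r$ against $\varphi(t,r)$ and invoking the Laplace transform identity of Lemma \ref{lem 7.21.1}(iv) with $\beta=\alpha$,
$$\|q_j(t,\cdot)\|_{L_1}\le N\int_0^\infty e^{-c'r\phi(2^{2j})}\varphi(t,r)\,dr=NE_\alpha(-c't^\alpha\phi(2^{2j})),$$
and the standard decay $E_\alpha(-x)\le C/(1+x)$ for $x\ge 0$ produces $\|q_j(t,\cdot)\|_{L_1}\le N/(1+c't^\alpha\phi(2^{2j}))$. Combined with the uniform bound this gives an estimate of the natural form $\|q_j(t,\cdot)\|_{L_1}\le N\bigl((t^\alpha\phi(2^{2j}))^{-1}\wedge 1\bigr)$.

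The main obstacle I anticipate is matching this to the exact form $\phi(2^{2j})^{-1/\alpha}t^{-1}\wedge 1$ stated in the lemma: my direct subordination/Fourier approach gives decay of order $(t^\alpha\phi(2^{2j}))^{-1}$, and these two forms coincide only at the crossover $t\sim\phi(2^{2j})^{-1/\alpha}$. Matching the claimed exponents cleanly is likely to require either a sharper expansion of the Mittag-Leffler function beyond its leading asymptotic, a finer exploitation of the vanishing moments of $\Psi_j$ for $j\ge 1$ (note that $\hat\Psi_j(0)=0$, so $\int q_j(t,x)\,dx=0$), or an iterated integration by parts in $t$ using $\partial_t q_j=\Psi_j*q_{\alpha,1+\alpha}$ together with the $j$-dependent derivative estimates of Lemma \ref{qestimatebeta123} to upgrade $t^{-\alpha}$ to the sharper $t^{-1}$ rate. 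This exponent-matching step is where the argument needs the most care.
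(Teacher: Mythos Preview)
Your uniform bound via Young's inequality coincides with the paper's first step. For the decay, the paper takes a somewhat different route: instead of subordinating and invoking the Laplace transform of $\varphi(t,\cdot)$, it writes $\cF(\bar q_j)(t,\xi)=\hat\Psi(\xi)E_\alpha(-t^\alpha\phi(2^{2j}|\xi|^2))$, inserts the integral representation
\[
E_\alpha(-v)=\frac{\sin\alpha\pi}{\pi}\int_0^\infty\frac{r^{\alpha-1}}{r^{2\alpha}+2r^\alpha\cos(\alpha\pi)+1}\,e^{-rv^{1/\alpha}}\,dr,
\]
bounds all $\xi$-derivatives of this expression uniformly on $\{1/2\le|\xi|\le 2\}$, and recovers the $L_1$ norm by the $(1+|x|^{2d})^{-1}$ trick. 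Both routes ultimately reduce to controlling $E_\alpha(-t^\alpha\phi(2^{2j}))$ and its avatars, so they are close in spirit; your subordination approach is arguably a bit more streamlined.

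Your concern about exponent matching is not a gap in your argument but an error in the stated lemma. Since $\|q_j(t,\cdot)\|_{L_1}\ge|\hat q_j(t,2^j\xi_0)|=\hat\Psi(\xi_0)\,E_\alpha\bigl(-t^\alpha\phi(2^{2j}|\xi_0|^2)\bigr)$ for any $\xi_0$ with $\hat\Psi(\xi_0)>0$, and $E_\alpha(-x)\sim\tfrac{1}{\Gamma(1-\alpha)}x^{-1}$, the $L_1$ norm is bounded \emph{below} by a multiple of $(t^\alpha\phi(2^{2j}))^{-1}$, which strictly exceeds $\phi(2^{2j})^{-1/\alpha}t^{-1}$ whenever $t>\phi(2^{2j})^{-1/\alpha}$ and $\alpha<1$. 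In the paper's proof an extra factor of $r$ appears in the integral representation; with that factor one can indeed bound $\int_0^1 r^{\alpha}e^{-rA}\,dr\le N A^{-1}$ for $A=t\phi(2^{2j})^{1/\alpha}$, but with the correct integrand $r^{\alpha-1}$ near $r=0$ the best decay is $A^{-\alpha}$. So the sharp estimate is exactly your
\[
\|q_j(t,\cdot)\|_{L_1}\le N\bigl((t^\alpha\phi(2^{2j}))^{-1}\wedge 1\bigr),
\]
and none of the proposed refinements (sharper Mittag--Leffler asymptotics, vanishing moments of $\Psi_j$, or time-differentiation) can upgrade $t^{-\alpha}$ to $t^{-1}$. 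The subsequent proof of Theorem~\ref{Besov_bound_2} still runs with this corrected bound once the H\"older exponent there is chosen in $(-\alpha,-1/q)$, which is nonempty precisely when $\alpha q>1$.
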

\begin{proof}

By Corollary \ref{qintegrable} (i) we have $\|q(t,\cdot)\|_{L_{1}}\leq N$.  Due to this and the relation $\|q_j(t,\cdot)\|_{L_1}= \|\bar{q}_j(t,\cdot)\|_{L_1}$,  it only  remains to show
$$
\|\bar{q}_{j}(t,\cdot)\|_{L_{1}}\leq N   \phi(2^{2j})^{-\frac{1}{\alpha}} t^{-1}.
$$
By definition (see \eqref{def 06.15.09:36})
\begin{equation}
     \label{eqn 5.16.7}
\cF(\bar{q}_j)(t,\xi)=\hat{\Psi}(\xi)\hat{q}(t,2^{j}\xi).
\end{equation}
Using the relation $\hat{q}(t,\xi)=E_{\alpha}(-\phi(|\xi|^{2})t^{\alpha})$ (see \eqref{fourier of q}) and  the following integral representation of $E_{\alpha}$(e.g. \cite[Exercise 3.9.5]{gorenflo2014mittag})
\begin{equation*}\label{representation_of_E_a_1}
E_{\alpha}(-v)=\frac{\sin{\alpha \pi}}{\pi}\int_{0}^{\infty}\frac{r^{\alpha-1}}{r^{2\alpha}+2r^{\alpha} \cos{(\alpha\pi)}+1}\exp{(-rv^{1/\alpha})}rdr, \quad \forall v>0
\end{equation*}
 we have
\begin{eqnarray}
\nonumber
|\cF{\bar{p}_{j}}(t,\xi)|&\leq& N 1_{1/2 \leq |\xi|\leq 2}\int_{0}^{1}r^{\alpha-1}\exp{(-\phi(2^{2j}|\xi|^{2})^{\frac{1}{\alpha}}tr)}r\,dr
\\
&&+N 1_{1/2 \leq |\xi|\leq 2} \int_{1}^{\infty} r^{-\alpha-1} \exp{(-\phi(2^{2j}|\xi|^{2})^{\frac{1}{\alpha}}tr)}r\,dr.   \label{eqn 5.16.8}
\end{eqnarray}
Note that for any polynomial $Q(z)$ of degree $m$ and constant $\sigma>0$, we have
$$
Q(z) e^{-\sigma z}\leq N(\sigma,m)z^{-1}.
$$

\noindent
This together with \eqref{eqn 5.16.8}, and \eqref{phiratio} easily yields
\begin{equation*}
|\cF\bar{q}_{j}(t,\xi)|\leq N \phi(2^{2j})^{-\frac{1}{\alpha}}t^{-1}1_{1/2 \leq |\xi|\leq 2}.
\end{equation*}
Similarly, using \eqref{eqn 5.16.7}  and following above computations,  for any multi-index $\gamma$ we get
\begin{equation*}
|D_{\xi}^{\gamma}\cF\bar{q}_{j}(t,\xi)|\leq N(\alpha,\gamma,d,\phi)\phi(2^{2j})^{-\frac{1}{\alpha}}t^{-1}1_{1/2 \leq |\xi|\leq 2}.
\end{equation*}
Therefore, we finally have
\begin{equation*}
\begin{aligned}
\|\bar{q}_{j}(t,\cdot)\|_{L_{1}}&=\int_{\R^{d}}(1+|x|^{2d})^{-1}(1+|x|^{2d})|\bar{q}_{j}(t,x)|dx
\\
&\leq N \int_{\R^{d}}(1+|x|^{2d})^{-1}\sup_{\xi}|(1+\Delta_{\xi}^{d})\cF(\bar{q}_{j})(t,\xi)|dx
\\
&\leq N  \phi(2^{2j})^{-\frac{1}{\alpha}}t^{-1}.
\end{aligned}
\end{equation*}
The lemma is proved.
\end{proof}

\begin{theorem}\label{Besov_bound_2}
Let $p>1$,  $0<\alpha<1$ and $f\in\Ccinf(\R^{d})$. Then we have
\begin{equation}\label{Besov_bound_of_q}
\int_{0}^{T}\|q\ast f(t,\cdot)\|^{q}_{L_{p}} dt \leq N \|f\|^{q}_{B^{\phi,-\frac{2}{\alpha q}}_{p,q}},
\end{equation}
where the constant $N$ depends only on $\alpha,d,\phi,p,q,T$.
\end{theorem}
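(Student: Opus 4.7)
The plan is to decompose $f$ via Littlewood--Paley pieces, estimate each piece $q*f_j$ in $L_q(0,T;L_p)$ using the kernel bound of Lemma \ref{lem 06.15.09:35}, and then sum over $j$ using a weighted discrete Hardy inequality. The natural time-scale for the $j$-th frequency piece is $T_j:=\phi(2^{2j})^{-1/\alpha}$; Assumption \ref{ass bernstein} together with the concavity of $\phi$ ensures $\rho\le T_j/T_{j+1}\le 4^{1/\alpha}$ with some $\rho>1$, so $(T_j)$ decays geometrically, which is what makes the combinatorial step go through.

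For each $j\ge 1$, I will introduce the widened Littlewood--Paley cutoff $\zeta_j:=\Psi_{j-1}+\Psi_j+\Psi_{j+1}$, whose Fourier transform equals one on the support of $\hat\Psi_j$. Then $f_j=\zeta_j*f_j$ and $q*f_j=(q_{j-1}+q_j+q_{j+1})*f_j$. Young's inequality, combined with Lemma \ref{lem 06.15.09:35} and $\phi(2^{2(j\pm 1)})\sim\phi(2^{2j})$, yields
\begin{equation*}
\|q*f_j(t,\cdot)\|_{L_p}\le N\,h_j(t)\,\|f_j\|_{L_p},\qquad h_j(t):=1\wedge T_j/t,
\end{equation*}
uniformly in $j\ge 1$. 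For $j=0$, Corollary \ref{qintegrable} (i) (with $\beta=\alpha$) gives $\|q(t,\cdot)\|_{L_1}\le N$, so $\|q*f_0(t,\cdot)\|_{L_p}\le N\|f_0\|_{L_p}$; its $L_q(0,T)$ contribution is controlled by $N T^{1/q}\|f_0\|_{L_p}$ and is absorbed into the $\|f_0\|_{L_p}^q$ term of the Besov norm.

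Setting $a_j:=\|f_j\|_{L_p}$, Minkowski's inequality gives $\|q*f(t,\cdot)\|_{L_p}\le N\|f_0\|_{L_p}+NF(t)$ with $F(t):=\sum_{j\ge 1}a_j h_j(t)$. To estimate $\int_0^T F(t)^q\,dt$, I will partition $(0,T]$ into the dyadic intervals $I_j:=(T_{j+1},T_j]\cap (0,T]$. On $I_j$, $h_k(t)=1$ for $k\le j$ and $h_k(t)=T_k/t$ for $k>j$, so $F(t)=S_{1,j}+S_{2,j}(t)$ with $S_{1,j}:=\sum_{k\le j}a_k$ and $S_{2,j}(t):=t^{-1}\sum_{k>j}a_k T_k$. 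Using $|I_j|\le T_j$ and $\int_{I_j}t^{-q}\,dt\le N T_j^{1-q}$ (which uses $T_{j+1}\sim T_j$), I obtain
\begin{equation*}
\int_{I_j}F(t)^q\,dt\le N T_j S_{1,j}^q+N T_j^{1-q}\Bigl(\sum_{k>j}a_k T_k\Bigr)^q.
\end{equation*}

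The last step, which I expect to be the most delicate, is to sum over $j$ and collapse everything to $N\sum_j T_j a_j^q$. This reduces to two weighted discrete Hardy inequalities:
\begin{equation*}
\sum_j T_j S_{1,j}^q\le N\sum_j T_j a_j^q,\qquad \sum_j T_j^{1-q}\Bigl(\sum_{k>j}a_k T_k\Bigr)^q\le N\sum_j T_j a_j^q.
\end{equation*}
Both follow from a Muckenhoupt-type condition verified by a direct geometric-series computation using $T_j/T_{j+1}\ge\rho>1$; concretely, applying H\"older with the splitting $a_k=(a_k T_k^{\lambda/q})\cdot T_k^{-\lambda/q}$ for a suitable $\lambda\in(0,1)$ reduces both sides to convergent geometric sums in the $T_k^{\pm 1}$ scale. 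Combining everything gives $\int_0^T\|q*f(t,\cdot)\|_{L_p}^q\,dt\le N\sum_{j\ge 0}T_j\|f_j\|_{L_p}^q=N\|f\|_{B^{\phi,-2/\alpha q}_{p,q}}^q$, which is the desired bound.
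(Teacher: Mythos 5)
Your proposal is correct and follows essentially the same route as the paper: a Littlewood--Paley decomposition, the kernel bound of Lemma \ref{lem 06.15.09:35} giving $\|q*f_j(t,\cdot)\|_{L_p}\le N(1\wedge \phi(2^{2j})^{-1/\alpha}t^{-1})\|f_j\|_{L_p}$, and a weighted H\"older/Hardy summation exploiting the geometric decay of $T_j=\phi(2^{2j})^{-1/\alpha}$ coming from \eqref{phiratio}; your partition of $(0,T]$ into the intervals $I_j$ followed by two discrete Hardy inequalities is just a repackaging of the paper's pointwise H\"older in $j$ followed by Fubini in $t$. One small correction: a single exponent $\lambda\in(0,1)$ does not serve both Hardy inequalities --- with the splitting you describe, the first (the $\sum_{k\le j}a_k$ term) indeed needs $\lambda\in(0,1)$, but the second (the tail term $\sum_{k>j}a_kT_k$) requires $\lambda\in(1,q)$ so that $\sum_{j<k}T_j^{1-\lambda}$ is dominated by $NT_k^{1-\lambda}$; this mirrors the paper's two distinct parameter ranges $a\in(-1/q,0)$ and $b\in(-1,-1/q)$.
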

\begin{proof}
Note that 
\begin{equation*}\label{middle_of_Besov_bound_6}
\begin{aligned}
\hat{\Psi}_{j}&=\hat{\Psi}_{j}(\hat{\Psi}_{j-1}+\hat{\Psi}_{j}+\hat{\Psi}_{j+1}), \quad j\in\bN,
\\
\hat{\Psi}_{0}&=\hat{\Psi}_{0}(\hat{\Psi}_{0}+\hat{\Psi}_{1}).
\end{aligned}
\end{equation*} 
Using this and the relation $\cF(f_{1}\ast f_{2})=\cF(f_{1})\cF(f_{2})$,
\begin{equation*}
\begin{aligned}
\int_{0}^{T}\|q\ast f(t,\cdot)\|^{q}_{L_{p}} dt
&\leq N \int_{0}^{T}(\|q_{0}(t,\cdot)\|_{L_{1}}+\|q_{1}(t,\cdot)\|_{L_{1}})^{q}\|f_{0}\|^{q}_{L_{p}}dt
\\
&\quad+ N \int_{0}^{T}\big(\sum_{j=1}^{\infty}\sum_{i=j-1}^{j+1}\|q_{i}(t,\cdot)\|_{L_{1}}\|f_{j}\|_{L_{p}}\big)^{q}dt.
\end{aligned}
\end{equation*}

\noindent
By \eqref{bound_of_p_j}
\begin{equation}\label{eqn 07.09.1}
\int_{0}^{T}(\|q_{0}(t,\cdot)\|_{L_{1}}+\|q_{1}(t,\cdot)\|_{L_{1}})^{q}\|f_{0}\|^{q}_{L_{p}}dt \leq N(T) \|f_{0}\|_{L_{p}}^{q},
\end{equation}
and
\begin{equation*}
\int_{0}^{T}\big(\sum_{j=1}^{\infty}\sum_{i=j-1}^{j+1}\|q_{i}(t,\cdot)\|_{L_{1}}\|f_{j}\|_{L_{p}}\big)^{q}dt
\leq N \int_{0}^{T}\big(\sum_{j=1}^{\infty}(  \phi(2^{2j})^{-\frac{1}{\alpha}}  t^{-1}\wedge 1)\|f_{j}\|_{L_{p}}\big)^{q}dt.
\end{equation*}
Observe that
\begin{equation*}
\begin{aligned}
\int_{0}^{T}&\big(\sum_{j=1}^{\infty}( \phi(2^{2j})^{-\frac{1}{\alpha}}  t^{-1}\wedge 1)\|f_{j}\|_{L_{p}}\big)^{q}dt
\\
&\leq 2^q\int_{0}^{T}\big(\sum_{j=1}^{\infty}1_{J}(t,j)\|f_{j}\|_{L_{p}}\big)^{q}dt +2^q\int_{0}^{T}\big(\sum_{j=1}^{\infty}1_{J^{c}}(t,j)  \phi(2^{2j})^{-\frac{1}{\alpha}}  t^{-1}\|f_{j}\|_{L_{p}}\big)^{q}dt,
\end{aligned}
\end{equation*}
where $J=\{(t,j)|  \phi(2^{2j})^{-\frac{1}{\alpha}}  t^{-1}\geq 1 \}$. By H\"older's inequality, 
\begin{equation*}
\begin{aligned}
\int_{0}^{T}\big(\sum_{j=1}^{\infty}1_{J}\|f_{j}\|_{L_{p}}\big)^{q}dt
&=\int_{0}^{T}\big(\sum_{j\in J(t)}  \phi(2^{2j})^{\frac{a}{\alpha}}    \phi(2^{2j})^{-\frac{a}{\alpha}}  \|f_{j}\|_{L_{p}}\big)^{q}dt
\\
&\leq \int_{0}^{T}\big(\sum_{j\in J(t)}  \phi(2^{2j})^{-\frac{aq'}{\alpha}}  \big)^{\frac{q}{q'}}\big(\sum_{j\in J(t)}  \phi(2^{2j})^{\frac{aq}{\alpha}}  \|f_{j}\|^{q}_{L_{p}}\big)dt,
\end{aligned}
\end{equation*}
where $a\in(-\frac{1}{q},0)$, $q'=\frac{q}{q-1}$, and $J(t)=\{j=1,2,\dots|(t,j)\in J\}$.

Fix $t>0$ and let $j_{0}(t)$ be the largest integer such that $\phi(2^{2j_{0}})^{-\frac{1}{\alpha}}t^{-1}\geq 1$. Then the above summation on $J(t)$ is the summation over $\{j\leq j_{0}\}$.  Moreover, by \eqref{phiratio} we have
\begin{equation}\label{eqn 06.15.14:27}
2^{-2}\leq \frac{\phi(2^{2j-2})}{\phi(2^{2j})}  \leq   c^{-1} 2^{-2\delta_{0}}.
\end{equation}
This yields that
\begin{equation*}\label{eqn 06.15.15:13}
\sum_{j\in J(t)} \phi(2^{2j})^{-\frac{aq'}{\alpha}}   \leq N(q,c,\delta_{0}) t^{aq'}.
\end{equation*}
Hence, we have
\begin{equation}\label{middle_of_Besov_bound_2_2}
\begin{aligned}
\int_{0}^{T}\big(\sum_{j=1}^{\infty}1_{J}\|f_{j}\|_{L_{p}}\big)^{q}dt
&\leq N\sum_{j=1}^{\infty} \int_{0}^{  \phi(2^{2j})^{-\frac{1}{\alpha}}  }t^{aq}  \phi(2^{2j})^{\frac{aq}{\alpha}}  \|f_{j}\|^{q}_{L_{p}}dt
\\
&\leq N \sum_{j=1}^{\infty}  \phi(2^{2j})^{-\frac{1}{\alpha}}  \|f_{j}\|^{q}_{L_{p}}.
\end{aligned}
\end{equation}
By H\"older's inequality again, for $b\in(-1,-\frac{1}{q})$ and $q'=\frac{q}{q-1}$,
\begin{equation*}
\begin{aligned}
\int_{0}^{T}&\big(\sum_{j=1}^{\infty}1_{J^{c}}  \phi(2^{2j})^{-\frac{1}{\alpha}} t^{-1}\|f_{j}\|_{L_{p}}\big)^{q}dt
\\
&=\int_{0}^{T}\big(\sum_{j\notin J(t)} \phi(2^{2j})^{-\frac{b}{\alpha}}  \phi(2^{2j})^{\frac{b}{\alpha}}  \phi(2^{2j})^{-\frac{1}{\alpha}}  t^{-1}\|f_{j}\|_{L_{p}}\big)^{q}dt
\\
&\leq \int_{0}^{T}t^{-q}\big(\sum_{j\notin J(t)} \phi(2^{2j})^{-\frac{(b+1)q'}{\alpha}} \big)^{\frac{q}{q'}}\big(\sum_{j\notin J(t)}{ \phi(2^{2j})^{\frac{bq}{\alpha}}  }\|f_{j}\|^{q}_{L_{p}}\big)dt.
\end{aligned}
\end{equation*}
The summation over $J(t)^{c}$ is the summation over $\{j>j_{0}\}$. Hence by \eqref{eqn 06.15.14:27} we have
\begin{equation*}
\sum_{j\notin J(t)} \phi(2^{2j})^{-\frac{(b+1)q'}{\alpha}}   \leq N(q) t^{(b+1)q'}.
\end{equation*}
Therefore, we have
\begin{equation}\label{middle_of_Besov_bound_2_3}
\begin{aligned}
\int_{0}^{T}\big(\sum_{j=1}^{\infty}1_{J^{c}}  \phi(2^{2j})^{-\frac{1}{\alpha}}  t^{-1}\|f_{j}\|_{L_{p}}\big)^{q}dt
&\leq N \sum_{j=1}^{\infty}\int_{ \phi(2^{2j})^{-\frac{1}{\alpha}} }^{\infty}t^{-q}t^{(b+1)q}{ \phi(2^{2j})^{\frac{bq}{\alpha}} }\|f_{j}\|^{q}_{L_{p}}dt
\\
&=N \sum_{j=1}^{\infty}\phi(2^{2j})^{-\frac{1}{\alpha}}  \|f_{j}\|^{q}_{L_{p}}.
\end{aligned}
\end{equation}
Combining \eqref{eqn 07.09.1}, \eqref{middle_of_Besov_bound_2_2} and \eqref{middle_of_Besov_bound_2_3}, we have \eqref{Besov_bound_of_q}. The theorem is proved.
\end{proof}

\begin{lemma}\label{thm 06.23.14:27}
Let $0<\alpha<1$, $1<q,p<\infty$, $\gamma\in\bR$ and $T<\infty$. Then, for any  $u_{0}\in B^{\phi,\gamma+2-2/\alpha q}_{p,q}$ equation  \eqref{eqn 06.15.16:05} has a solution $u\in\mathbb{H}^{\alpha,\phi,\gamma+2}_{q,p}(T)$ satisfying
\begin{equation}\label{eqn 06.23.11:02}
\|u\|_{\bH_{q,p}^{\alpha,\phi,\gamma+2}(T)}  \leq N \|u_{0}\|_{B^{\phi,\gamma+2-2/\alpha q}_{p,q}},
\end{equation}
where the constant $N$ depends only on $\alpha,d,p,q,\phi,\gamma$, and $T$.
\end{lemma}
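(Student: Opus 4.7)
The plan is to reduce to smooth initial data, use the representation $u(t,\cdot) = q(t,\cdot)\ast u_0$ from Lemma \ref{zero converge}, invoke the Besov bound of Theorem \ref{Besov_bound_2} after an isometric shift by $(1-\phi(\Delta))^{(\gamma+2)/2}$, and then pass to the limit using density of $\cS$ in $B^{\phi,\gamma+2-2/\alpha q}_{p,q}$. To begin, for any $u_0 \in \cS \subset C_p^\infty(\bR^d)$, Lemma \ref{zero converge} produces a classical solution $u(t,x) = (q(t,\cdot)\ast u_0)(x)$ in $C_p^{\alpha,\infty}([0,T]\times\bR^d)$ of equation \eqref{eqn 06.15.16:05}.

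To bound $\|u\|_{\bH_{q,p}^{\phi,\gamma+2}(T)}$, I note that at the Fourier level the multiplier $(1-\phi(\Delta))^{(\gamma+2)/2}$ commutes with convolution by $q(t,\cdot)$; hence setting $v_0 := (1-\phi(\Delta))^{(\gamma+2)/2} u_0$ gives $(1-\phi(\Delta))^{(\gamma+2)/2} u(t,\cdot) = q(t,\cdot)\ast v_0$. Applying Theorem \ref{Besov_bound_2} (after extending it by density from $C_c^\infty$ to the whole Besov space) together with the isometry from Remark \ref{rmk 06.23.11:13} (iii), I obtain
$$
\|u\|_{\bH_{q,p}^{\phi,\gamma+2}(T)}^{q} = \int_0^T \|q\ast v_0\|_{L_p}^q\, dt \leq N \|v_0\|_{B^{\phi,-2/\alpha q}_{p,q}}^q = N \|u_0\|_{B^{\phi,\gamma+2-2/\alpha q}_{p,q}}^q.
$$
Since $u$ is classical, $\partial_t^\alpha u = \phi(\Delta) u$; writing $\phi(\Delta) = 1 - (1-\phi(\Delta))$ yields $\|\phi(\Delta) w\|_{H_p^{\phi,\gamma}} \leq 2\|w\|_{H_p^{\phi,\gamma+2}}$ pointwise in $t$ (using Lemma \ref{H_p^phi,gamma space} (iii)), so $\|\partial_t^\alpha u\|_{\bH_{q,p}^{\phi,\gamma}(T)} \leq 2\|u\|_{\bH_{q,p}^{\phi,\gamma+2}(T)}$, and the combined bound $\|u\|_{\bH_{q,p}^{\alpha,\phi,\gamma+2}(T)} \leq N\|u_0\|_{B^{\phi,\gamma+2-2/\alpha q}_{p,q}}$ follows.

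For general $u_0 \in B^{\phi,\gamma+2-2/\alpha q}_{p,q}$, I pick $u_{0,n} \in \cS$ with $u_{0,n} \to u_0$ in that norm (available because the Besov space is defined as the closure of $\cS$), and let $u_n$ denote the corresponding classical solutions. By linearity and the a priori bound applied to $u_n - u_m$,
$$
\|u_n - u_m\|_{\bH_{q,p}^{\alpha,\phi,\gamma+2}(T)} \leq N \|u_{0,n} - u_{0,m}\|_{B^{\phi,\gamma+2-2/\alpha q}_{p,q}} \longrightarrow 0.
$$
Thus $\{u_n\}$ is a defining sequence in the sense of Definition \ref{defn defining}, its limit $u$ lies in $\bH_{q,p}^{\alpha,\phi,\gamma+2}(T)$ with $u(0,\cdot) = u_0$, and the estimate \eqref{eqn 06.23.11:02} follows by passing to the limit. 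That $u$ still satisfies the equation is inherited by taking limits in $\partial_t^\alpha u_n = \phi(\Delta) u_n$ inside $\bH_{q,p}^{\phi,\gamma}(T)$.

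The main obstacle is the a priori bound of the second paragraph: Theorem \ref{Besov_bound_2} is stated for $f \in C_c^\infty(\bR^d)$, whereas $v_0 = (1-\phi(\Delta))^{(\gamma+2)/2} u_0$ is typically neither compactly supported nor globally smooth (because $\phi$ need not be smooth at the origin), even when $u_0 \in \cS$. I plan to resolve this by observing that both sides of the inequality in Theorem \ref{Besov_bound_2} are continuous in $f$ with respect to the $B^{\phi,-2/\alpha q}_{p,q}$ norm, so the bound extends by density to the whole Besov space, to which $v_0$ belongs by Remark \ref{rmk 06.23.11:13} (iii); alternatively, one can mimic the Littlewood-Paley argument of Theorem \ref{Besov_bound_2} directly on $u_0$, using the $L^1$ estimate \eqref{bound_of_p_j} for $q_j(t,\cdot)$ together with the equivalence \eqref{eqn 06.25.13:31} of the two Besov norms.
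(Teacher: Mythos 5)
Your proof is correct and follows essentially the same route as the paper's: represent the solution as $q(t,\cdot)\ast u_0$ (Lemma \ref{zero converge}), control the $L_q(L_p)$ norm via Theorem \ref{Besov_bound_2}, use the isometry of $(1-\phi(\Delta))^{\nu/2}$ to handle general $\gamma$ (the paper does this as an upfront reduction to $\gamma=-2$, you do it inline), bound $\partial_t^\alpha u=\phi(\Delta)u$ by the spatial norm, and conclude for general $u_0$ by density of $\cS$ and a defining-sequence argument. Your explicit treatment of why Theorem \ref{Besov_bound_2} extends beyond $C_c^\infty$ data is a point the paper passes over silently, but it does not change the substance of the argument.
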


\begin{proof}
By Remark \ref{rmk 06.23.11:13}, and Lemma \ref{basicproperty} (iii),  it is enough to prove  the case $\gamma=-2$. 

If $u_{0}\in C^{\infty}_{p}(\R^{d})$, then we define
\begin{equation}\label{eqn 06.23.11:25}
u(t,x)=\int_{\R^{d}}q(t,x-y)u_{0}(y)dy.
\end{equation}
Then $u\in C^{\alpha,\infty}_{p}([0,T]\times\bR^{d})$ and all the claims of the lemma hold with $u$ due to 
 Lemma \ref{zero converge}(ii) and Lemma \ref{Besov_bound_2}.
 
  In general, for $u_{0}\in B^{\phi,-2/\alpha q}_{p,q}$ we take a sequence $u^n_0\in \cS$ such that $u^n_0\to u_0$ in $B^{\phi,-2/\alpha q}_{p,q}$, and  we define $u_{n}\in C^{\alpha,\infty}_{p}([0,T]\times\bR^{d})\cap \mathbb{H}^{\alpha,\phi,0}_{q,p}(T)$ corresponding to $u^n_0$ using  \eqref{eqn 06.23.11:25}.  Then inequality \eqref{eqn 06.23.11:02} applied to $u_n-u_m$ shows that $u_n$ is a Cauchy sequence in $\mathbb{H}^{\alpha,\phi,0}_{q,p}(T)$. Finally one gets the claims of the lemma by considering the limit. The lemma is proved.
\end{proof}

\begin{corollary}
   \label{cor 8.3.1}
   $u\in{\bH_{q,p}^{\alpha,\phi,\gamma+2}(T)}$ and $u(0,\cdot)=0$ if and only if  $u\in{\bH_{q,p,0}^{\alpha,\phi,\gamma+2}(T)}$. 
    \end{corollary}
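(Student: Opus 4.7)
The direction $(\Leftarrow)$ is essentially immediate from the definitions: if $u\in \bH_{q,p,0}^{\alpha,\phi,\gamma+2}(T)$, pick a defining sequence $u_n\in C^{\alpha,\infty}_p([0,T]\times\bR^d)$ with $u_n(0,\cdot)\equiv 0$; then $u_n(0,\cdot)$ trivially converges to $0$ in $B^{\phi,\gamma+2-2/\alpha q}_{p,q}$, so according to Definition \ref{defn defining}(ii) we have $u(0,\cdot)=0$. (Note that by Remark \ref{Hvaluedconti}(ii), this notion of trace is well defined regardless of the chosen defining sequence.)

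For the nontrivial direction $(\Rightarrow)$, I would argue by subtracting off the ``homogeneous part'' of a given defining sequence. Concretely, suppose $u\in \bH_{q,p}^{\alpha,\phi,\gamma+2}(T)$ and $u(0,\cdot)=0$. By Definition \ref{defn defining}(ii), there exists a defining sequence $\tilde{u}_n\in C^{\alpha,\infty}_p([0,T]\times\bR^d)$ for $u$ with $u_{0,n}:=\tilde{u}_n(0,\cdot)\in C_p^\infty(\bR^d)$ and $u_{0,n}\to 0$ in $B^{\phi,\gamma+2-2/\alpha q}_{p,q}$. For each $n$, define
\begin{equation*}
v_n(t,x):=\int_{\bR^d} q(t,x-y)\,u_{0,n}(y)\,dy.
\end{equation*}
By Lemma \ref{zero converge}(ii) we have $v_n\in C^{\alpha,\infty}_p([0,T]\times\bR^d)$ and $v_n(0,\cdot)=u_{0,n}$. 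Moreover, applying Lemma \ref{thm 06.23.14:27} to $v_n$ gives
\begin{equation*}
\|v_n\|_{\bH_{q,p}^{\alpha,\phi,\gamma+2}(T)}\leq N\|u_{0,n}\|_{B^{\phi,\gamma+2-2/\alpha q}_{p,q}}\longrightarrow 0.
\end{equation*}

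Now set $u_n:=\tilde{u}_n-v_n$. By construction $u_n\in C^{\alpha,\infty}_p([0,T]\times\bR^d)$ and $u_n(0,\cdot)=u_{0,n}-u_{0,n}=0$. Since $\tilde{u}_n$ is a defining sequence for $u$ and $v_n\to 0$ in $\bH_{q,p}^{\alpha,\phi,\gamma+2}(T)$, we obtain
\begin{equation*}
\|u_n-u\|_{\bH_{q,p}^{\phi,\gamma+2}(T)}\leq \|\tilde{u}_n-u\|_{\bH_{q,p}^{\phi,\gamma+2}(T)}+\|v_n\|_{\bH_{q,p}^{\phi,\gamma+2}(T)}\longrightarrow 0,
\end{equation*}
and similarly $\{\partial_t^\alpha u_n\}$ is Cauchy in $\bH_{q,p}^{\phi,\gamma}(T)$ because $\{\partial_t^\alpha\tilde{u}_n\}$ is Cauchy there and $\partial_t^\alpha v_n\to 0$ in the same space. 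Hence $\{u_n\}$ is a defining sequence for $u$ with $u_n(0,\cdot)\equiv 0$, which is exactly the condition in Definition \ref{defn defining}(iii). This shows $u\in \bH_{q,p,0}^{\alpha,\phi,\gamma+2}(T)$.

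The whole argument is structurally routine once Lemma \ref{thm 06.23.14:27} is available; there is no serious obstacle. The only point requiring a bit of care is that we must make sure $v_n$ is actually realized as an element of $C^{\alpha,\infty}_p([0,T]\times\bR^d)$ (not just as an abstract element of $\bH_{q,p}^{\alpha,\phi,\gamma+2}(T)$), so that $u_n=\tilde{u}_n-v_n$ is a legitimate defining sequence; this is guaranteed by Lemma \ref{zero converge}(ii) precisely because $u_{0,n}\in C_p^\infty(\bR^d)$.
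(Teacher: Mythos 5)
Your proposal is correct and follows essentially the same route as the paper: the paper also handles the ``if'' direction as immediate from the definitions, and for the ``only if'' direction it likewise subtracts off $v_n(t,x)=\int q(t,x-y)u_n(0,y)\,dy$, invoking Lemma \ref{zero converge} for membership in $C^{\alpha,\infty}_p([0,T]\times\bR^d)$ and Lemma \ref{thm 06.23.14:27} for the estimate $\|v_n\|_{\bH_{q,p}^{\alpha,\phi,\gamma+2}(T)}\leq N\|u_n(0,\cdot)\|_{B^{\phi,\gamma+2-2/\alpha q}_{p,q}}\to 0$. Your write-up just spells out the details more explicitly.
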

    \begin{proof}
    We only prove ``only if" part. The ``if"  part is obvious by definition.  Suppose  $u\in{\bH_{q,p}^{\alpha,\phi,\gamma+2}(T)}$ and $u(0,\cdot)=0$. Then there exists a defining sequence $u_n\in C^{\alpha,\infty}_{p}([0,T]\times \bR^d)$ of $u$ such that $u_n(0,\cdot)\in C^{\infty}_p$ and 
\begin{equation*}
u_n(0,\cdot) \to 0 \text{ in } B^{\phi,\gamma+2-2/\alpha q}_{p,q}.
\end{equation*}
By Lemmas \ref{zero converge} and  \ref{thm 06.23.14:27}, we can choose $v_n\in C^{\alpha,\infty}_{p}([0,T]\times \bR^d)$ such that $v_n(0,\cdot)=u_n(0,\cdot)$ and
$$
\|v_n\|_{{\bH_{q,p}^{\alpha,\phi,\gamma+2}}(T)} \leq N \|u_{n}(0,\cdot)\|_{B^{\phi,2-2/\alpha q}_{p,q}} \to 0
$$
as $n\to \infty$. This implies that $u_n-v_n$ is also a defining sequence of $u$, and therefore we  have  $u\in \bH_{q,p,0}^{\alpha,\phi,2}(T)$ because $(u_n-v_n)(0,\cdot)=0$.  
         \end{proof}

\mysection{Proof of Theorem \ref{main theorem}}

Due to Remark \ref{rmk 06.23.11:13} and Lemma \ref{basicproperty} (iii), we only need to prove  case $\gamma=0$.

\vspace{1mm}

\textbf{Step 1} (Uniqueness).  Let  $u\in \bH_{q,p}^{\alpha,\phi,2}(T)$ be a solution to equation \eqref{mainequation} with $f=0$ and $u_{0}=0$.  Then by Corollary \ref{cor 8.3.1},   $u\in \bH_{q,p,0}^{\alpha,\phi,2}(T)$.  Hence, by Lemma \ref{basicproperty} (ii), there exists ${u}_n\in C_c^\infty (\bR^{d+1}_+)$ such that ${u}_n \to u$ in $\bH_{q,p}^{\alpha,\phi,2}(T)$.  Due to Lemma \ref{u=qfsolution}, it also holds that 
\begin{equation}\label{eqn 03.25.14:49}
u_n(t,x)=\int_0^t \int_{\bR^d} q_{\alpha,1}(t-s,x-y)f_n(s,y) dyds,
\end{equation}
where $f_n:=\partial_t^\alpha u_n - \phi(\Delta) u_n$.  Note
\begin{eqnarray*}
\|f_n\|_{\bL_{q,p}(T)}&=&\|\partial_t^\alpha (u_n-u) - \phi(\Delta) (u_n-u)\|_{\bL_{q,p}(T)}\\
&\leq& \|\partial_t^\alpha u_n-\partial_t^\alpha u \|_{\bL_{q,p}(T)}+ \|\phi(\Delta) u_n- \phi(\Delta) u\|_{\bL_{q,p}(T)} \to 0
\end{eqnarray*}
as $n\to \infty$.  Thus  by Minkowski's inequality  we have
$$
\|u_{n}\|_{\bL_{q,p}(T)} \leq N(T) \|f_{n}\|_{\bL_{q,p}(T)}.
$$
Letting $n\to\infty$, we get $u=0$  since $\bH_{q,p}^{\alpha,\phi,2}(T) \subset \bL_{q,p}(T)$.

{\textbf{Step 2}} (Existence and estimate with zero initial condition). 
First assume $f\in C_c^\infty(\bR^{d+1}_+)$, and let 
\begin{equation}\label{eqn 07.06.16:36}
u(t,x):=\int_0^t \int_{\bR^d} q_{\alpha,1}(t-s,x-y)f(s,y) dyds.
\end{equation}

Using Remark \ref {Hvaluedconti} and the integrability of $q_{\alpha,1}$ one can easily check 
 $D^m_x u$, $\phi(\Delta)D^m_x u$ $\in C([0,T];L_p)$, and therefore $u\in C^{\alpha,\infty}_p([0,T]\times\bR^d)$. Also, by Lemma \ref{u=qfsolution},  $u$ satisfies equation \eqref{mainequation} with $u(0,\cdot)=0$. 
 
 Now we show \eqref{mainestimate}. By Minkowski's inequality and Corollary \ref{qintegrable} (i), one has
\begin{equation*}\label{eqn 03.25.15:06}
\|u\|_{\bL_{q,p}(T)} \leq N(T) \|f\|_{\bL_{q,p}(T)}.
\end{equation*}
Also the estimate
$$
\|\phi(\Delta)u\|_{\mathbb{L}_{q,p}(T)}\leq N \|f\|_{\mathbb{L}_{q,p}(T)}
$$
follows from Theorem \ref{qpestimate}. These two inequalities with  \eqref{eqn 03.25.15:03} lead to \eqref{mainestimate} and \eqref{mainestimate-11}.

For general $f$,  we take a sequence of functions   $f_{n}\in \Ccinf(\R^{d+1}_{+})$  such that $f_n \to f$ in $\bL_{q,p}(T)$. Let $u_{n}$  denote the solution to equation  \eqref{eqn 07.06.16:36} with $f_{n}$ in place of $f$. Then \eqref{mainestimate} applied to $u_m-u_n$ shows that $u_{n}$ is Cauchy in 
$\bH^{\alpha,\phi,2}_{q,p,0}(T)$. By taking $u$ as the limit of $u_{n}$ in $\bH^{\alpha,\phi,2}_{q,p,0}(T)$, we find that $u$ satisfies the equation $\partial_t^\alpha u = \phi(\Delta)u +f$, and  \eqref{mainestimate} and  \eqref{mainestimate-11} also hold for $u$.

\textbf{Step 3} (Existence and estimate with nonzero initial condition).
Let $v\in\mathbb{H}^{\alpha,\phi,\gamma+2}_{q,p}(T)$ denote the solution to the homogeneous equation taken from Lemma \ref{thm 06.23.14:27}, and let $u\in\mathbb{H}^{\alpha,\phi,\gamma+2}_{q,p,0}(T)$  be taken from Step 2. Then $\bar{u}:=v+u\in\mathbb{H}^{\alpha,\phi,\gamma+2}_{q,p}(T)$ satisfies \eqref{mainequation}, and \eqref{mainestimate}  also holds. The theorem is proved.

\end{document}